\newcommand{\aut}{\textnormal{Aut}}
\newcommand{\emb}{\textnormal{Emb}}
\newcommand{\mon}{\textnormal{End}}
\newcommand{\pol}{\textnormal{Pol}}
\newcommand{\supp}{\textnormal{supp}}
\newcommand{\interior}{\accentset{\circ}}
\newcommand{\varBA}{\mathrm{var}}
\providecommand*{\cupdot}{%
  \mathbin{%
    \mathpalette\@cupdot{}%
  }%
}
\newcommand*{\@cupdot}[2]{%
  \ooalign{%
    $\m@th#1\cup$\cr
    \sbox0{$#1\cup$}%
    \dimen@=\ht0 %
    \sbox0{$\m@th#1\cdot$}%
    \advance\dimen@ by -\ht0 %
    \dimen@=.5\dimen@
    \hidewidth\raise\dimen@\box0\hidewidth
  }%
}
\providecommand*{\bigcupdot}{%
  \mathop{%
    \vphantom{\bigcup}%
    \mathpalette\@bigcupdot{}%
  }%
}
\newcommand*{\@bigcupdot}[2]{%
  \ooalign{%
    $\m@th#1\bigcup$\cr
    \sbox0{$#1\bigcup$}%
    \dimen@=\ht0 %
    \advance\dimen@ by -\dp0 %
    \sbox0{\scalebox{2}{$\m@th#1\cdot$}}%
    \advance\dimen@ by -\ht0 %
    \dimen@=.5\dimen@
    \hidewidth\raise\dimen@\box0\hidewidth
  }%
}
\providecommand{\bigsqcap}{%
  \mathop{%
    \mathpalette\@updown\bigsqcup
  }%
}
\newcommand*{\@updown}[2]{%
  \rotatebox[origin=c]{180}{$\m@th#1#2$}%
}
\newtheorem{lemma}{Lemma}[section]
\newtheorem{theorem}[lemma]{Theorem}
\newtheorem{cor}[lemma]{Corollary}
\newtheorem{dfn}[lemma]{Definition}
\newtheorem{prop}[lemma]{Proposition}
\newtheorem{remark}[lemma]{Remark}
\begin{document}

\title{Automatic Homeomorphicity of Locally Moving Clones}
\author{Robert Barham\footnote{The author has received funding from the European Research Council under the European Community's Seventh Framework Programme (FP7/2007-2013 Grant Agreement no. 257039).} \\ Institut f\"ur Algebra, TU Dresden \\ robert.barham@yahoo.co.uk}

\maketitle

\begin{abstract}
We extend the work of M. Rubin on locally moving groups to clones, showing that a locally moving polymorphism clone has automatic homeomorphicity with respect to the class of all polymorphism clones.  We show that if $\pol(M,\mathcal{L})$ is a reduct of $(\mathbb{Q},<)$ or $(\mathbb{L},C)$ such that:
\begin{enumerate}[i.]
\item $\aut(M,\mathcal{L}) \not= \aut(M,=)$; and
\item $\mon(M,\mathcal{L}) = \emb(M,\mathcal{L})$,
\end{enumerate}
then $\pol(M,\mathcal{L})$ is locally moving (and hence has automatic homeomorphicity with respect to the class of all polymorphism clones), where $\mathbb{Q}$ is the rationals, $(\mathbb{L},C)$ is the infinite binary-branching homogeneous $C$-relation.  We also show that if $\mathcal{M}=(\mathbb{B}, \cup  , \cap, \,^c, 1, 0)$, the Fra\"{i}ss\`{e} Generic Boolean algebra, then $\pol(\mathcal{M})$ is locally moving.
\end{abstract}

\section{Introduction}

\paragraph{}
Automorphism groups, endomorphism monoids and polymorphism clones come with a natural topology attached, the topology of pointwise convergence (see 3.2 of \cite{BPPRecon}).  Recovering the topological structure of a polymorphism clone from its abstract structure is a problem that has recently attracted a lot of interest.  One reason for this is its applicability to theoretical computer science, specifically to constraint satisfaction problems (CSPs) with countably categorical templates.  These are computational problems which ask whether a given finite structure (the `input') homomorphically embeds into another, possibly infinite, structure (the `template').  If two countably categorical templates have homeomorphic polymorphism clones then their constraint satisfaction problems are polynomial-time equivalent, \cite{BodirskyPinskerTopBirkhoff}.  Therefore, if a polymorphism clone has `automatic homeomorphicity', i.e. every abstract isomorphism is already a homeomorphism, then the complexity of the corresponding CSP is determined by the equational theory of that clone.

The first results in this direction appears in a paper by Bodirsky, Pinsker and Pongr\'{a}cz, \cite{BPPRecon}.  In this paper, known results about the reconstruction of the topological structure of automorphism groups are extended to polymorphism clones using `gate coverings'.  One application of their method is to show that $\pol(V,E)$, the polymorphism clone of the random graph with the edge relation, has automatic homeomorphicity with respect to all closed subclones of $\pol(V,=)$.

Their paper, however, was unable to deal with the polymorphism clone of the rational numbers as a dense linear order, which is a particularly important template.  Pech and Pech in \cite{PechPechMonoid} extended the method in Bodirsky, Pinsker and Pongr\'{a}cz using `universal homogeneous endomorphisms' to show that $\mon(\mathbb{Q},\leq)$ has automatic homeomorphicity with respect to the polymorphism clones of countable partial orders, amongst other results concerning the rational Urysohn sphere and the generic partial order.  Behrisch, Truss and Vargas-Garc\'{i}a are preparing a paper \cite{BTVGQ} that shows that $\pol(Q,\leq)$ has automatic homeomorphicity with respect to all closed subclones of $\pol(Q,=)$.  Their result extends the method of Bodirsky, Pinsker and Pongr\'{a}cz using endomorphisms with a `maximally spread out image'.  The special endomorphisms of Pech and Pech and the special endomorphims of Behrisch, Truss and Vargas-Garc\'{i}a appear to be strongly related, and make an appearance here as `conjugable' endomorphisms.

In this paper, rather than adapt the method of Bodirsky, Pinsker and Pongr\'{a}cz to work in other circumstances, I exploit the `locally moving' method of Rubin for studying reconstruction from automorphism groups, which is discussed shortly.  This has three advantages, firstly the method of Bodirsky, Pinsker and Pongr\'{a}cz needs to already know that the automorphism group has automatic homeomorphicity, but here the method for showing that is included, so it can potentially be applied to structures where this was not previously known.  The second is that it establishes automatic homeomorphicity with respect to all polymorphism clones, not just polymorphism clones of countable structures.  The third, and most important, is that this can simultaneously produce reconstruction proofs for entire classes of clones that are important to the theory of CSPs.

The first section of this paper develops the notion of a locally moving clone, and shows that all locally moving clones have automatic homeomorphicity with respect to all polymorphism clones.  A clone must satisfy two conditions if it is to be locally moving:
\begin{enumerate}
\item the group of invertible elements must be a locally moving group; and
\item for every a unary function $f$ there is an `algebraically canonical' $g$ such that $gf$ is algebraically canonical'.  A full definition will be given shortly, where its close resemblance to conjugation will become apparent.  A consequence of this assumption is that every unary function in the clone must be injective.
\end{enumerate}

The first condition comes directly from the work of M. Rubin on `locally moving groups', a notion he created to study the reconstruction problem for the automorphism groups of Boolean algebras \cite{RubinHBA}.  He subsequently used this approach to study the reconstruction problem for topological spaces \cite{Rubin1989}, linear orders (with S. McCleary) \cite{RubinOrder} and trees \cite{Rubin91}.

His results on locally moving groups are essential to this paper, so I recommend to the reader \cite{RubinLMG}.  The results of Rubin's that I use are given without proof in this paper, but they can be found with proof in Section 2 of \cite{RubinLMG}.  When I have quoted his results, I have also given the theorem numbers from that work.

Rubin's work on groups makes heavy use of conjugation, so obviously we need something to replace that if we are to work with clones.  The second condition allow the use of an ersatz conjugation in the monoid.  Interestingly, no assumptions about the clone proper are needed, just the unary part.  This means that if we show that a particular polymorphism clone is locally moving then we have shown that every polymorphism clone with the same endomorphism monoid is locally moving.  This lets us prove that infinitely many clones have automatic homeomorphicity with a single proof.

After some general methods are established, we show that if $\pol(M,\mathcal{L})$ is a reduct of $(\mathbb{Q},<)$ or $(\mathbb{L},C)$ such that:
\begin{enumerate}[i.]
\item $\aut(M,\mathcal{L}) \not= \aut(M,=)$; and
\item $\mon(M,\mathcal{L}) = \emb(M,\mathcal{L})$,
\end{enumerate}
then $\pol(M,\mathcal{L})$ is locally moving.  In this list, $(\mathbb{Q},<)$ is the rationals with a linear order, $(\mathbb{L},C)$ is the infinite binary-branching homogeneous $C$-relation.  We also examine the Fra\"{i}ss\'{e} generic Boolean algebra.

I am indebted to Truss and Vargas-Garcia for sharing with me their work on automatic homeomorphicity for $\mon(\mathbb{Q},<)$, $\mon(\mathbb{Q},\leq)$, and $\pol(\mathbb{Q},\leq)$.  Their notion of endomorphisms whose image is as spread out as possible was very useful.  I am also grateful to Trung Van Pham, for producing extremely illuminating counter-examples to earlier drafts, and Manuel Bodirsky, for his advice, questions and comments.

\section{Locally Moving Clones}\label{section:locallymoving}

\subsection{Groups}

Let $\mathcal{L}_{Gp} = \lbrace \circ , \mathrm{id}, \;^{-1} \rbrace$ be the language of groups.

\begin{dfn}  Let $B$ be a complete atomless Boolean algebra, let $x \in B$ and let $\alpha \in \aut(B)$.
\begin{enumerate}
\item $\mathrm{Fix}(\alpha):=\lbrace y \in B \: : \: \forall x \subseteq y \: \alpha(x) = x \rbrace$.
\item $\mathrm{fix}(\alpha):= \bigcup\; \mathrm{Fix}(\alpha)$.
\item $\varBA(\alpha) :=  -\mathrm{fix}(\alpha)$.
\item If $G \leq \aut(B)$ then $\mathrm{Var}(B,G) := \lbrace \varBA(\beta) \: : \: \beta \in G \rbrace$.
\end{enumerate}
\end{dfn}

\begin{dfn}
A group $G$ is said to be \emph{locally moving} if there is a complete atomless Boolean algebra $B$ such that $G \leq \aut(B)$ and for all $b \in B$ there is an $\alpha \in G$ such that $\varBA(\alpha) \leq b$.
\end{dfn}

\begin{lemma}[(Rubin, Proposition 2.3 of \cite{RubinLMG})]
$\varBA(\alpha)= \bigcup \lbrace a \in B \: : \: a \cap \alpha(a) = 0 \rbrace$
\end{lemma}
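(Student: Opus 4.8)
The plan is to prove the two inclusions between $\varBA(\alpha)$ and $D := \bigcup \lbrace a \in B : a \cap \alpha(a) = 0\rbrace$ separately, after first recording one structural fact about $\mathrm{Fix}(\alpha)$ that does the real work.

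\emph{Preliminary observation.} Since $\alpha$ is an automorphism of the \emph{complete} Boolean algebra $B$, it is an order-automorphism and hence preserves arbitrary joins. Using this I would check that $\mathrm{Fix}(\alpha)$ is closed downwards and closed under arbitrary joins: if $y_i \in \mathrm{Fix}(\alpha)$ for $i \in I$ and $x \subseteq \bigcup_i y_i$, then $x = \bigcup_i (x \cap y_i)$ with each $x \cap y_i \subseteq y_i$ fixed, so $\alpha(x) = \bigcup_i \alpha(x \cap y_i) = \bigcup_i (x \cap y_i) = x$. Hence $\fixBA(\alpha) = \bigcup \mathrm{Fix}(\alpha)$ is itself the largest element of $\mathrm{Fix}(\alpha)$; in particular $b \subseteq \fixBA(\alpha)$ implies $\alpha(b) = b$.

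\emph{The inclusion $D \subseteq \varBA(\alpha)$.} It suffices to show that every $a$ with $a \cap \alpha(a) = 0$ satisfies $a \cap \fixBA(\alpha) = 0$. Put $b := a \cap \fixBA(\alpha)$. Then $b \subseteq \fixBA(\alpha)$ gives $\alpha(b) = b$, while $b \subseteq a$ gives $\alpha(b) \subseteq \alpha(a)$, so $b = \alpha(b) \subseteq a \cap \alpha(a) = 0$.

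\emph{The inclusion $\varBA(\alpha) \subseteq D$.} This is equivalent to $-D \subseteq \fixBA(\alpha)$, and by the preliminary observation it is enough to prove $-D \in \mathrm{Fix}(\alpha)$, i.e.\ that $\alpha(x) = x$ for every $x \subseteq -D$. Suppose not, say $x \subseteq -D$ with $\alpha(x) \neq x$. If $x \not\subseteq \alpha(x)$, set $c := x \cap (-\alpha(x)) \neq 0$; then $c \subseteq x$ forces $\alpha(c) \subseteq \alpha(x)$, so $c \cap \alpha(c) \subseteq c \cap \alpha(x) = 0$, whence $c \subseteq D$, contradicting $c \subseteq x \subseteq -D$. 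If instead $\alpha(x) \not\subseteq x$, I would either take $c := \alpha^{-1}(\alpha(x) \cap (-x)) \neq 0$ directly (it lies below $x$ and satisfies $c \cap \alpha(c) = 0$), or simply note that $D$ and $\fixBA$ are unchanged when $\alpha$ is replaced by $\alpha^{-1}$ (since $a \cap \alpha(a) = 0$ iff $\alpha^{-1}(a) \cap a = 0$) and rerun the first case for $\alpha^{-1}$. Combining the two inclusions gives $\varBA(\alpha) = D$.

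The only genuine subtlety is the second inclusion: one must see that a failure of $\alpha(x) = x$ already yields a \emph{single} nonzero piece $c \leq x$ with $c \cap \alpha(c) = 0$, and one must handle the asymmetry between $x$ and $\alpha(x)$ (most cleanly by passing to $\alpha^{-1}$). The supporting fact that $\fixBA(\alpha)$ is the maximum of $\mathrm{Fix}(\alpha)$—the one place where completeness of $B$ enters—is the other ingredient; everything else is routine Boolean algebra.
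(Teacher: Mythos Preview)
Your argument is correct. The preliminary observation that $\fixBA(\alpha)$ is itself an element of $\mathrm{Fix}(\alpha)$ (using completeness of $B$ and that automorphisms preserve arbitrary joins) is exactly what is needed, and both inclusions are handled cleanly. The treatment of the second case in the inclusion $\varBA(\alpha)\subseteq D$ via the symmetry $D_\alpha=D_{\alpha^{-1}}$ is the standard way to avoid repeating the computation.

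As for comparison: the paper does not actually prove this lemma. It is quoted verbatim as Proposition~2.3 of Rubin's manuscript \cite{RubinLMG} and left without proof, in line with the author's explicit remark that ``the results of Rubin's that I use are given without proof in this paper.'' So there is no in-paper argument to compare against; your proof simply fills in what the paper outsources to the reference, and it does so along the lines one would expect Rubin's original to follow.
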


\begin{theorem}[(Rubin's Expressibility Theorem, Theorem 2.5 of \cite{RubinLMG})]
Let $G$ be any group acting on any Boolean algebra $B$.  There are $\mathcal{L}_{Gp}$-formulas $\phi_{Eq}$ $\phi_{\leq}$ and $\phi_{Ap}$ such that
\[
\begin{array}{r c l}
G \models \phi_{Eq}(\alpha,\beta) & \Leftrightarrow & \varBA(\alpha) = \varBA(\beta) \\
G \models \phi_{\subseteq}(\alpha,\beta) & \Leftrightarrow & \varBA(\alpha) \subseteq \varBA(\beta) \\
G \models \phi_{Ap}(\alpha,\beta,\gamma) & \Leftrightarrow & \alpha(\varBA(\beta)) = \varBA(\gamma) \\
\end{array}
\]
\end{theorem}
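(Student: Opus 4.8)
My plan is to reduce the three formulas to a single one and then to an auxiliary disjointness predicate. Since $\fixBA(\gamma\beta\gamma^{-1}) = \gamma(\fixBA(\beta))$ for every $\gamma \in G$, we get $\varBA(\gamma\beta\gamma^{-1}) = \gamma(\varBA(\beta))$, so I would take $\phi_{Ap}(\alpha,\beta,\gamma)$ to be $\phi_{Eq}(\alpha\beta\alpha^{-1},\gamma)$ and $\phi_{Eq}(\alpha,\beta)$ to be $\phi_\subseteq(\alpha,\beta) \wedge \phi_\subseteq(\beta,\alpha)$. The whole problem then comes down to producing one $\mathcal L_{Gp}$-formula $\phi_\subseteq$ with $G \models \phi_\subseteq(\alpha,\beta) \iff \varBA(\alpha) \subseteq \varBA(\beta)$.

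For $\phi_\subseteq$ I would introduce an auxiliary \emph{disjointness} predicate $\disj(\alpha,\beta)$, meant to express $\varBA(\alpha) \cap \varBA(\beta) = 0$, and use that $\varBA(\alpha) \subseteq \varBA(\beta)$ is equivalent to $\fixBA(\beta) \subseteq \fixBA(\alpha)$, i.e.\ to ``every variable region disjoint from $\varBA(\beta)$ is disjoint from $\varBA(\alpha)$''. So I would set $\phi_\subseteq(\alpha,\beta) := \forall\gamma\,\big(\disj(\gamma,\beta) \to \disj(\gamma,\alpha)\big)$. Left-to-right is immediate. For the converse: if $\varBA(\alpha)\not\subseteq\varBA(\beta)$ then $b := \varBA(\alpha)\cap\fixBA(\beta)\neq 0$; since $b\le\varBA(\alpha)$, the Lemma above (Rubin's Prop.\ 2.3) lets me shrink $b$ to a nonzero $c$ with $c\cap\alpha(c)=0$, and then — using that below every nonzero element of $B$ there is a nontrivial group element with variable region beneath it, which is the defining feature of a locally moving group — I pick $\gamma\neq\identity$ with $\varBA(\gamma)\le c$. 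Then $\varBA(\gamma)\le c\le\fixBA(\beta)$ gives $\disj(\gamma,\beta)$, while $0\neq\varBA(\gamma)\le\varBA(\alpha)$ kills $\disj(\gamma,\alpha)$, so $\phi_\subseteq(\alpha,\beta)$ fails as required.

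The real work, and the step I expect to be the main obstacle, is expressing $\disj$ using only products and inverses. The easy half is clean: if $\varBA(\alpha)\cap\varBA(\beta)=0$ then $\alpha$ and $\beta$ act on complementary regions, so $\alpha$ commutes with $\beta$, and indeed with every conjugate $\gamma\beta\gamma^{-1}$ by a $\gamma$ centralising $\alpha$ — such a $\gamma$ satisfies $\gamma(\varBA(\alpha))=\varBA(\alpha)$, hence fixes $\fixBA(\alpha)\supseteq\varBA(\beta)$ setwise, so $\varBA(\gamma\beta\gamma^{-1})=\gamma(\varBA(\beta))$ is still disjoint from $\varBA(\alpha)$. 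The hard half runs through Rubin's Prop.\ 2.3 again: if $\varBA(\alpha)\cap\varBA(\beta)\neq 0$ then that meet contains a nonzero $c$ with $c\cap\alpha(c)=0$, and a nontrivial $\delta$ with $\varBA(\delta)\le c$ then has $\varBA(\alpha\delta\alpha^{-1})=\alpha(\varBA(\delta))$ disjoint from $\varBA(\delta)\neq 0$, so $\alpha\delta\alpha^{-1}\neq\delta$; the quantified commutator sentence defining $\disj$ must be arranged so that precisely this kind of mismatch is forced, while \emph{every} instantiation of its quantifiers is still accounted for by the easy half. Writing that sentence down and checking both directions simultaneously is the technical heart; the reductions to $\phi_\subseteq$, $\phi_{Eq}$, $\phi_{Ap}$ and the passage from $\disj$ to $\phi_\subseteq$ are then just bookkeeping inside the Boolean algebra.
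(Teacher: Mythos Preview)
The paper does not prove this theorem at all; it is quoted from Rubin and the reader is sent to \cite{RubinLMG} for the argument, so there is no ``paper's own proof'' to compare your attempt against. That said, your outline is precisely Rubin's strategy: reduce $\phi_{Ap}$ to $\phi_{Eq}$ via $\varBA(\alpha\beta\alpha^{-1})=\alpha(\varBA(\beta))$ (this is exactly the observation recorded in Remark~\ref{remark:neededdetail}), reduce $\phi_{Eq}$ to $\phi_\subseteq$, reduce $\phi_\subseteq$ to a first-order disjointness predicate, and express disjointness through commutation relations.

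Two points deserve mention. First, your argument explicitly invokes the locally moving hypothesis --- to find a nontrivial $\gamma$ with $\varBA(\gamma)$ below a prescribed nonzero element --- so what you are proving is the expressibility theorem for locally moving groups, not for ``any group acting on any Boolean algebra'' as the statement is literally phrased. That is also what Rubin proves; the wording here is loose, and the theorem is only ever applied in the locally moving setting. Second, you candidly stop before writing down the actual commutator sentence for $\disj$ and checking both directions against it. You have identified the correct ingredients (Proposition~2.3 of \cite{RubinLMG}, conjugation by centraliser elements, the easy-half computation that disjoint supports force commutation with all such conjugates), but as written this is a plan rather than a proof: producing the specific formula and carrying out the bidirectional verification is exactly the content of Rubin's Theorem~2.5, and a complete argument would need it spelled out.
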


\begin{theorem}[(Rubin, Corollary 2.12 of \cite{RubinLMG})]
There is a sentence $\sigma_{LM}$ in $\mathcal{L}_{Gp}$ such that $G \models \sigma_{LM}$ if and only if $G$ is a locally moving group.
\end{theorem}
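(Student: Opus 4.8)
The plan is to recover, inside $G$ by first-order means, all the data witnessing that $G$ is locally moving, using Rubin's Expressibility Theorem, and then to collect the remaining requirements into a single sentence. First I would build inside an arbitrary group $G$ a definable poset carrying a definable $G$-action: let $\sim$ be the relation defined by $\phi_{Eq}$, and on $\{\alpha\in G : \neg\phi_{Eq}(\alpha,\identity)\}$ modulo $\sim$ put the order induced by $\phi_{\subseteq}$; call the resulting structure $\mathbf{P}^{+}(G)$, with $G$ acting by $\alpha\cdot[\beta]:=[\alpha\beta\alpha^{-1}]$, whose graph is $\phi_{Ap}$. When $G\leq\aut(B)$ is locally moving, the Expressibility Theorem says precisely that $[\beta]\mapsto\varBA(\beta)$ is an isomorphism of $\mathbf{P}^{+}(G)$ onto $(\mathrm{Var}(B,G)\setminus\{0\},\subseteq)$ intertwining the $G$-action with the restriction to $\mathrm{Var}(B,G)$ of the action on $B$; the class of $\identity$ corresponds to $0$ and is defined by $\phi_{Eq}(\cdot,\identity)$. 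So the ``Boolean skeleton'' of a locally moving group and its action are first-order recoverable.

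Next I would read off the first-order properties this skeleton must have. Using density of $\mathrm{Var}(B,G)$ in $B$ one checks that $\mathbf{P}^{+}(G)$ is an atomless, separative poset, that the $G$-action on it is faithful and by order-automorphisms, and --- the crucial clause --- that for all $\gamma,\alpha$, $\phi_{\subseteq}(\gamma,\alpha)$ holds if and only if $\{[\epsilon] : [\epsilon]\text{ is incompatible with }\alpha\cdot[\epsilon]\}$ is dense below $[\gamma]$ (i.e.\ every nonzero $[\delta]\leq[\gamma]$ dominates some nonzero such $[\epsilon]$). That last equivalence is exactly Rubin's identity $\varBA(\alpha)=\bigcup\{a : a\cap\alpha(a)=0\}$, rewritten to quantify only over the dense subposet. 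All of these are first-order conditions on $G$, since they are built from $\phi_{Eq},\phi_{\subseteq},\phi_{Ap}$; define $\sigma_{LM}$ to be their conjunction, prefaced by the routine clauses that $\phi_{Eq}$ is an equivalence relation compatible with $\phi_{\subseteq}$ and that $\phi_{Ap}$ is the graph of a total $G$-action. By the previous paragraph, every locally moving group satisfies $\sigma_{LM}$.

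For the converse, suppose $G\models\sigma_{LM}$. Then $\mathbf{P}^{+}(G)$ is a genuine atomless separative poset; let $B$ be its Dedekind--MacNeille (equivalently, regular-open) completion, a complete atomless Boolean algebra, and let $\iota:\mathbf{P}^{+}(G)\hookrightarrow B$ be the canonical dense embedding. The faithful $G$-action on $\mathbf{P}^{+}(G)$ by order-automorphisms extends uniquely to a faithful action on $B$ by Boolean automorphisms, so $G\leq\aut(B)$. Applying the Expressibility Theorem to this action gives $\phi_{\subseteq}(\gamma,\alpha)\iff\varBA_{B}(\gamma)\subseteq\varBA_{B}(\alpha)$, while the crucial clause of $\sigma_{LM}$ together with Rubin's identity (tested on the dense set $\iota(\mathbf{P}^{+}(G))$) forces $\varBA_{B}(\alpha)=\iota([\alpha])$ for every $\alpha\in G$. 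Hence $\mathrm{Var}(B,G)\setminus\{0\}=\iota(\mathbf{P}^{+}(G))$ is dense in $B$, so for every $b\in B$ some $\alpha\in G$ has $\varBA_{B}(\alpha)\leq b$; that is, $G$ is locally moving.

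The delicate part, with everything else being bookkeeping with the Expressibility Theorem, is the crucial clause: one must express ``$\varBA(\alpha)$ is the join of the pieces $\alpha$ throws off themselves'' purely inside $\mathbf{P}^{+}(G)$, and then argue that this single first-order statement suffices to pin down $\varBA_{B}$ \emph{on the nose} after completion --- not merely up to an automorphism of $B$. Verifying that clause, and that the completion interacts correctly with the $G$-action (faithfulness, action by automorphisms), is where I expect essentially all of the effort to go.
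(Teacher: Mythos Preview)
The paper does not prove this theorem: it is quoted verbatim as Corollary~2.12 of Rubin's monograph and, as the introduction explicitly says, Rubin's results are ``given without proof in this paper''. So there is no in-paper proof to compare your proposal against.

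That said, your outline is precisely the strategy Rubin himself uses: interpret a poset inside $G$ via the Expressibility formulas, axiomatise first-order the properties that the image of $\varBA$ must have (atomless, separative, faithfully $G$-acted-on, together with your ``crucial clause'' encoding $\varBA(\alpha)=\bigcup\{a:a\cap\alpha(a)=0\}$), and for the converse take the Boolean completion of the definable poset and check that $\varBA$ computed in the completion agrees with the canonical embedding. One point deserves care. In the converse direction you write ``applying the Expressibility Theorem to this action gives $\phi_{\subseteq}(\gamma,\alpha)\Leftrightarrow\varBA_{B}(\gamma)\subseteq\varBA_{B}(\alpha)$''. Despite the paper's loose phrasing (``any group acting on any Boolean algebra''), Rubin's Theorem~2.5 is proved under the local-movement hypothesis, so invoking it here risks circularity. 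Fortunately that appeal is redundant: your crucial clause together with Proposition~2.3 (which holds for an arbitrary automorphism of a complete Boolean algebra) already yields $\varBA_{B}(\alpha)=\iota([\alpha])$ directly, since every $[\epsilon]$ incompatible with $\alpha\cdot[\epsilon]$ lies below $[\alpha]$ by the clause, and the set of such $[\epsilon]$ is dense below $[\alpha]$. Once that step is streamlined, the argument is sound and matches Rubin's.
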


\begin{theorem}[(Rubin's Reconstruction Theorem for Local Movement Systems, Corollary 2.10 of \cite{RubinLMG})]\label{RubinInduced}
Let $G$ and $H$ be locally moving.  Let $B$ and $C$ be Boolean algebras that witness this, and let $\theta: G \rightarrow H$ be an isomorphism.  Then there is an isomorphism $\tau: B \rightarrow C$ such that
\[\theta(\alpha) = \tau \alpha \tau^{-1} \]
\end{theorem}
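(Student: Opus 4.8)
\medskip
\noindent\textbf{Proof proposal.}
The plan is to recover the witnessing Boolean algebra $B$ from the abstract group $G$, up to isomorphism, as the completion of a uniformly interpretable partial order, and then to read off the conjugation formula from the conjugation-equivariance of $\varBA$. First I would use Rubin's Expressibility Theorem to interpret, uniformly in $G$, a two-sorted structure $\mathcal{B}(G)$ whose first sort is $G$ itself and whose second sort is the quotient of $G$ by the definable equivalence relation $\phi_{Eq}$, ordered by $\phi_{\subseteq}$; this second sort is a copy of the poset $(\mathrm{Var}(B,G),\subseteq)$. Because $\mathrm{fix}(\alpha\beta\alpha^{-1}) = \alpha(\mathrm{fix}(\beta))$, and hence $\varBA(\alpha\beta\alpha^{-1}) = \alpha(\varBA(\beta))$, the set $\mathrm{Var}(B,G)$ is invariant under the natural $G$-action on $B$, and that action $G\times\mathrm{Var}(B,G)\to\mathrm{Var}(B,G)$ is definable: it is precisely the ternary relation $\phi_{Ap}$. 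Thus $G$ interprets $\mathcal{B}(G)=(G,\ \mathrm{Var}(B,G),\ \subseteq,\ \text{action})$, and an abstract isomorphism $\theta\colon G\to H$ transports it. Since $\phi_{Eq}$ and $\phi_{\subseteq}$ are first order, $\theta$ induces a well-defined poset isomorphism $\bar\theta\colon(\mathrm{Var}(B,G),\subseteq)\to(\mathrm{Var}(C,H),\subseteq)$ with $\bar\theta(\varBA(\beta))=\varBA(\theta(\beta))$, and it is compatible with the two actions in the sense that $\bar\theta(\alpha(\varBA(\beta)))=\theta(\alpha)(\varBA(\theta(\beta)))$ for all $\alpha,\beta\in G$.

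Next I would bring in the hypothesis that $B$ witnesses local movement, which says exactly that $\mathrm{Var}(B,G)$ (equivalently $\{\varBA(\alpha):\mathrm{id}\neq\alpha\in G\}$, using that $\varBA(\alpha)=0$ only for $\alpha=\mathrm{id}$ since $G\leq\aut(B)$ acts faithfully) is order-dense in $B$; likewise $\mathrm{Var}(C,H)$ is order-dense in $C$. It is a standard fact of Boolean-algebra theory that a complete Boolean algebra is determined up to isomorphism by any of its dense subposets — it is the regular-open (Dedekind--MacNeille) completion of that subposet. Applying this to $B\supseteq\mathrm{Var}(B,G)$ and $C\supseteq\mathrm{Var}(C,H)$, the poset isomorphism $\bar\theta$ extends uniquely to an order isomorphism $\tau\colon B\to C$, which is then automatically an isomorphism of Boolean algebras.

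It remains to verify $\theta(\alpha)=\tau\alpha\tau^{-1}$ as elements of $\aut(C)$. Both are automorphisms of the complete Boolean algebra $C$ and so preserve arbitrary joins; since $\mathrm{Var}(C,H)$ is dense in $C$, two such automorphisms coincide as soon as they agree on $\mathrm{Var}(C,H)$. Fix $\gamma\in H$ and choose $\beta\in G$ with $\theta(\beta)=\gamma$, so that $\tau^{-1}(\varBA(\gamma))=\varBA(\beta)$ since $\tau$ restricts to $\bar\theta$. Then, using $\alpha(\varBA(\beta))=\varBA(\alpha\beta\alpha^{-1})$, the compatibility of $\tau$ with the two actions, and that $\theta$ is a group homomorphism,
\[
\tau\alpha\tau^{-1}(\varBA(\gamma)) \;=\; \tau\big(\varBA(\alpha\beta\alpha^{-1})\big) \;=\; \varBA\big(\theta(\alpha\beta\alpha^{-1})\big) \;=\; \varBA\big(\theta(\alpha)\,\gamma\,\theta(\alpha)^{-1}\big) \;=\; \theta(\alpha)(\varBA(\gamma)).
\]
As $\gamma$ was arbitrary, $\theta(\alpha)$ and $\tau\alpha\tau^{-1}$ agree on the dense set $\mathrm{Var}(C,H)$, hence everywhere on $C$.

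The genuine content is carried by the Expressibility Theorem quoted above, whose proof encodes the Boolean order and the $G$-action into the pure group language; granting it, the remaining nontrivial ingredient is the order-theoretic fact that a complete Boolean algebra is the completion of any of its dense subposets, and the main bookkeeping obstacle is ensuring that the extension $\tau$ stays compatible with the conjugation action — which is exactly what forces it to realise $\theta$ by conjugation, rather than merely being some abstract isomorphism $B\to C$.
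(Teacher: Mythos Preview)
Your proposal is correct and follows the standard strategy. The paper itself does not prove this theorem --- it is quoted without proof from Rubin's \cite{RubinLMG} --- but the paper does prove the clone analogue (Proposition~\ref{thm:cloneinduced}) by exactly the argument you give: the Expressibility Theorem makes $\mathrm{Var}$ and its order and action definable, the isomorphism $\theta$ therefore induces an order isomorphism of the $\mathrm{Var}$ posets, density (from local movement) lets one extend to the completions, and definability of the action forces the conjugation formula. So your sketch is essentially the paper's own argument, transplanted back to the group setting it was originally written for.
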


\begin{remark}\label{remark:neededdetail}
$\alpha(\varBA(\beta))=\varBA(\beta^\alpha)$.  Rubin exploits this fact to construct $\phi_{Ap}$.  However we cannot conjugate by non-invertible elements, so it is not straightforward to extend this to clones.
\end{remark}

\subsection{Clones}

Let $\Delta$ be a clone.  Let $\Delta_n$ be the functions of arity $n$.  We denote the language of clones by $\mathcal{L}_{Cl}= \lbrace \circ_{m,n}, \pi^n_m \: : \: n,m \in \mathbb{N} \rbrace$, where $\circ_{n,m}: \Delta_n \times (\Delta_m)^n \rightarrow \Delta_m $ is composition, and $\pi^n_i$ is the projection from an $n$-tuple to the $i^{\mathrm{th}}$-coordinate.

\begin{dfn}
$\phi_{\mathrm{Aut}}(f):=\exists g \: (fg=\mathrm{id}=gf)$.  Note that $\phi_{\mathrm{Aut}}(\Delta)$ is the group of invertibles.
\end{dfn}

The full group structure is definable on $\phi_{\mathrm{Aut}}(\Delta)$ using $\mathcal{L}_{Cl}$.  If we add the requirement that every variable satisfies $\phi_\mathrm{Aut}$ to $\phi_{Eq}$, $\phi_{\subseteq}$ and $\phi_{Ap}$ then Rubin's Expressibility Theorem still holds for clones.  From now on, suppose that $\phi_\aut(\Delta)$ is a locally moving group.

\begin{dfn}
Let $\zeta $ be the following formula for $f \in \Delta_1$:
\[\zeta(f):= \forall \alpha \in \phi_\aut(\Delta) \: \exists \beta,\gamma \in \phi_\aut(\Delta) \: ( \phi_{Eq}(\alpha,\gamma) \wedge f \gamma = \beta f )\]
If $\Delta \models \zeta(f)$ then we call $f$ \emph{algebraicially canonical}.
\end{dfn}

\begin{lemma}\label{Lemma:IHopeThisWorks}
Suppose $f \in \Delta_1$.  Let $\alpha, \beta \in \phi_{\mathrm{Aut}}(\Delta)$ be such that $f\alpha = \beta f$.  Then $\varBA(\beta)=f(\varBA(\alpha))$.
\end{lemma}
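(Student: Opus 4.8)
The plan is to establish the equality by proving the two inclusions separately, in each case replacing $\varBA$ by its description from Rubin's Proposition 2.3 of \cite{RubinLMG}, namely $\varBA(\gamma) = \bigcup\{c \in B : c \cap \gamma(c) = 0\}$, and using two features of the situation: the injectivity of the unary members of $\Delta$, and the intertwining identity $f\alpha = \beta f$, which gives $\beta(f(c)) = f(\alpha(c))$ for every $c$.

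First I would prove $f(\varBA(\alpha)) \subseteq \varBA(\beta)$. Let $a \in B$ be a witness for $\varBA(\alpha)$, i.e. $a \cap \alpha(a) = 0$. Then
\[ f(a) \cap \beta(f(a)) \;=\; f(a) \cap f(\alpha(a)) \;=\; f\bigl(a \cap \alpha(a)\bigr) \;=\; f(0) \;=\; 0, \]
where the second equality is the only place injectivity of $f$ enters. Hence $f(a) \subseteq \varBA(\beta)$, and since $f$ preserves the suprema occurring in the formula of Proposition 2.3 we obtain $f(\varBA(\alpha)) = \bigcup\{f(a) : a \cap \alpha(a) = 0\} \subseteq \varBA(\beta)$.

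For the reverse inclusion $\varBA(\beta) \subseteq f(\varBA(\alpha))$ I would first record the observation that, since $B$ is complete, below any non-zero $b \le \varBA(\beta)$ there is a non-zero witness: writing $\varBA(\beta)$ as the supremum of the $c$ with $c \cap \beta(c) = 0$ gives $b = \bigcup_c (b \wedge c)$, so some $b \wedge c \neq 0$, and $b \wedge c$ is again a witness because $(b \wedge c) \cap \beta(b \wedge c) \le c \cap \beta(c) = 0$. Thus it suffices to show that every witness $b$ for $\varBA(\beta)$ lies under $f(\varBA(\alpha))$. Such a $b$ lies in the image of $f$ on $B$, so $b = f(a)$ for $a := f^{-1}(b)$; then $0 = b \cap \beta(b) = f(a) \cap f(\alpha(a)) = f(a \cap \alpha(a))$, and injectivity forces $a \cap \alpha(a) = 0$, i.e.\ $a \le \varBA(\alpha)$, whence $b = f(a) \le f(\varBA(\alpha))$. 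Passing to suprema over all witnesses completes the inclusion, and hence the lemma.

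The two computations above are routine applications of Proposition 2.3, injectivity and completeness of $B$; the one genuinely delicate point is hidden in the reverse inclusion, namely the assertion that an arbitrary witness $b \le \varBA(\beta)$ belongs to the image of $f$ acting on $B$. This is exactly where it matters that $f$ is a unary member of the clone acting on the particular Boolean algebra $B$ that witnesses local movement: although $f$ is not invertible and one cannot conjugate by it, its action on $B$ is still onto (equivalently, the $B$-element $f(1)$ is $1$, so no part of $\varBA(\beta)$ can escape into the complement of the image of $f$). I would therefore arrange the argument so that this surjectivity is the single ingredient invoked beyond Proposition 2.3, injectivity and completeness of $B$; verifying it for the clones under consideration is the main obstacle and the step to handle with care.
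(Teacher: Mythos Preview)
Your argument for $f(\varBA(\alpha)) \subseteq \varBA(\beta)$ via Proposition~2.3 is exactly what the paper does. The divergence is in the reverse inclusion, and there your proposal has a genuine gap.

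You reduce $\varBA(\beta)\subseteq f(\varBA(\alpha))$ to the claim that every witness $b$ with $b\cap\beta(b)=0$ lies in the range of $f$ acting on $B$, and you justify this by asserting that the action is onto, ``equivalently $f(1)=1$''. That equivalence is false: any Boolean-algebra embedding satisfies $f(1)=1$ yet is typically far from surjective---in the paper's own examples an order-embedding of $\mathbb Q$ whose image avoids an interval induces a non-surjective map on the regular-open algebra. So there is no reason an arbitrary $b\le\varBA(\beta)$ should be of the form $f(a)$, and the line ``$b=f(a)$ for $a:=f^{-1}(b)$'' does not go through. Your own closing remark that ``verifying it for the clones under consideration is the main obstacle'' concedes that the lemma, as stated, is not yet proved; what remains is not a routine check but a hypothesis that is in general too strong.

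The paper handles this direction differently, working with the complementary description $\varBA(\gamma)=-\mathrm{fix}(\gamma)$ rather than with Proposition~2.3. From $f\alpha=\beta f$ it argues that $a\in\mathrm{Fix}(\alpha)$ implies $f(a)\in\mathrm{Fix}(\beta)$, hence $f(\mathrm{fix}(\alpha))\le\mathrm{fix}(\beta)$, and then complements to obtain $\varBA(\beta)\le f(\varBA(\alpha))$. The only structural fact about $f$ invoked in that last step is that it commutes with complementation (so $f(1)=1$), which is strictly weaker than surjectivity. Passing to the $\mathrm{Fix}$ side and then complementing is precisely what circumvents the preimage problem you encountered.
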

\begin{proof}
Suppose $a \in \mathrm{Fix}(\alpha)$, i.e. if $b \subseteq a$ then $g(b)=b$.  Then $\beta(f(b))=f\alpha(b)=f(b)$ and so $f(a) \in \mathrm{Fix}(\beta)$.  Therefore $f (\mathrm{fix}(\alpha)) \subseteq \mathrm{fix}(\beta)$ and thus $\varBA(\beta)\subseteq f(\varBA(\alpha))$.

Let $a \in \varBA(\alpha)$, i.e. $\alpha(a) \cap a = 0$.  Then $f\alpha(a) \cap f(a) = 0$, and therefore $\beta f(a) \cap f(a) = 0$, and thus $f(a) \in \varBA(\beta)$.  This shows that
\[f(\lbrace a \in B \: : \:  a \cap \alpha(a)=0 \rbrace) \subseteq \lbrace a \in B \: : \: a \cap \beta(a) =0 \rbrace \]
and therefore $f(\varBA(\alpha)) \subseteq \varBA(\beta)$.
\end{proof}

We now have a substitute for conjugation, but we can only apply it to the elements that realise $\zeta$.  Some more work, and an assumption, is needed to extend this to all unary functions.

\begin{dfn}
\[\phi^{\zeta}_{AP}(f,\alpha,\beta):= \zeta(f) \wedge \phi_{\mathrm{Aut}}(\alpha) \wedge \phi_{\mathrm{Aut}}(\beta) \wedge \exists \gamma, \delta \in \phi_\aut(\Delta) \: \left(
\begin{array}{c}
  \phi_{Eq}(\beta,\gamma)\wedge  \phi_{Eq}(\alpha,\delta)\\
  \wedge f \delta =\gamma f 
\end{array}
\right)
\]

An English translation of $\phi^\zeta_{Ap}$ is as follows: `$f$ satisfies $\zeta$ and $\alpha$ and $\beta$ are automorphisms.  There are automorphisms $\gamma, \delta$ such that $f \delta = \gamma f$, $\varBA(\beta)=\varBA(\gamma)$ and $\varBA(\alpha)=\varBA(\delta)$.'
\end{dfn}

\begin{lemma}
$\Delta \models \phi^\zeta_{AP}(f,\alpha,\beta)$ if and only if $\Delta \models \zeta(f)$ and $f(\varBA(\alpha))=\varBA(\beta)$.
\end{lemma}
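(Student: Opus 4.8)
The plan is to prove both implications by unwinding the definition of $\phi^\zeta_{AP}$ and invoking Lemma~\ref{Lemma:IHopeThisWorks} once in each direction; that lemma is exactly the bridge between the clone equation $f\delta=\gamma f$ occurring inside $\phi^\zeta_{AP}$ and the Boolean-algebra identity $f(\varBA(\alpha))=\varBA(\beta)$ that we want to characterise. Throughout, $\alpha$ and $\beta$ are tacitly automorphisms — otherwise $\varBA(\alpha)$, $\varBA(\beta)$ are undefined — and $\phi^\zeta_{AP}$ records this via its conjuncts $\phi_\aut(\alpha)$, $\phi_\aut(\beta)$; likewise $\phi_{Eq}$ is read through the clone version of Rubin's Expressibility Theorem, so that $\phi_{Eq}(\alpha,\delta)$ abbreviates $\varBA(\alpha)=\varBA(\delta)$.

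For the forward direction I would assume $\Delta\models\phi^\zeta_{AP}(f,\alpha,\beta)$. Then $\zeta(f)$ holds by definition, so it remains to derive $f(\varBA(\alpha))=\varBA(\beta)$. By hypothesis there are automorphisms $\gamma,\delta$ with $\phi_{Eq}(\beta,\gamma)$, $\phi_{Eq}(\alpha,\delta)$ and $f\delta=\gamma f$. Applying Lemma~\ref{Lemma:IHopeThisWorks} to the equation $f\delta=\gamma f$ gives $\varBA(\gamma)=f(\varBA(\delta))$. Unwinding the two instances of $\phi_{Eq}$ then yields $\varBA(\beta)=\varBA(\gamma)=f(\varBA(\delta))=f(\varBA(\alpha))$, the last step because $\varBA(\delta)=\varBA(\alpha)$ and $f$ is a function. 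That closes this direction.

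For the converse I would assume $\Delta\models\zeta(f)$ and $f(\varBA(\alpha))=\varBA(\beta)$ and produce the witnesses $\gamma,\delta$ demanded by $\phi^\zeta_{AP}$. The naive attempt $\delta:=\alpha$ fails: $\zeta(f)$ does not assert that $f\alpha=\gamma f$ for some automorphism $\gamma$, only that some automorphism $\phi_{Eq}$-equivalent to $\alpha$ can be pushed through $f$ — exploiting exactly this slack is the one point that needs care, and is precisely why $\phi^\zeta_{AP}$ is built with the extra $\phi_{Eq}$ conjuncts rather than demanding $f\alpha=\beta f$ outright. So instead I would instantiate the universal quantifier in $\zeta(f)$ at our automorphism $\alpha$, obtaining automorphisms $\beta_0,\gamma_0\in\phi_\aut(\Delta)$ with $\phi_{Eq}(\alpha,\gamma_0)$ and $f\gamma_0=\beta_0 f$, and then set $\delta:=\gamma_0$ and $\gamma:=\beta_0$. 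Now $\phi_{Eq}(\alpha,\delta)$ is immediate and $f\delta=\gamma f$ holds by construction, so it only remains to verify $\phi_{Eq}(\beta,\gamma)$, i.e. $\varBA(\beta)=\varBA(\beta_0)$. Lemma~\ref{Lemma:IHopeThisWorks} applied to $f\gamma_0=\beta_0 f$ gives $\varBA(\beta_0)=f(\varBA(\gamma_0))=f(\varBA(\alpha))=\varBA(\beta)$, using $\varBA(\gamma_0)=\varBA(\alpha)$ and the hypothesis. Together with $\zeta(f)$ (assumed) and $\phi_\aut(\alpha)$, $\phi_\aut(\beta)$ (the standing assumption on $\alpha,\beta$), this gives $\Delta\models\phi^\zeta_{AP}(f,\alpha,\beta)$.

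I do not expect a serious obstacle: each direction is a short computation, and Lemma~\ref{Lemma:IHopeThisWorks} does all the real work. The only subtlety worth flagging is in the converse, where one must route through the $\phi_{Eq}$-equivalent automorphism $\gamma_0$ furnished by $\zeta(f)$ rather than $\alpha$ itself — everything else is bookkeeping with the definitions of $\phi^\zeta_{AP}$ and $\phi_{Eq}$.
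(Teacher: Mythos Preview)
Your proposal is correct and follows essentially the same approach as the paper: both directions hinge on Lemma~\ref{Lemma:IHopeThisWorks}, and in the converse you invoke $\zeta(f)$ at $\alpha$ to produce the witnesses $\gamma,\delta$ and then chain the $\phi_{Eq}$-equalities exactly as the paper does. Your write-up is in fact more explicit than the paper's about why one must pass through the $\phi_{Eq}$-equivalent $\gamma_0$ rather than use $\alpha$ directly, but the underlying argument is identical.
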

\begin{proof}
First suppose that $f(\varBA(\alpha))=\varBA(\beta)$.  Since $\Delta \models \zeta(f)$ we can find automorphisms $\gamma, \delta $ such that $f \delta = \gamma f$ and $\varBA(\alpha)=\varBA(\delta)$.  Lemma \ref{Lemma:IHopeThisWorks} shows that $\varBA(\gamma)=f(\varBA(\delta))=f(\varBA(\alpha))=\varBA(\beta)$, and therefore $\Delta \models \phi^{\zeta}_{Ap}(f,\alpha,\beta)$.

Now suppose $\Delta \models \phi^{\zeta}_{Ap}(f,\alpha,\beta)$.  Lemma \ref{Lemma:IHopeThisWorks} shows that $\varBA(\gamma)=f(\varBA(\delta))$, so $f(\varBA(\alpha))=\varBA(\beta)$. 
\end{proof}

\begin{dfn}
Let $\zeta_{inj}$ be the following formula for $f \in \Delta_1$:
\[\zeta_{inj}(f):= \zeta(f) \wedge \forall \alpha, \beta, \gamma \in \phi_\aut(\Delta) ((\phi^\zeta_{AP}(f, \alpha, \gamma) \wedge \phi^\zeta_{AP}(f, \beta, \gamma) ) \rightarrow \phi_{Eq}(\alpha,\beta))\]
\end{dfn}

\begin{lemma}\label{Lemma:Injective}
If $\Delta \models \zeta_{inj}(f)$ then $f$ is injective on $\lbrace \varBA(\alpha) \: : \: \alpha \in \phi_\aut(\Delta) \rbrace$.
\end{lemma}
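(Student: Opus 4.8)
The plan is to argue by contradiction, using the characterisation of $\phi^\zeta_{AP}$ from the preceding lemma together with the conjugation substitute provided by Lemma~\ref{Lemma:IHopeThisWorks}. So suppose $\alpha,\beta \in \phi_\aut(\Delta)$ satisfy $f(\varBA(\alpha)) = f(\varBA(\beta))$; I want to conclude $\varBA(\alpha) = \varBA(\beta)$, i.e. $\Delta \models \phi_{Eq}(\alpha,\beta)$.

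First I would manufacture an automorphism $\gamma$ whose variety is exactly the common value $f(\varBA(\alpha)) = f(\varBA(\beta))$. This is precisely where the hypothesis $\zeta(f)$ enters: applying the definition of $\zeta$ to $\alpha$ yields automorphisms $\beta_1,\gamma_1 \in \phi_\aut(\Delta)$ with $\phi_{Eq}(\alpha,\gamma_1)$ and $f\gamma_1 = \beta_1 f$, and then Lemma~\ref{Lemma:IHopeThisWorks} gives $\varBA(\beta_1) = f(\varBA(\gamma_1)) = f(\varBA(\alpha))$. Setting $\gamma := \beta_1$, we have an automorphism with $\varBA(\gamma) = f(\varBA(\alpha)) = f(\varBA(\beta))$.

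With this $\gamma$ fixed, I would invoke the characterisation lemma for $\phi^\zeta_{AP}$: since $\Delta \models \zeta(f)$ and $f(\varBA(\alpha)) = \varBA(\gamma)$, we get $\Delta \models \phi^\zeta_{AP}(f,\alpha,\gamma)$, and since also $f(\varBA(\beta)) = f(\varBA(\alpha)) = \varBA(\gamma)$, we get $\Delta \models \phi^\zeta_{AP}(f,\beta,\gamma)$. Now the defining clause of $\zeta_{inj}(f)$, instantiated at $\alpha,\beta,\gamma$, forces $\Delta \models \phi_{Eq}(\alpha,\beta)$, that is $\varBA(\alpha) = \varBA(\beta)$, as required.

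I do not expect a genuine obstacle here: the argument is essentially bookkeeping that converts the syntactic formula $\zeta_{inj}$ (which only speaks of automorphisms and $\phi^\zeta_{AP}$) into the semantic statement about injectivity of $f$ on $\{\varBA(\alpha):\alpha\in\phi_\aut(\Delta)\}$. The one point that requires a little care — and the only place where the full strength of $\zeta(f)$, rather than merely Lemma~\ref{Lemma:IHopeThisWorks}, is needed — is the production of a witnessing automorphism $\gamma$ with $\varBA(\gamma)$ equal to the image $f(\varBA(\alpha))$, since without such a $\gamma$ the two instances of $\phi^\zeta_{AP}$ cannot be stated and the injectivity clause of $\zeta_{inj}$ cannot be applied.
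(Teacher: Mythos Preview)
Your proposal is correct and follows the same approach as the paper, which simply observes in one line that, via the characterisation of $\phi^\zeta_{AP}$, the formula $\zeta_{inj}$ translates to ``$f(\varBA(\alpha))=f(\varBA(\beta))$ implies $\varBA(\alpha)=\varBA(\beta)$''. You have in fact been more careful than the paper on one point it glosses over: to instantiate the injectivity clause of $\zeta_{inj}$ one must first exhibit an automorphism $\gamma$ with $\varBA(\gamma)=f(\varBA(\alpha))$, and your use of $\zeta(f)$ together with Lemma~\ref{Lemma:IHopeThisWorks} to produce such a $\gamma$ is exactly the right way to close that small gap.
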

\begin{proof}
The formula $\zeta_{inj}$ says that if $f(\varBA(\alpha))=f(\varBA(\beta)$ then $\varBA(\alpha)=\varBA(\beta)$.
\end{proof}

\begin{dfn}\label{dfn:localmoveclone}
$\Delta$ is a \emph{locally moving clone} if it satisfies the following two conditions:
\begin{description}
\item[The Group Condition] $\phi_{\mathrm{Aut}}(\Delta)$ is a locally moving group, acting on complete atomless Boolean algebra $B$.

\item[The Monoid Condition] \[\Delta \models \forall f  , \alpha \in \Delta_1 \: \left( \phi_\aut(\alpha)  \rightarrow \exists g, \beta,\gamma \in \Delta_1\: \left(
\begin{array}{c}

\phi_\aut(\beta) \wedge \phi_\aut(\gamma) \wedge \\
\zeta_{inj}(g)  \wedge  \phi_{Eq}(\alpha,\gamma) \\
 \wedge ( g f) \gamma = \beta ( g f )
\end{array}
\right)\right)\]

In English, `For all $f \in \Delta_1$ and all $\alpha \in \phi_\aut(\Delta)$ there are $g \in \zeta_{inj}(\Delta)$ and $\beta,\gamma \in \phi_\aut(\Delta)$ such that $\varBA(\alpha)=\varBA(\gamma)$ and $gf \gamma=\beta fg$.'
\end{description}
\end{dfn}

The Monoid Condition as written is equivalent to the far simpler `$\forall f \in \Delta_1 \exists g \in \Delta_1 ( \zeta(g) \wedge \zeta(gf))$', however using the more complicated version will simplify subsequent proofs, as well as make the notation used in those proofs easier to follow.

\begin{lemma}\label{Remark:FODef}
There is a sentence $\sigma_{LM}$ such that $\Delta \models \sigma_{LM}$ if and only if $\Delta$ is a locally moving clone.
\end{lemma}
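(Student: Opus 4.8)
The plan is to exhibit $\sigma_{LM}$ as an explicit first-order sentence in $\mathcal{L}_{Cl}$, assembled from the building blocks already constructed in this section. The key observation is that every ingredient of the definition of ``locally moving clone'' has already been shown to be first-order expressible, so the work is one of careful bookkeeping rather than new mathematics. Concretely, I would proceed as follows.

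First, recall that Rubin's Expressibility Theorem, as amended just after the definition of $\phi_{\mathrm{Aut}}$ to insist that all quantified variables satisfy $\phi_{\mathrm{Aut}}$, provides $\mathcal{L}_{Cl}$-formulas $\phi_{Eq}$, $\phi_{\subseteq}$ and $\phi_{Ap}$ that correctly compute $\varBA$-equality, $\varBA$-containment and the action of an automorphism on a $\varBA$-set, \emph{relative to the group $\phi_{\mathrm{Aut}}(\Delta)$}. Second, Rubin's Corollary 2.12 gives a sentence $\sigma^{Gp}_{LM}$ in $\mathcal{L}_{Gp}$ expressing ``locally moving group''; relativising every quantifier in $\sigma^{Gp}_{LM}$ to $\phi_{\mathrm{Aut}}$ and translating the group operations via their $\mathcal{L}_{Cl}$-definitions on $\phi_{\mathrm{Aut}}(\Delta)$ (composition is $\circ_{1,1}$, the identity is $\pi^1_1$, and inverses exist and are unique by $\phi_{\mathrm{Aut}}$ itself) yields an $\mathcal{L}_{Cl}$-sentence $\sigma_{GrpCond}$ such that $\Delta \models \sigma_{GrpCond}$ iff the Group Condition holds. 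Third, the formulas $\zeta$, $\phi^{\zeta}_{AP}$ and $\zeta_{inj}$ are by inspection already $\mathcal{L}_{Cl}$-formulas (they only use $\phi_{\mathrm{Aut}}$, $\phi_{Eq}$, composition and quantification over $\Delta_1$, the last of which is the ambient sort). Finally, the Monoid Condition is literally displayed as an $\mathcal{L}_{Cl}$-sentence $\sigma_{MonCond}$ in Definition~\ref{dfn:localmoveclone}. Set $\sigma_{LM} := \sigma_{GrpCond} \wedge \sigma_{MonCond}$.

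It then remains to verify both directions. If $\Delta \models \sigma_{LM}$, then $\Delta \models \sigma_{GrpCond}$ forces $\phi_{\mathrm{Aut}}(\Delta)$ to be a locally moving group, so by Rubin's definition there is a complete atomless Boolean algebra $B$ on which it acts with the required smallness property; and $\Delta \models \sigma_{MonCond}$ is verbatim the Monoid Condition, using that on such a $B$ the formulas $\phi_{Eq}$, $\zeta$, $\zeta_{inj}$ have the semantics established in the earlier lemmas of this subsection. Hence $\Delta$ is a locally moving clone. Conversely, if $\Delta$ is a locally moving clone, the Group Condition supplies a witnessing $B$, and one must check that $\sigma_{GrpCond}$ and $\sigma_{MonCond}$ hold when interpreted in $\Delta$ with respect to \emph{that} $B$ — which again is exactly what Rubin's theorems and the preceding lemmas assert, since those results hold for \emph{any} group acting on \emph{any} Boolean algebra.

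The main obstacle is the subtlety flagged in Remark~\ref{remark:neededdetail}: Rubin's original $\phi_{Ap}$ is built from conjugation, which is unavailable for non-invertible clone elements, so one must be scrupulous that the version of the Expressibility Theorem being invoked is the relativised one (quantifiers restricted to $\phi_{\mathrm{Aut}}$), and that the semantics of $\zeta$, $\phi^{\zeta}_{AP}$, $\zeta_{inj}$ proved in Lemmas~\ref{Lemma:IHopeThisWorks}--\ref{Lemma:Injective} do not secretly depend on a choice of $B$ beyond ``some complete atomless Boolean algebra witnessing the Group Condition''. A secondary point requiring care is the bookkeeping of arities in $\mathcal{L}_{Cl}$: all the relevant quantification is over the unary part $\Delta_1$ and all compositions are instances of $\circ_{1,1}$, so no genuine difficulty arises, but the relativisation must be written out so that $\sigma_{LM}$ is a bona fide sentence of $\mathcal{L}_{Cl}$ and not merely a schema.
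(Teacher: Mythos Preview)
Your proposal is correct and follows the same approach as the paper's own proof: relativise Rubin's sentence for locally moving groups to $\phi_{\mathrm{Aut}}(\Delta)$ to handle the Group Condition, and observe that the Monoid Condition is already written out as an $\mathcal{L}_{Cl}$-sentence. Your version is considerably more detailed---in particular your care about the well-definedness of the semantics relative to a choice of $B$ and about arities is sound but not strictly necessary---whereas the paper dispatches the lemma in two sentences.
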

\begin{proof}
 Condition 1 of \ref{dfn:localmoveclone} is expressible in a first order way, because the group part of $\Delta$ and the its structure are definable, and being a locally moving group is expressible.  Condition 2 gives the rest of the sentence explicitly.

\end{proof}

We are now ready to get an analogue of Rubin's Expressibility Theorem for clones.

\begin{theorem}\label{Theorem:DefAction}
Let $\Delta$ be a locally moving clone, acting on complete atomless Boolean algebra $B$, and let $n$ be any natural number.  Let $f \in \Delta_n$, let $\bar{\alpha} \in \phi_{\mathrm{Aut}}(\Delta)^n$, and let $\beta \in \phi_{\mathrm{Aut}}(\Delta)$.  There is a formula $\phi^{n}_{AP}(f,\bar{\alpha},\beta)$ such that $\Delta \models \phi^{n}_{AP}(f,\bar{\alpha},\beta)$ if and only if $f(\varBA(\bar{\alpha}))=\varBA(\beta) $.
\end{theorem}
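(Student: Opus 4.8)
The plan is to argue by induction on the arity $n$, where I note at the outset that even the case $n=1$ is genuinely new: the formula $\phi^{\zeta}_{AP}$ only handles unary functions that already satisfy $\zeta$, whereas an arbitrary $f\in\Delta_1$ need not be algebraically canonical, and repairing this is exactly what the Monoid Condition is for.

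For the base case $n=1$, fix $f\in\Delta_1$ and apply the Monoid Condition (in its simplified form) to obtain $g\in\Delta_1$ with $\Delta\models\zeta_{inj}(g)$ and $\Delta\models\zeta(gf)$. Since composition in the clone induces composition of the associated maps on $B$, we have $(gf)(\varBA(\alpha))=g(f(\varBA(\alpha)))$ for every $\alpha\in\phi_\aut(\Delta)$. I would then set
\[
\phi^{1}_{AP}(f,\alpha,\beta):=\exists g\,\exists\beta'\in\phi_\aut(\Delta)\ \big(\zeta_{inj}(g)\wedge\zeta(gf)\wedge\phi^{\zeta}_{AP}(gf,\alpha,\beta')\wedge\phi^{\zeta}_{AP}(g,\beta,\beta')\big).
\]
If $f(\varBA(\alpha))=\varBA(\beta)$, then choosing $g$ from the Monoid Condition and $\beta'$ with $\varBA(\beta')=g(\varBA(\beta))$ makes both occurrences of $\phi^{\zeta}_{AP}$ true, because $(gf)(\varBA(\alpha))=g(f(\varBA(\alpha)))=g(\varBA(\beta))=\varBA(\beta')$. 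Conversely, the two instances of $\phi^{\zeta}_{AP}$ give $g(f(\varBA(\alpha)))=\varBA(\beta')=g(\varBA(\beta))$, and since every unary function of a locally moving clone acts injectively on $B$ (the consequence of the Monoid Condition recorded in the introduction; compare Lemma \ref{Lemma:Injective}), we may cancel $g$ and conclude $f(\varBA(\alpha))=\varBA(\beta)$.

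For the inductive step, fix $f\in\Delta_{n}$ and $(\alpha_1,\dots,\alpha_n)$; the idea is to absorb the last coordinate into the remaining ones. For $\gamma\in\phi_\aut(\Delta)$ put $f_\gamma:=f\circ(\pi^{n-1}_1,\dots,\pi^{n-1}_{n-1},\gamma\circ\pi^{n-1}_{n-1})\in\Delta_{n-1}$, so that $f_\gamma(\varBA(\alpha_1),\dots,\varBA(\alpha_{n-1}))=f(\varBA(\alpha_1),\dots,\varBA(\alpha_{n-1}),\gamma(\varBA(\alpha_{n-1})))$; since the action of $f$ on $B$ is monotone and join-continuous in each coordinate, $f(\varBA(\alpha_1),\dots,\varBA(\alpha_n))$ is the join of the elements $f_\gamma(\varBA(\alpha_1),\dots,\varBA(\alpha_{n-1}))$ taken over those $\gamma$ with $\gamma(\varBA(\alpha_{n-1}))\subseteq\varBA(\alpha_n)$, the covering here being supplied by the richness of the group's action on $B$ that comes from local moving-ness. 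The condition $\gamma(\varBA(\alpha_{n-1}))\subseteq\varBA(\alpha_n)$ is expressible via Rubin's $\phi_{\subseteq}$ and $\phi_{Ap}$, and $f_\gamma(\varBA(\alpha_1),\dots,\varBA(\alpha_{n-1}))\subseteq\varBA(\beta)$ is expressible by the inductive hypothesis ($\phi^{n-1}_{AP}$) together with $\phi_{\subseteq}$; so I would let $\phi^{n}_{AP}(f,(\alpha_1,\dots,\alpha_n),\beta)$ assert that $\varBA(\beta)$ is the $\subseteq$-least member of $\{\varBA(\delta):\delta\in\phi_\aut(\Delta)\}$ lying above all of these $f_\gamma(\varBA(\alpha_1),\dots,\varBA(\alpha_{n-1}))$.

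I expect the inductive step to be the main obstacle, for two related reasons. First, the derived operations $f_\gamma$ see the last coordinate only through the orbit of $\varBA(\alpha_{n-1})$, so one must actually establish the covering statement $\bigvee\{\gamma(\varBA(\alpha_{n-1})):\gamma(\varBA(\alpha_{n-1}))\subseteq\varBA(\alpha_n)\}=\varBA(\alpha_n)$, or else replace this reduction by one that avoids matching supports across coordinates. Second, one must check that taking the $\subseteq$-least $\varBA(\delta)$ above the relevant join recovers $f(\varBA(\alpha_1),\dots,\varBA(\alpha_n))$ exactly — which is automatic precisely when $f(\varBA(\alpha_1),\dots,\varBA(\alpha_n))$ is itself of the form $\varBA(\delta)$, and otherwise forces $\phi^{n}_{AP}(f,(\alpha_1,\dots,\alpha_n),\cdot)$ to be unsatisfiable, exactly as the statement of the theorem anticipates. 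The base case, by contrast, is routine once the cancellation of $g$ on $B$ is in hand.
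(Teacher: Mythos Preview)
Your treatment of the base case $n=1$ is essentially the paper's argument, repackaged: you invoke $\phi^\zeta_{AP}$ twice (for $gf$ and for $g$) where the paper writes out the intertwining equation $gf\delta=\gamma gf$ directly and uses $\phi^\zeta_{AP}$ only for $g$, but the content is the same, and the cancellation of $g$ via $\zeta_{inj}$ is exactly the paper's use of Lemma~\ref{Lemma:Injective}. One small imprecision: you justify cancelling $g$ by saying ``every unary function of a locally moving clone acts injectively on $B$''; what Lemma~\ref{Lemma:Injective} actually gives is injectivity of $g$ on $\{\varBA(\alpha):\alpha\in\phi_\aut(\Delta)\}$ when $\zeta_{inj}(g)$ holds, and that is what both you and the paper are really using.

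For $n>1$ you diverge sharply from the paper and make life much harder than necessary. The paper does \emph{not} induct on arity. It reduces directly from $n$ to the unary case in a single step: for each coordinate $i$ one forms the clone term $l_i\in\Delta_1$ obtained from $f$ by plugging $\alpha_j$ into the $j$-th slot for every $j\neq i$ and leaving the $i$-th slot as the variable, and then takes $\phi^n_{AP}(f,\bar\alpha,\beta)$ to be (essentially) the conjunction over $i$ of $\phi^1_{AP}(l_i,\alpha_i,\beta)$, together with the bookkeeping that the $\alpha_i$ and $\beta$ are invertible. Since each $l_i$ is a genuine element of $\Delta_1$ built by clone composition, the already-proved unary case applies verbatim.

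This direct reduction sidesteps entirely the two obstacles you yourself flag: the covering identity $\bigvee\{\gamma(\varBA(\alpha_{n-1})):\gamma(\varBA(\alpha_{n-1}))\subseteq\varBA(\alpha_n)\}=\varBA(\alpha_n)$ and the join-continuity of $f$ on $B$ in each coordinate. Neither of these is established anywhere in the paper, and both are genuine gaps in your inductive argument as it stands. Your route might be salvageable with additional lemmas about how the clone acts on $B$, but the paper's one-step reduction shows that none of that machinery is needed.
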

\begin{proof}
We will first deal with $n=1$.  We define $\phi^{1}_{AP}(f,\alpha,\beta)$ to be the following formula:

\[ \phi_\aut(\alpha) \wedge \phi_\aut(\beta) \wedge \exists \gamma,\delta,g \in \Delta_1 \: \left(
\begin{array}{ c}
\zeta_{inj}(g) \wedge \phi_\aut(\gamma) \wedge \phi_\aut(\delta) \wedge \\
\phi_{Eq}(\alpha,\delta) \wedge ( gf \delta = \gamma gf )   \wedge \phi^\zeta_{Ap}(g,\beta,\gamma)
\end{array}
 \right)
\]

An English translation of $\phi^{1}_{Ap}$ is as follows: `$\alpha$ and $\beta$ are automorphisms.  There are $\gamma, \delta \in \phi_\aut(\Delta)$ and $g \in \zeta_{inj}(\Delta)$ such that $fg \delta = \gamma fg$, $\varBA(\alpha)=\varBA(\delta)$ and $g(\varBA(\beta))=\varBA(\gamma)$.'

First suppose that $f(\varBA(\alpha))=\varBA(\beta)$.  The Monoid Condition shows that we can find $g$ such that $\zeta_{inj}(g)$ and  automorphisms $\gamma$, $\delta$ such that $\varBA(\alpha)=\varBA(\delta)$ and $gf\delta = \gamma gf$.  Lemma \ref{Lemma:IHopeThisWorks} shows that $gf(\varBA(\delta))=\varBA(\gamma)$, so $g(\varBA(\beta))=\varBA(\gamma)$ implies that $f(\alpha)=\beta$.  Now suppose $\Delta \models \phi^{1}_{Ap}(f,\alpha,\beta)$.  Lemma \ref{Lemma:IHopeThisWorks} shows that $\varBA(\gamma)=gf(\varBA(\alpha))$, so $f(\varBA(\alpha)) \in g^{-1}(\varBA(\gamma))$.  By Lemma \ref{Lemma:Injective} $g^{-1}(\varBA(\gamma))=\varBA(\beta)$, so $f(\varBA(\alpha))=\varBA(\beta)$.

We now have a formula $\phi^{1}_{AP}$ such that $\Delta \models \phi^{1}_{AP}(f,\alpha,\beta) \Leftrightarrow f(\varBA(\alpha))=\varBA(\beta) $ which works for all $f \in \Delta_1$.  Using this formula, we define $\phi^{n}_{AP}$ to be the following formula:

\[
\left(
\begin{array}{l} \left( \bigwedge_{i=1}^{n}

\phi_{\mathrm{Aut}}(\alpha_i) \right) \wedge   \phi_{\mathrm{Aut}}(\beta)

\wedge \\

\forall \bar{l}\in \Delta_1 \bigwedge_{i=1}^{n} \left(  \forall \gamma \in \phi_\aut(\Delta) \left(
\begin{array}{c}
l(\gamma)=f(\alpha_1, \ldots , \alpha_{i-1}, \gamma, \alpha_{i+1} , \ldots \alpha_n)\\
\rightarrow \phi^{1}_{Ap}(l,\alpha_i,\beta)
\end{array}\right)\right)\end{array}\right)
\]

This formula insists that for every $i$, if $l \in \Delta_1$ is the function we get by fixing for all $j \not= i$ the $j^{th}$ argument of $f$ to be $\alpha_j$ then $l(\varBA(\alpha_i))=\varBA(\beta)$, thus guaranteeing that $f(\varBA(\bar{\alpha}))=\varBA(\beta) $.
\end{proof}

\begin{cor}
The action of $\Delta$ on $B(\Delta)$ is faithful.
\end{cor}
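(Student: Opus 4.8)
Faithfulness here means that the representation $\rho$ of $\Delta$ on the underlying set of $B(\Delta)$ — sending an $n$-ary $f$ to the operation $(b_1,\dots,b_n)\mapsto f(b_1,\dots,b_n)$ — is injective on every $\Delta_n$. So the plan is: take $f,g\in\Delta_n$ that induce the same operation on $B(\Delta)^n$ and show $f=g$.

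The key input is Theorem \ref{Theorem:DefAction}. If $f$ and $g$ induce the same operation, then in particular $f(\varBA(\bar\alpha))=g(\varBA(\bar\alpha))$ for every tuple $\bar\alpha\in\phi_\aut(\Delta)^n$, so $f$ and $g$ satisfy exactly the same instances of the definable relation $\phi^{n}_{AP}$; that is, $f$ and $g$ act identically on the set $\{\varBA(\bar\alpha):\bar\alpha\in\phi_\aut(\Delta)^n\}$ of tuples of variations. To turn ``agreeing on variations'' into ``agreeing in $\Delta$'' I would first normalise via the Monoid Condition: choose $h\in\zeta_{inj}(\Delta)$ with $\zeta(hf)$ (applied to a unary section of $f$ when $n>1$); since $f$ and $g$ induce the same operation, the same $h$ witnesses $\zeta(hg)$, and $hf,hg$ still induce the same operation; as $h$ is injective on variations (Lemma \ref{Lemma:Injective}) it is enough to prove $hf=hg$, so we may assume $f,g$ algebraically canonical. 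Then for each automorphism $\alpha$ there are automorphisms $\beta,\gamma$ with $\varBA(\alpha)=\varBA(\gamma)$ and $f\gamma=\beta f$, and Lemma \ref{Lemma:IHopeThisWorks} gives $\varBA(\beta)=f(\varBA(\alpha))$; combining this with $f(\varBA(\alpha))=g(\varBA(\alpha))$ and Rubin's expressibility of $\varBA(\cdot)=\varBA(\cdot)$, $\varBA(\cdot)\subseteq\varBA(\cdot)$ and $\alpha(\varBA(\cdot))=\varBA(\cdot)$, one pins down the two canonical elements to coincide.

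The step I expect to be the main obstacle is the last one: closing the gap between ``$f$ and $g$ agree on the variations'' and ``$f=g$ in $\Delta$''. One half is free — $\{\varBA(\alpha):\alpha\in\phi_\aut(\Delta)\}$ is dense in $B(\Delta)$, which is precisely the defining property of a locally moving group (every nonzero $b$ dominates a nonzero $\varBA(\alpha)$). The other half requires that an operation coming from $\Delta$ is determined by its restriction to a dense subalgebra, and such operations are not a priori join-continuous. I expect this to be handled by exploiting how the clone's action meshes with the order of $B(\Delta)$ (monotonicity of polymorphisms of $(\mathbb{Q},<)$, compatibility with the branching structure for $(\mathbb{L},C)$), or by arranging the requisite continuity already in the construction of $B(\Delta)$. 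With that secured, $\rho$ is injective, i.e. the action of $\Delta$ on $B(\Delta)$ is faithful.
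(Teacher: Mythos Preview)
The paper gives no proof for this corollary; it is stated bare immediately after Theorem~\ref{Theorem:DefAction}, and even uses the notation $B(\Delta)$ before that is defined. So there is nothing substantive in the paper to compare against.

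Your proposal, however, has gaps beyond the one you acknowledge. The reduction step is circular: the condition $\zeta(hf)$ involves the clone equation $(hf)\gamma=\beta(hf)$, which is \emph{not} determined by the action of $hf$ on $B(\Delta)$ unless the action is already known to be faithful, so your claim that ``since $f$ and $g$ induce the same operation, the same $h$ witnesses $\zeta(hg)$'' begs the very question at issue. Separately, Lemma~\ref{Lemma:Injective} gives only that $h$ is injective on $\mathrm{Var}(\Delta)$, which is not the same as $h$ being left-cancellable in $\Delta_1$; so even if you established $hf=hg$, you could not deduce $f=g$ from that lemma alone. The obstacle you do flag---passing from agreement on the dense set $\mathrm{Var}(\Delta)$ to equality of clone elements---is also genuine, and nothing proved in the paper up to this point supplies the required continuity. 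If anything, the honest route to faithfulness here runs through Theorem~\ref{theorem:LMtopisRight}, where (for $\Delta=\pol(M)$) the action on $B(\Delta)$ is shown to recover the action on $M$ via ultrafilters; since distinct polymorphisms differ on $M$, faithfulness follows. But that theorem appears \emph{after} the corollary, so the paper's presentation is at best out of order.
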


\begin{dfn}
Let $\Delta$ be a locally moving clone.  $B(\Delta)$ is the complete atomless Boolean algebra that $\Delta$ acts on.
\end{dfn}

\begin{prop}\label{thm:cloneinduced}
Suppose $\Delta$ is a locally moving clone and $\theta : \Delta \rightarrow \Gamma$ is an isomorphism.  Then there is an isomorphism $\tau: B(\Delta) \rightarrow B(\Gamma)$ such that $ \theta(f) = \tau f \tau^{-1} $ for all $f \in \Delta$.
\end{prop}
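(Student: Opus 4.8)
\emph{Overall approach.} The plan is to glue Rubin's reconstruction theorem for the group of invertibles (Theorem~\ref{RubinInduced}) onto the definability of the full clone action (Theorem~\ref{Theorem:DefAction}). Being a locally moving clone is expressed by a single $\mathcal{L}_{Cl}$-sentence $\sigma_{LM}$ (Lemma~\ref{Remark:FODef}), so $\Gamma \models \sigma_{LM}$ and $\Gamma$ is a locally moving clone acting on a complete atomless Boolean algebra $B(\Gamma)$. The predicate $\phi_\aut$ and the group operations on it are $\mathcal{L}_{Cl}$-definable, so $\theta$ restricts to an isomorphism of groups $\phi_\aut(\Delta) \to \phi_\aut(\Gamma)$; both are locally moving (the Group Condition), witnessed respectively by $B(\Delta)$ and $B(\Gamma)$. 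Rubin's Reconstruction Theorem~\ref{RubinInduced} then supplies a Boolean-algebra isomorphism $\tau : B(\Delta) \to B(\Gamma)$ with $\theta(\alpha) = \tau\alpha\tau^{-1}$ for every $\alpha \in \phi_\aut(\Delta)$.

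\emph{$\tau$ transports variation supports.} Next I would check directly from the definitions that $\varBA(\tau\alpha\tau^{-1}) = \tau(\varBA(\alpha))$ for every $\alpha \in \phi_\aut(\Delta)$: the equivalence $x \subseteq y \Leftrightarrow \tau x \subseteq \tau y$ and the identity $\tau\alpha\tau^{-1}(\tau x) = \tau(\alpha x)$ give $\mathrm{Fix}(\tau\alpha\tau^{-1}) = \tau(\mathrm{Fix}(\alpha))$, and $\tau$, being an isomorphism of complete Boolean algebras, commutes with arbitrary joins and with complementation. In particular $\varBA(\theta(\alpha)) = \tau(\varBA(\alpha))$ for all $\alpha \in \phi_\aut(\Delta)$.

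\emph{The clone part.} Now fix $f \in \Delta_n$ and $\bar\alpha = (\alpha_1,\dots,\alpha_n), \beta \in \phi_\aut(\Delta)$. Since $\theta$ preserves all $\mathcal{L}_{Cl}$-formulas, Theorem~\ref{Theorem:DefAction} yields
\[
\begin{aligned}
f(\varBA(\bar\alpha)) = \varBA(\beta)
&\iff \Delta \models \phi^{n}_{AP}(f,\bar\alpha,\beta) \\
&\iff \Gamma \models \phi^{n}_{AP}(\theta f,\theta\bar\alpha,\theta\beta) \\
&\iff \theta(f)\!\left(\varBA(\theta\bar\alpha)\right) = \varBA(\theta\beta),
\end{aligned}
\]
and by the previous paragraph the last equation reads $\theta(f)\!\left(\tau\varBA(\bar\alpha)\right) = \tau\varBA(\beta)$, equivalently $(\tau^{-1}\theta(f)\tau)(\varBA(\bar\alpha)) = \varBA(\beta)$. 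Hence $f$ and $\tau^{-1}\theta(f)\tau$ act identically on every tuple drawn from $\mathrm{Var}(B(\Delta),\phi_\aut(\Delta))$.

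\emph{Conclusion and main obstacle.} It remains to upgrade this to the equality $\theta(f) = \tau f \tau^{-1}$, i.e.\ to argue that a clone element is pinned down by its action on the supports $\varBA(\alpha)$. Here I would invoke the faithfulness corollary immediately after Theorem~\ref{Theorem:DefAction} together with the density of $\mathrm{Var}(B(\Delta),\phi_\aut(\Delta))$ in $B(\Delta)$ — below any $b \neq 0$ there is $\alpha$ with $\varBA(\alpha) \le b$ and $\varBA(\alpha) \neq 0$, by local movement and faithfulness of the action — so that the data extracted in the previous step determines the element. This is the step where the real work lies: one must be precise about the sense in which $\Delta$ and $\Gamma$ act on their Boolean algebras, and in particular about $\tau f \tau^{-1}$ being (or inducing) a bona fide element of $\Gamma$ whose action is governed by the relations $g(\varBA(\bar\alpha)) = \varBA(\beta)$. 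Everything before it is bookkeeping around the two quoted theorems plus the routine computation $\varBA(\tau\alpha\tau^{-1}) = \tau(\varBA(\alpha))$; the identification in this final step is what I expect to demand genuine care.
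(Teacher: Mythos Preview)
Your proposal is correct and follows essentially the same approach as the paper: definability of $\mathrm{Var}$ and of the clone action via $\phi^n_{AP}$, together with density of $\mathrm{Var}$ in $B$, yield the conjugating isomorphism $\tau$. The only cosmetic difference is that you extract $\tau$ from Rubin's group-level reconstruction theorem (Theorem~\ref{RubinInduced}) and then verify $\varBA(\theta(\alpha)) = \tau(\varBA(\alpha))$, whereas the paper builds $\tau$ directly by noting that $\mathrm{Var}(\Delta)$ and its order are $\mathcal{L}_{Cl}$-definable, so $\theta$ induces an order-isomorphism $\mathrm{Var}(\Delta)\to\mathrm{Var}(\Gamma)$ which extends by density; these give the same $\tau$, and the paper's one clause ``the actions of $\Delta$ and $\Gamma$ are also definable, so this $\tau$ has the desired property'' is exactly your final density-plus-faithfulness step.
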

\begin{proof}
$\mathrm{Var}(\Delta)$ and $\mathrm{Var}(\Gamma)$ are definable in $\Delta$ and $\Gamma$ respectively, so $\theta$ induces an isomorphism from $\mathrm{Var}(\Delta)$ to $\mathrm{Var}(\Gamma)$.  Since these are both dense in $B(\Delta)$ and $B(\Gamma)$ respectively, this induced isomorphism extends to an isomorphism $\tau: B(\Delta) \rightarrow B(\Gamma)$.

The actions of $\Delta$ and $\Gamma$ are also definable, so this $\tau$ has the desired property.
\end{proof}

We've extended Rubin's work to clones, but currently we know that isomorphisms between locally moving clones are homeomorphisms of the topology with respect to the topologies arising from their actions on the Boolean algebras.  We are interested in the topology on $\pol(M)$ arising from the action on $M$, not $B(\pol(M))$.

\begin{theorem}\label{theorem:LMtopisRight}
If $\pol(M)$ is a locally moving clone then the topology on $\pol(M)$ from its action on $M$ and the topology from its action on $B(\pol(M))$ are the same.
\end{theorem}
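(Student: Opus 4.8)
The plan is to show that $\tau_B$, the topology on $\Delta := \pol(M)$ coming from its (faithful, by the Corollary) action on $B := B(\Delta)$, refines $\tau_M$, the topology of pointwise convergence on $M$, and conversely. Both topologies are generated by sets of the form $\{g\in\Delta_n : g(\bar p)=q\}$ --- with $\bar p,q$ ranging over $B$, respectively over $M$ --- so in each direction it is enough to check that a subbasic open set of one topology is open in the other; on the group $\phi_\aut(\Delta)=\aut(M)$ this amounts to matching up the two families of open subgroups, and a routine reduction, similar in spirit to that in the proof of Theorem~\ref{Theorem:DefAction}, brings everything down to the unary case, so I will only describe $n=1$.

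For the inclusion $\tau_B\subseteq\tau_M$ I would first isolate a dense subalgebra $D\leq B$ each of whose elements $d$ is \emph{finitely $M$-supported}, meaning that $G_{(\bar a_d)}$ fixes $d$ for some finite tuple $\bar a_d\in M^{<\omega}$. Since every $g\in\Delta_1$, and in particular every automorphism, is compatible with the suprema through which $B$ is built from $\mathrm{Var}(\Delta)$, the value $g(p)$ for $p\in B$ is already determined by $g{\restriction}D$, so it suffices to see that $\{g : g(p)=q\}$ is $\tau_M$-open for $p,q\in D$. Writing $p=\varBA(\alpha)$ and $q=\varBA(\beta)$ with $\alpha,\beta$ finitely $M$-supported, Theorem~\ref{Theorem:DefAction}, together with the fact that each such $\varBA(\alpha)$ is itself definable from $\bar a_{\varBA(\alpha)}$, converts ``$g(\varBA(\alpha))=\varBA(\beta)$'' into a constraint on the finitely many values $g{\restriction}(\bar a_{\varBA(\alpha)}\bar a_{\varBA(\beta)})$, and such a set is $\tau_M$-open.

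For the reverse inclusion $\tau_M\subseteq\tau_B$ it suffices to show that, for each $a\in M$, the evaluation $g\mapsto g(a)$ is $\tau_B$-continuous. Here one uses the concrete incarnation of $B$ as the completion of an algebra $D_0$ of ``finite-$M$-support regions'' and codes $a$ by the prime filter $F_a=\{d\in D_0 : a\in d\}$. The key point is that finitely many elements of $D_0$ already pin $a$ down, as the unique point lying in, respectively outside, a suitable finite family of regions (in the $(\mathbb{Q},<)$ picture these are the two ``combs'' accumulating at $a$ from the left and the right), so that ``$g(a)=b$'' is equivalent to a constraint on finitely many values of $g$ on $D_0$ and is therefore $\tau_B$-open.

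The main obstacle is the step shared by the two inclusions: producing the dense subalgebra $D$ and, more fundamentally, identifying $B(\Delta)$ concretely as the completion of the natural algebra built from finite tuples of $M$ (the interval algebra of $\mathbb{Q}$ when $(M,\mathcal{L})$ is a reduct of $(\mathbb{Q},<)$, and the countable atomless Boolean algebra in the other cases treated later), together with the verification that distinct points of $M$ give distinct, and finitely separated, prime filters. The abstract local-movement axioms only supply $B(\Delta)$ up to isomorphism, with no a priori link to $M$, so it is precisely here that the combinatorial geometry of $M$ must be fed in; once the identification is in hand, both inclusions are routine applications of the expressibility machinery established above.
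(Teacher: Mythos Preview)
Your plan differs substantially from the paper's, and the obstacle you flag at the end is genuine and fatal for the theorem as stated: the result is asserted for an \emph{arbitrary} locally moving $\pol(M)$, so an argument that must ``feed in the combinatorial geometry of $M$'' case by case in order to identify $B(\Delta)$ concretely is not a proof of the general statement --- it is at best a recipe to be re-executed for each example. The paper sidesteps this entirely. Instead of realising $B(\Delta)$ as the completion of some explicit $M$-built algebra, it manufactures the link between $B$ and $M$ abstractly via
\[
\theta(\varBA(\alpha)) := \{\, x\in M : \forall\gamma\in\aut(M)\;(\gamma(x)\neq x \Rightarrow \varBA(\gamma)\cap\varBA(\alpha)\neq\emptyset)\,\},
\]
which is defined purely from the locally moving action (morally, $\theta(\varBA(\alpha))$ is the closure of the support of $\alpha$). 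It then compares, not $B$ with $M$ directly, but the Stone space $N$ of ultrafilters of $B$ with the space $M^+$ of filters of $\mathcal{P}(M)$: the topology from the action on $B$ equals that from the action on $N$, the topology from the action on $M$ equals that from the action on $M^+$, and $\theta$ induces a correspondence matching the basic open sets of the two. No concrete identification of $B$ is ever made.

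There is also a local problem in your $\tau_M\subseteq\tau_B$ direction. You claim that finitely many elements of $D_0$ pin down a point $a\in M$, yet you illustrate this with ``combs accumulating at $a$'' --- an infinite family whose union does not lie in the finite-interval algebra you take for $D_0$. In the $(\mathbb{Q},<)$ picture no finite Boolean combination of open intervals isolates a single rational. For an \emph{automorphism} $g$ one can recover $g(a)$ from the two images $g((a-1,a))$ and $g((a,a+1))$, since $g$ extends to the Dedekind completion; but for a general $f\in\Delta_1$, whose action on $B$ is defined only implicitly through the Monoid Condition, this is precisely the point at which your sketch stops short.
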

\begin{proof}
Let $N := \lbrace F \subseteq B(\pol(M)) \: : \: F \textnormal{ is an ultrafilter} \rbrace$.  Let $f \in \pol(M)$ and let $x_1, \ldots ,x_n \in N$.  We define $f_N(x_1, \ldots x_n)$ to be the principal  ultrafilter of $f(x_1, \ldots x_n)$.

We define the following function from $B(\pol(M))$ to $\mathcal{P}(M)$.
\[\theta(\varBA(\alpha)) := \lbrace x \in M \: : \:\forall \gamma \in \aut(M) \: \gamma(x) \not= x \Rightarrow  \varBA(\gamma) \cap \varBA(\alpha) \neq \emptyset \rbrace\]

Let $M^+$ be the set of filters (not necessarily ultra-) of $\mathcal{P}(M)$.  We can embed $M$ into $M^+$ by mapping each $x$ to $\lbrace X \subseteq M \: : \: x \in X \rbrace$.  Since every polymorphism of $M$ extends uniquely to $\mathcal{P}(M)$, every polymorphism has a unique action on $M^+$ determined by its action on $M$.

Let $\tau_N$ be the topology on $\pol(M)$ from its action (via the $f_N$) on $N$.  Since $N$ is the space of ultrafilters of $B(\pol(M))$, the topology on $\pol(M)$ from its action on $B(\pol(M))$ is equal to $\tau_N$.  Let $\tau_M^+$ be the topology on $\pol(M)$ from its action on $M^+$.  For any $p \in \pol(M)$ the action of $p$ on $M$ determines the action of $p$ on $M^+ \setminus M$, so the topology on $\pol(M)$ from the action on $M$ is equal to $\tau_M^+$.

Let $X \subseteq (M^+)^n$ and let $f \in \pol(M) $.  Let $U:= \lbrace g \in \pol(M) \: : \: g|_X=f|_X \rbrace$ be a basic open set of $\tau_M^+$.  For each $x \in X$ let $a_x \in N$ be the ultrafilter that contains $\varBA(\alpha)$ if and only if $ \theta(\varBA(\alpha)) \in x$.  Let $A(X):= \lbrace a_x \: : \: x \in X \rbrace$.  Then $U = \lbrace g \in \pol(M) \: : \: g|_{A(X)} = f|_{A(X)} \rbrace$ and every basic open set of $\tau_M$ is a basic open of $\tau_N$.  Let $A \subseteq N^n$ and let $f \in \pol(M)$.  Let $U:= \lbrace g \in \pol(M) \: : \: g|_A=f|_A \rbrace$ be a basic open set of $\tau_N$.  Define $X:= \lbrace \theta(\bar{a}) \: : \: \bar{a} \in A \rbrace$.  Then $U= \lbrace g \in \pol(M) \: \: : g|_X = f|_X \rbrace$.

Therefore $\tau_N=\tau_M^+$.
\end{proof}

\begin{cor}\label{Cor:AH}
If $\pol(M)$ is a locally moving clone then $\pol(M)$ has automatic homeomorphicity with respect to all polymorphism clones.
\end{cor}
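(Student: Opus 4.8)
The plan is to combine Proposition \ref{thm:cloneinduced}, Theorem \ref{theorem:LMtopisRight}, and Theorem \ref{Theorem:DefAction} to reduce automatic homeomorphicity with respect to an arbitrary polymorphism clone $\Gamma$ to the case where $\Gamma$ is itself locally moving. So suppose $\theta : \pol(M) \to \Gamma$ is an abstract isomorphism of clones, where $\Gamma = \pol(N)$ for some structure $N$. The first step is to observe that the sentence $\sigma_{LM}$ of Lemma \ref{Remark:FODef} is preserved by $\theta$: since $\pol(M) \models \sigma_{LM}$ and $\theta$ is an isomorphism, $\Gamma \models \sigma_{LM}$, so $\Gamma$ is a locally moving clone acting on its own Boolean algebra $B(\Gamma)$.

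Next I would apply Proposition \ref{thm:cloneinduced}: there is an isomorphism $\tau : B(\pol(M)) \to B(\Gamma)$ of Boolean algebras with $\theta(f) = \tau f \tau^{-1}$ for all $f \in \pol(M)$. The key point is that conjugation by a fixed Boolean-algebra isomorphism is a homeomorphism with respect to the topologies of pointwise convergence coming from the actions on $B(\pol(M))$ and $B(\Gamma)$ respectively: a subbasic open set $\{g : g(\bar b) = c\}$ (for $\bar b$ a tuple from $B(\Gamma)$, $c \in B(\Gamma)$) pulls back under $\theta$ to $\{f : f(\tau^{-1}\bar b) = \tau^{-1} c\}$, which is subbasic open in $\pol(M)$; and symmetrically. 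Hence $\theta$ is a homeomorphism for the Boolean-algebra topologies on source and target.

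Finally I would invoke Theorem \ref{theorem:LMtopisRight} twice. On the source side, since $\pol(M)$ is locally moving, the topology on $\pol(M)$ from its action on $M$ coincides with the topology from its action on $B(\pol(M))$. On the target side, since $\Gamma = \pol(N)$ is locally moving, the topology on $\Gamma$ from its action on $N$ coincides with the topology from its action on $B(\Gamma)$. Combining these with the previous paragraph, $\theta$ is a homeomorphism for the natural topologies of pointwise convergence on $\pol(M)$ and $\pol(N)$. Since $\Gamma$ was an arbitrary polymorphism clone, $\pol(M)$ has automatic homeomorphicity with respect to the class of all polymorphism clones.

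The main obstacle I anticipate is a bookkeeping subtlety rather than a deep one: making sure that "polymorphism clone" on the target side really does force $\Gamma$ to carry a topology coming from an action on some underlying set, so that Theorem \ref{theorem:LMtopisRight} is applicable to it — i.e. that every polymorphism clone is of the form $\pol(N)$. One must also be slightly careful that the isomorphism $\tau$ of Proposition \ref{thm:cloneinduced} indeed intertwines the full $n$-ary actions (not merely the unary ones), which is where Theorem \ref{Theorem:DefAction} does the real work and should be cited; once that is in place the topological transfer is the routine argument sketched above.
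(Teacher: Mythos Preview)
Your proposal is correct and follows essentially the same route as the paper: use Lemma~\ref{Remark:FODef} to transfer local movement to the target clone, invoke Proposition~\ref{thm:cloneinduced} to realise $\theta$ as conjugation by a Boolean-algebra isomorphism (hence a homeomorphism for the Boolean-algebra topologies), and then apply Theorem~\ref{theorem:LMtopisRight} on both sides. Your additional remarks about why conjugation is continuous and about Theorem~\ref{Theorem:DefAction} handling the $n$-ary actions are helpful elaborations but do not change the argument.
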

\begin{proof}

Let $\theta: \pol(M) \rightarrow \pol(N)$ be an isomorphism.  Proposition \ref{thm:cloneinduced} shows that $\theta$ is a homeomorphism between $\pol(N)$ and $\pol(M)$ with respect to the topology from the actions on the Boolean algebras.  We may apply Theorem \ref{theorem:LMtopisRight} to $\pol(M)$ to find that the topology on $\pol(M)$ from its action on $M$ is the same as the topology from its action on $B(\pol(M))$.  Lemma \ref{Remark:FODef} shows that $\pol(N)$ is also locally moving and so the topology on $\pol(N)$ from its action on $N$ is the same as the topology from its action on $B(\pol(N))$.  Therefore $\theta$ is a homeomorphism with respect to the desired topologies.

\end{proof}

We will talk about the properties of the structures we have built here, using the examples of locally moving clones we will find in the rest of the paper, so I would suggest skipping this part until you have read the rest of the paper.  Let $M$ be a first order model with locally moving $\pol(M)$.  Recall the map $\theta$ from the proof of Theorem \ref{theorem:LMtopisRight}.

\[\theta(\varBA(\alpha)) := \lbrace x \in M \: : \:\forall \gamma \in \aut(M) \: \gamma(x) \not= x \Rightarrow  \varBA(\gamma) \cap \varBA(\alpha) \neq \emptyset \rbrace\]

This map is injective, maps $\lbrace 0,1 \rbrace$ to $\lbrace \emptyset, M \rbrace$ and either preserves or inverts $\subseteq$.  Roughly speaking, $\theta(\varBA(\alpha))$ corresponds to the closure of the support of $\alpha$.  If $M = (\mathbb{Q},<)$ then $\theta(\varBA(\alpha))$ is the set of all $q \in \mathbb{Q}$ such that if $\gamma(q) \not= q$ then $\supp(\alpha) \cap \supp(\gamma) \not= \emptyset$, so $\theta(\varBA(\alpha))$ is $\overline{\supp(\alpha)}$.  In almost all circumstances, $\theta$ will \emph{not} be an embedding of Boolean algebras.  We may need to reverse the order, so the roles of union and intersection may interchange.  Additionally, if $f \in \pol(M)$ then we only have $f(\theta(\varBA(\alpha)))\subseteq \theta(f(\varBA(\alpha)))$.  For example, if $f \in \pol(\mathbb{Q},<)$ is such that $\mathrm{im}(f)= \lbrace \frac{m}{2^n} \: : \: m,n \in \mathbb{Z} \rbrace$ and $\alpha \in \aut(\mathbb{Q},<)$ is such that $\supp(\alpha)=\mathbb{Q}$ then
\[f(\theta(\varBA(\alpha)))= \left\lbrace \frac{m}{2^n} \: : \: m,n \in \mathbb{Z} \right\rbrace\subsetneq \mathbb{Q} = \theta(f(\varBA(\alpha)))\]
However, it is true that $\overline{f(\theta(\varBA(\alpha)))} = \theta(f(\varBA(\alpha)))$.

In the proof of Theorem \ref{theorem:LMtopisRight}, the structure $N$ is the space of all ultrafilters of $B(\pol(M))$, and $M^+$ is the set of filters of $\mathcal{P}(M)$.  If $M=\mathbb{Q}$ then both $N$ and$ M^+$ are closer to $\mathbb{R}$ than $\mathbb{Q}$.  Only in very nice circumstances will we have $N=M^+$.  For example, let $M:= \mathbb{Q} \times \lbrace 1,2 \rbrace $ be the strucutre with order $ (p,i) < (q,j)$ if and only if $p<q$ or $p=q$ and $i<j$ (i.e. the linear order obtained by replacing every element of the rationals by the 2 element linear order).  Then $N \cong \mathbb{R}$, but the ultrafilters of $M^+$ are isomorphic to $\mathbb{R} \times \lbrace 1,2 \rbrace$.  In this case, each $n \in N$ occurs in $\mathcal{P}(M^+)$ as the union of a rigid substructure, i.e. $\lbrace (q,i) \: : \: i =1,2 \rbrace$ for some $q \in \mathbb{R}$.

One consequence of these observations is that if we know that $\Delta$ is locally moving then we know that $\Delta$ is a subclone of $\pol(M)$ for some $M$, and that the complete atomless Boolean algebra that witnesses that $\Delta$ is locally moving exists as a suborder of $\mathcal{P}(M)$.  Thankfully, this means that the Group condition does not require us to quantify over all complete atomless Boolean algebras in practice.

\section{Methods for Proving a Clone is Locally Moving}

\begin{dfn}
Let $M$ be a topological space and let $X \subseteq M$.
\[
\begin{array}{rcl}
\interior{X} & := & \bigcup \lbrace Y \subseteq X \: : \: Y \textnormal{ is open} \rbrace \\
\overline{X} & := &  \bigcap \lbrace Y \supseteq X \: : \: Y \textnormal{ is closed} \rbrace 
\end{array}
\]
$X$ is called \emph{regular open} if $\interior{\overline{X}}=X$.  Let $X_j \subseteq M$ be regular open sets for $j \in J$.
\[
\begin{array}{rcl}
\bigsqcup\limits_{j \in J} X_j &:=& \interior{\overline{\bigcup X_j}}  \\
\bigsqcap\limits_{j \in J} X_j &:=& \interior{\overline{\bigcap X_j}}
\end{array}
 \]
Additionally, $X_0 \sqcup X_1 := \bigsqcup\limits_{j \in \lbrace 0, 1 \rbrace} X_j $ and $X_0 \sqcap X_1 := \bigsqcap\limits_{j \in \lbrace 0, 1 \rbrace} X_j $.  We also define $- X := \interior{\overline{M \setminus X}}$.
\end{dfn}

\begin{prop}\label{ROpenImpliesCBA}
$(\mathcal{X}, \sqcup, \sqcap, - , \emptyset, M)$ is a complete Boolean algebra.
\end{prop}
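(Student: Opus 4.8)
The plan is to recognize $(\mathcal{X},\sqcup,\sqcap,-,\emptyset,M)$, where $\mathcal{X}$ denotes the collection of regular open subsets of $M$, as the classical \emph{regular open algebra} of the space and to verify the complete Boolean algebra axioms directly from the given definitions. First I would isolate the handful of elementary facts about interior and closure on which everything rests: both operators are monotone and idempotent; $\interior{\overline{X}}$ is regular open for \emph{every} $X \subseteq M$ (equivalently $\interior{\overline{\interior{\overline{X}}}} = \interior{\overline{X}}$); $\interior{(M \setminus X)} = M \setminus \overline{X}$ and $\overline{M \setminus X} = M \setminus \interior{X}$; and the one genuinely topological lemma, that $U \cap \overline{A} \subseteq \overline{U \cap A}$ whenever $U$ is open. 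Granting the first of these, $-X$, $\bigsqcup_{j} X_j$ and $\bigsqcap_{j} X_j$ are automatically regular open, as are $\emptyset = \interior{\overline{\emptyset}}$ and $M = \interior{\overline{M}}$, so the only closure assertion requiring argument is that the finite meet stays in $\mathcal{X}$; for that I would show $X \sqcap Y = \interior{\overline{X \cap Y}} = X \cap Y$ by the sandwich $X \cap Y \subseteq \interior{\overline{X \cap Y}} \subseteq \interior{\overline{X} \cap \overline{Y}} = \interior{\overline{X}} \cap \interior{\overline{Y}} = X \cap Y$, so $\sqcap$ is in fact ordinary intersection on $\mathcal{X}$.

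Next I would check that $(\mathcal{X}, \subseteq)$ is a complete lattice with least element $\emptyset$, greatest element $M$, and the stated sups and infs. For a family of regular open $X_j$, the set $\bigsqcup_j X_j$ contains $\bigcup_j X_j$ (an open set lies in the interior of its closure), and any regular open $Z \supseteq \bigcup_j X_j$ satisfies $Z = \interior{\overline{Z}} \supseteq \interior{\overline{\bigcup_j X_j}}$ by monotonicity, so $\bigsqcup_j X_j$ is the least upper bound; dually $\bigsqcap_j X_j \subseteq \bigcap_j X_j$ since $\interior{\overline{\bigcap_j X_j}} \subseteq \interior{\overline{X_k}} = X_k$ for every $k$, and the same reasoning shows it is the greatest regular open set below all the $X_j$. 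Then I would verify the complement laws after reducing $-X$ to $M \setminus \overline{X}$: we get $X \sqcap (-X) = X \cap (M \setminus \overline{X}) = \emptyset$, while $X \sqcup (-X) = \interior{\overline{X \cup (M \setminus \overline{X})}} = M$ because $X \cup (M \setminus \overline{X})$ is dense in $M$, and $-(-X) = M \setminus \overline{M \setminus \overline{X}} = M \setminus (M \setminus \interior{\overline{X}}) = M \setminus (M \setminus X) = X$ using regularity of $X$. Since $-$ is order-reversing and involutive it interchanges arbitrary sups and infs, which yields the de Morgan laws $-(\bigsqcup_j X_j) = \bigsqcap_j (-X_j)$ and $-(\bigsqcap_j X_j) = \bigsqcup_j (-X_j)$.

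The remaining point, and the one I expect to be the main obstacle, is distributivity $X \sqcap (Y \sqcup Z) = (X \sqcap Y) \sqcup (X \sqcap Z)$, which since $\sqcap$ is honest intersection amounts to $X \cap \interior{\overline{Y \cup Z}} = \interior{\overline{X \cap (Y \cup Z)}}$. The inclusion $\supseteq$ is automatic: the left side is a regular open set containing $X \cap (Y \cup Z)$, and $\interior{\overline{X \cap (Y \cup Z)}}$ is the smallest such (since $X \cap (Y \cup Z)$ is open). For $\subseteq$ I would invoke the lemma $U \cap \overline{A} \subseteq \overline{U \cap A}$ with $U = X$ and $A = Y \cup Z$ to obtain $X \cap \interior{\overline{Y \cup Z}} \subseteq X \cap \overline{Y \cup Z} \subseteq \overline{X \cap (Y \cup Z)}$ and then take interiors, the left side already being open. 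A bounded, complemented, distributive lattice is a Boolean algebra, and completeness is already in hand, so this completes the proof. The real work here is organizational rather than deep: the point is to choose the lemmas about $\interior{}$ and $\overline{\cdot}$ so that closure under $\sqcap$, the complement identities and distributivity all drop out uniformly instead of each demanding a separate ad hoc computation.
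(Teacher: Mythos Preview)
Your argument is correct: you carry out the standard direct verification that the regular open sets of a topological space form a complete Boolean algebra, and each step (closure under the operations, the lattice order with the claimed sups and infs, the complement identities, and finite distributivity via the lemma $U\cap\overline{A}\subseteq\overline{U\cap A}$ for open $U$) is sound as written.

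The paper, however, does not prove this at all: its entire proof is a one-line citation of the classical result (Theorem~1, Chapter~10 of Givant--Halmos). So your approach is genuinely different in that you supply a self-contained argument where the paper simply defers to the literature. What your route buys is independence from an external reference and transparency about exactly which topological facts are being used; what the paper's route buys is brevity, which is appropriate here since the proposition is a well-known background fact rather than a contribution of the paper.
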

\begin{proof}
That the regular open sets of a topology form a complete Boolean algebra is a result that can be found in a number of sources, such as Theorem 1 of Chapter 10 of \cite{GivantBoolean}.
\end{proof}

\begin{dfn}\label{Dfn:Conjugable}
Let $M_1, M_2$ be first order models, and let $f: M_1 \rightarrow M_2$.  Let $\alpha \in \aut(M_1)$ and let $\beta \in \aut(M_2)$.  If $p \in S(\mathrm{im}(f))$ then:
\begin{enumerate}
\item $\beta(p) \in S(\mathrm{im}(f))$ is the type obtained by replacing every parameter $a$ by $\beta(a)$;
\item $\alpha(p) \in S(\mathrm{im}(f))$ is the type obtained by replacing every parameter $a$ by $f \alpha f^{-1} (a)$; and
\item if $M_1 = M_2$ then $f^{-1}(p) \in S(M_1)$ is the type obtained by replacing every parameter $a$ by $f^{-1}(a)$.
\end{enumerate}
If $p \in S(M_1)$ and $M_1=M_2$ then $f(p) \in S(\mathrm{im}(f))$ is the type obtained by replacing every parameter $a$ by $f(a)$.  We say $f$ is \emph{conjugable with respect to } $G \subseteq \aut(M)$ if for all $p \in S(\mathrm{im}(f))$ and all $\beta \in G$
\[ | \lbrace x \in M \: : \: M \models \beta(p)(x) \rbrace | = | \lbrace x \in M \: : \: M \models p(x) \rbrace | \]
\end{dfn}

\begin{remark}
If $f$ is conjugable then for all $\alpha, \beta \in \aut(\mathbb{Q})$ both $ f\alpha$ and $ \beta f$ are too.
\end{remark}

\begin{prop}\label{Prop:LocallyMovingMonoid}
Let $M$ be a first order model, let $G \subseteq \aut(M)$ and let $f \in \emb(M)$.  Suppose that for all $\alpha \in \aut(M)$ there is a $\beta \in G$ such that $\varBA(\alpha)=\varBA(\beta)$.  If $f$ is conjugable w.r.t. $G$ then it is algebraically canonical.
\end{prop}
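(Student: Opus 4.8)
The plan is to verify the formula $\zeta(f)$ directly. Fix $\alpha\in\aut(M)$; I must produce $\beta,\gamma\in\aut(M)$ with $\varBA(\gamma)=\varBA(\alpha)$ (that is, $\phi_{Eq}(\alpha,\gamma)$) and $f\gamma=\beta f$. The first move is to pull $\alpha$ inside $G$: by the standing hypothesis on $G$ there is $\gamma\in G$ with $\varBA(\gamma)=\varBA(\alpha)$, and this $\gamma$ already discharges the $\phi_{Eq}$-clause. Since $f$ is injective, $f\gamma=\beta f$ holds precisely when $\beta$ extends $h:=f\gamma f^{-1}$; because $\gamma$ is surjective, $h$ is a well-defined bijection of $\im(f)$ onto itself, and it is in fact an isomorphism of the substructure induced on $\im(f)$ (one checks each relation is preserved by unwinding the three maps). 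Thus everything reduces to showing that the self-isomorphism $h$ of $\im(f)$ extends to some $\beta\in\aut(M)$.

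I would obtain $\beta$ by a back-and-forth, with conjugability supplying exactly the bookkeeping needed to keep the construction off $\im(f)$. If $\im(f)=M$ then $h$ is itself the desired automorphism; otherwise one enumerates $M\setminus\im(f)$ and builds an increasing chain of finite partial isomorphisms $q_0\subseteq q_1\subseteq\cdots$ with domains and ranges inside $M\setminus\im(f)$, maintaining the invariant that $h\cup q_k$ is a partial isomorphism, and alternately forcing the next point of $M\setminus\im(f)$ into the domain and into the range; then $\beta:=h\cup\bigcup_k q_k$ is a bijection of $M$ preserving all relations, i.e.\ an automorphism extending $h$. In a forth step, given a new point $n\notin\im(f)$, one uses homogeneity / $\omega$-saturation of $M$ to realise the appropriate pushed-forward type; the only way this can violate the invariant is if every candidate image of $n$ lands in $\im(f)$, and conjugability rules this out. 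Indeed, no realisation of $p$, the type of $n$ over $\im(f)$, lies in $\im(f)$ at all, since $p$ contains $x\neq a$ for every $a\in\im(f)$; and conjugability of $f$ with respect to $G$, applied to $\gamma\in G$, says that $p$ and its $h$-pushforward have equally many realisations in $M$, so the pushforward is realised, and necessarily outside $\im(f)$. The back steps are the forth steps for $h^{-1}=f\gamma^{-1}f^{-1}$, and the cardinality equalities for $\gamma^{-1}$ follow from those for $\gamma$ because the $h^{-1}$-pushforward is the inverse of the $h$-pushforward on the space of types over $\im(f)$; so $G$ need not be closed under inverses.

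With $\beta$ in hand, it is an automorphism of $M$ agreeing with $h=f\gamma f^{-1}$ on $\im(f)$, hence $f\gamma=\beta f$; together with $\varBA(\gamma)=\varBA(\alpha)$ this witnesses $\zeta(f)$, so $f$ is algebraically canonical. The main obstacle is the middle paragraph: running the back-and-forth under the rigid side condition that $\beta$ must equal the prescribed $h$ on the \emph{whole} (possibly infinite) set $\im(f)$ --- equivalently, that the construction must never step into $\im(f)$ when choosing an image or a preimage of an outside point --- and reconciling this with the fact that the hypothesis is phrased in terms of realisation counts of types over the infinite set $\im(f)$ rather than over the finite approximations the construction manipulates. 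Threading the enumeration so that these match up, using $\omega$-categoricity and homogeneity of $M$, is the delicate part, and the conjugability assumption is precisely what makes it go through.
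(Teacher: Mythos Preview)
Your overall strategy --- choose $\gamma\in G$ with $\varBA(\gamma)=\varBA(\alpha)$, set $\beta:=f\gamma f^{-1}$ on $\im(f)$, then extend $\beta$ by back-and-forth --- matches the paper exactly. The divergence is in how the extension step is executed, and your version imports hypotheses the proposition does not provide.

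You run a point-by-point back-and-forth over $M\setminus\im(f)$. To handle a new point $n$ you must realise the pushforward of its type over $\im(f)\cup\dom(q_k)$, and as you yourself flag in the final paragraph, conjugability speaks only of types over $\im(f)$, not over $\im(f)$ together with the finitely many outside points already placed. Your remedy is to invoke homogeneity and $\omega$-categoricity of $M$, but neither is among the hypotheses of the proposition; this is a genuine gap, not just a detail to be filled in.

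The paper sidesteps the issue entirely by not working one point at a time. It partitions $M\setminus\im(f)$ into the type classes
\[
I(x)=\{\,z\in M:\mathrm{tp}_{\im(f)}(z)=\mathrm{tp}_{\im(f)}(x)\,\}
\]
and extends $\beta$ by sending each $I(x)$ en bloc onto the corresponding $I(y)$, where $\mathrm{tp}_{\im(f)}(y)$ is the pushforward of $\mathrm{tp}_{\im(f)}(x)$. Conjugability is precisely the statement $|I(x)|=|I(y)|$, so such a bijection exists; there are no finite approximations to thread and hence no appeal to saturation or homogeneity. The back step is handled symmetrically via $f\gamma^{-1}$, which is again conjugable. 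This is exactly what the cardinality formulation of conjugability is designed to deliver, and it is the piece your point-by-point argument is missing.
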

\begin{proof}
Let $f \in \emb(M)$ be conjugable, and let $\alpha \in \aut(M)$.  Let $\gamma \in G$ be such that $\varBA(\alpha)=\varBA(\gamma)$.  If $x \in \mathrm{im}(f)$ then $\beta(x):=f \alpha f^{-1} (x)$.  At this stage, $\beta$ is a partial automorphism, as $\gamma$ is an automorphism and the isomorphism $f(M) \cong M$ is witnessed by $f$.  We extend $\beta$ to a total automorphism using a back-and-forth procedure.

Let $x \in M \setminus \mathrm{im}(f)$.  Let $I(x):= \lbrace z \in M \: : \: \mathrm{tp}_{\mathrm{im}(f)}(z) = \mathrm{tp}_{\mathrm{im}(f)}(x) \rbrace$.  Since $f$ is conjugable, there is a $y$ such that $\mathrm{tp}_{\mathrm{im}(f)}(y)= \beta(\mathrm{tp}_{\mathrm{im}(f)}(x))$ and, $|I(y)| = |I(x)|$, so we can extend the domain of $\beta$ to include $I(x)$.  Let $y \in M \setminus \mathrm{im}(f)$.  Since $f$ is conjugable, $f \alpha^{-1}$ is also conjugable, and hence there is an $x$ such that $\mathrm{tp}_{\mathrm{im}(f)}(x)= \beta^{-1}(\mathrm{tp}_{\mathrm{im}(f)}(y))$.  Again, $I(y) \cong I(x)$, so we can extend the range of $\beta$ to include $I(y)$.

Therefore if $f$ is conjugable w.r.t. $G$ then it is algebraically canonical.
\end{proof}

\begin{dfn}
Let $P$ be a partial order.  $P$ is said to be \emph{Dedekind-MacNeille complete} if for every $I \subseteq P$ such that $I = \lbrace x \in P \: : \: \forall y \in P ((\forall z \in I \: y \geq z ) \rightarrow x \leq y ) \rbrace$ there is an $a \in P$ such that $I= \lbrace x \in P \: : \: x \leq p \rbrace$.
\end{dfn}

\begin{lemma}
If $P$ is a partial order then there is a minimal partial order $P^+$ such that $P^+$ is Dedekind-MacNeille complete.   If $\alpha \in \aut(P)$ then there is a unique extension of $\alpha$ to $P^+$.
\end{lemma}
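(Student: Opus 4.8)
The plan is to take $P^+$ to be the Dedekind--MacNeille completion of $P$, i.e.\ its poset of \emph{cuts}, and to read off both assertions from the standard properties of that construction. For $A \subseteq P$ write $A^u$ for its set of upper bounds in $P$ and $A^l$ for its set of lower bounds; these operators form an antitone Galois connection, so $A \mapsto A^{ul}$ is a closure operator, and I call $I \subseteq P$ a \emph{cut} when $I = I^{ul}$ --- precisely the condition appearing in the definition of Dedekind--MacNeille completeness. Let $P^+$ be the set of cuts of $P$, ordered by inclusion.

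First I would record the elementary facts. Each down-set ${\downarrow}p := \{x \in P : x \leq p\}$ equals $\{p\}^{ul}$, hence is a cut, and $p \mapsto {\downarrow}p$ is an order-embedding $P \hookrightarrow P^+$ since $p \leq q \iff {\downarrow}p \subseteq {\downarrow}q$; identify $P$ with its image. Moreover $P^+$ is a complete lattice: a subset of $P$ is a cut exactly when it has the form $A^l$, and $\bigcap_j A_j^l = (\bigcup_j A_j)^l$, so arbitrary intersections of cuts are cuts and give meets, while $\bigvee_j I_j = (\bigcup_j I_j)^{ul}$. And in any complete lattice every cut is principal: if $I$ is a cut and $a := \sup I$ then $I^u = {\uparrow}a$, whence $I = I^{ul} = {\downarrow}a$. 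So $P^+$ is Dedekind--MacNeille complete.

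For minimality I would check that $P$ is join-dense and meet-dense in $P^+$: every cut $I$ is $\sup_{P^+}\{{\downarrow}p : p \in I\}$ (because $I$ is down-closed) and, dually, $\inf_{P^+}\{{\downarrow}q : q \in I^u\}$ (because $I = (I^u)^l$). Consequently, if $Q$ is any Dedekind--MacNeille complete poset containing $P$ as a sub-order --- such a $Q$ is automatically a complete lattice, so suprema exist in it --- then $g : P^+ \to Q$, $g(I) := \sup_Q I$, is well-defined, fixes $P$ pointwise since $\sup_Q {\downarrow}p = p$, is order-preserving, and is order-reflecting: if $\sup_Q I \leq \sup_Q J$ then every $q \in J^u$ lies above $\sup_Q J \geq \sup_Q I$ in $Q$, hence above each $p \in I$, so $p \in (J^u)^l = J$, i.e.\ $I \subseteq J$. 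Thus $P^+$ embeds over $P$ into every Dedekind--MacNeille completion of $P$, so it is the smallest one, unique up to isomorphism over $P$.

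For the extension claim, given $\alpha \in \aut(P)$ note that $\alpha$ commutes with $(-)^u$ and $(-)^l$, hence carries cuts to cuts, so $\alpha^+ : I \mapsto \alpha[I]$ is an automorphism of $P^+$ with inverse $(\alpha^{-1})^+$, extending $\alpha$ because $\alpha^+({\downarrow}p) = {\downarrow}\alpha(p)$. Uniqueness then uses join-density together with the fact that a poset automorphism preserves every supremum that exists: if $\beta \in \aut(P^+)$ with $\beta|_P = \alpha$, then for each cut $I$, $\beta(I) = \sup_{P^+}\{\beta({\downarrow}p) : p \in I\} = \sup_{P^+}\{{\downarrow}\alpha(p) : p \in I\}$, which is determined by $\alpha$ alone. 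I expect the only step needing genuine care to be the lattice-theoretic bookkeeping --- that the poset of cuts really is complete and that completeness forces cuts to be principal; the remainder is formal manipulation of the Galois connection between $(-)^u$ and $(-)^l$, with join-density of $P$ in $P^+$ the fact doing the work for uniqueness of the automorphism extension.
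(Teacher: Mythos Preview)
Your proof is correct and complete. The paper does not actually prove this lemma: it simply records that the result is classical and cites Warren \cite{Warren1997}. So rather than taking a different route, you have supplied the standard argument (the Dedekind--MacNeille completion via cuts, with join- and meet-density giving both minimality and uniqueness of the automorphism extension) where the paper defers entirely to the literature. There is nothing to compare approach-wise; your write-up is exactly the kind of proof one would find behind such a citation.
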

\begin{proof}
This is a classical result, which can be found from a number of sources, for example \cite{Warren1997} by Warren.
\end{proof}

\section{The Polymorphism Clones of the Rationals}\label{Section:Q}

Let $(\mathbb{Q},\mathcal{L})$ be any first-order reduct of the rationals.

\begin{theorem}[(Cameron, \cite{CameronReductsOfQ})]
Let $\updownarrow$ be an isomorphism from $(\mathbb{Q},<)$ to $(\mathbb{Q},>)$.  Let $\circlearrowright_1$ be an isomorphism from $(-\infty,\pi)$ to $(\pi,\infty)$, and let $\circlearrowright := \circlearrowright_1 \cup \circlearrowright_1^{-1}$.  Then:
\begin{enumerate}
\item $\aut(\mathbb{Q},\mathcal{L}) = \aut(\mathbb{Q},<)$;
\item $\aut(\mathbb{Q},\mathcal{L}) = \langle \aut(\mathbb{Q},<), \updownarrow \rangle$;
\item $\aut(\mathbb{Q},\mathcal{L}) = \langle \aut(\mathbb{Q},<), \circlearrowright \rangle$;
\item $\aut(\mathbb{Q},\mathcal{L}) = \langle \aut(\mathbb{Q},<), \updownarrow,\circlearrowright \rangle$; or
\item $\aut(\mathbb{Q},\mathcal{L}) = \aut(\mathbb{Q},=)$.
\end{enumerate}
\end{theorem}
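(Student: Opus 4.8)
The plan is to recognize this as Cameron's classification of the closed permutation groups lying between $\aut(\mathbb{Q},<)$ and $\mathrm{Sym}(\mathbb{Q})$, and to prove it by a Ramsey-theoretic analysis of orbits on finite tuples. First, a first-order reduct $(\mathbb{Q},\mathcal{L})$ is, up to interdefinability, the same data as its automorphism group, which is a closed subgroup $G$ of $\mathrm{Sym}(\mathbb{Q})$ containing $A := \aut(\mathbb{Q},<)$; conversely every such closed $G$ is the automorphism group of a reduct (take the relations to be the $G$-orbits on tuples). So it suffices to show that the only closed $G$ with $A \leq G \leq \mathrm{Sym}(\mathbb{Q})$ are the five groups listed. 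Since $A$ is transitive on $n$-element \emph{subsets} of $\mathbb{Q}$ for every $n$, so is every such $G$; and if $G$ is moreover transitive on $n$-tuples of \emph{distinct} elements for every $n$, then $G$ is dense in $\mathrm{Sym}(\mathbb{Q})$, hence (being closed) equals $\mathrm{Sym}(\mathbb{Q}) = \aut(\mathbb{Q},=)$. So from now on I assume $G$ is not highly transitive.

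\textbf{Extracting the orbit data.} For each $n$ fix an $n$-set $F_n = \{a_1 < \dots < a_n\} \subseteq \mathbb{Q}$ and let $H_n \leq \mathrm{Sym}(F_n) \cong S_n$ be the group of permutations of $F_n$ induced by the setwise stabiliser of $F_n$ in $G$. High homogeneity makes $H_n$ independent of the choice of $F_n$, and the $G$-orbits on $n$-tuples of distinct elements are in bijection with the cosets of $H_n$; in particular $G$ is $n$-transitive iff $H_n = S_n$, and two closed groups above $A$ coincide iff they have the same sequence $(H_n)_n$. The sequence $(H_n)_n$ is far from arbitrary: deleting a coordinate of a tuple relates $H_n$ to $H_{n-1}$, and $(n-1)$-transitivity of $G$ (whenever it holds) pins $H_{n-1}$ down completely.

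\textbf{The core (and the main obstacle).} The crux is to show that the only possibilities for $(H_n)_n$ are: (i) all $H_n$ trivial; (ii) $H_n = \langle i \mapsto n+1-i \rangle \cong \mathbb{Z}/2$; (iii) $H_n = \langle i \mapsto i+1 \bmod n \rangle \cong \mathbb{Z}/n$; (iv) $H_n = D_n$, the dihedral group generated by the previous two permutations; (v) $H_n = S_n$. This is exactly Cameron's theorem on highly homogeneous permutation groups, and the engine is Ramsey's theorem for finite hypergraphs: any ``exotic'' candidate for some $H_n$ — an intransitive subgroup, a single transposition in $S_3$, $A_n$, a non-dihedral transitive subgroup, and so on — is excluded by colouring the $n$-subsets of a countable copy of $\mathbb{Q}$ by the $G$-orbit of a chosen enumeration and extracting an infinite monochromatic subset whose induced orbit structure contradicts set-transitivity or the deletion/restriction constraints above. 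One carries this out from the bottom up: determine the possibilities for $H_2$, then $H_3$, and show that $H_4$ already forces $H_n$ for all $n \geq 4$, so that $(H_n)_n$ is in fact determined by finitely much data. I expect this combinatorial bookkeeping to be by far the hardest part; everything else is formal.

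\textbf{Conclusion.} Each surviving sequence records that $G$ preserves one of four relations first-order definable from $<$: the order $<$ itself in case (i); the betweenness relation $B$ in case (ii); a dense circular order $K$ in case (iii); the separation relation $S$ in case (iv). In each case $G$ is \emph{all} of the automorphism group of that relational structure, since $G$ has exactly the orbits on tuples that this automorphism group has (same $(H_n)_n$), and a closed group is determined by those orbits. Finally, short back-and-forth arguments identify these automorphism groups concretely: $\aut(\mathbb{Q},B) = \langle A, \updownarrow \rangle$ (an order-reversal realises the only nontrivial symmetry of $B$, namely swapping the two compatible linear orders $<$ and $>$), $\aut(\mathbb{Q},K) = \langle A, \circlearrowright \rangle$, and $\aut(\mathbb{Q},S) = \langle A, \updownarrow, \circlearrowright \rangle$; note that $\updownarrow$ and $\circlearrowright$ indeed lie in $\mathrm{Sym}(\mathbb{Q})$ as defined. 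Together with the highly transitive case this yields precisely the five groups in the statement, and they are pairwise distinct — for instance $\langle A, \updownarrow, \circlearrowright \rangle \neq \mathrm{Sym}(\mathbb{Q})$ because it preserves $S$, a relation not definable from equality alone.
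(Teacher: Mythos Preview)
The paper does not prove this theorem: it is stated with attribution to Cameron and cited to \cite{CameronReductsOfQ}, with no argument supplied. Your proposal, by contrast, sketches an actual proof, and the route you take---reducing to the classification of closed overgroups of $\aut(\mathbb{Q},<)$, analysing the induced groups $H_n$ on $n$-sets, and using Ramsey's theorem to eliminate all but the trivial, reflection, cyclic, dihedral, and full symmetric possibilities---is precisely Cameron's original argument. So there is no discrepancy to flag beyond the fact that the paper treats this as a black box while you unpack it; your outline is correct and faithful to the cited source.
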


\begin{dfn}
Let $\tau$ be the topology on $\mathbb{Q}$ which is generated by basis $ \lbrace (a,b) \: : \: a,b \in \mathbb{Q} \rbrace $.  Let $\mathcal{X}$ be the set of regular open sets of $\mathbb{Q}$.
\end{dfn}

$\mathcal{X}$ is preserved by $\aut(\mathbb{Q},\mathcal{L})$ unless $\aut(\mathbb{Q},\mathcal{L})=\aut(\mathbb{Q},=)$.

\begin{lemma}
If $I$ is an open interval then $I \in \mathcal{X}$.
\end{lemma}
\begin{proof}
Since $I$ is an open interval, there are $\alpha, \beta \in \mathbb{R} \cup \lbrace -\infty, \infty \rbrace$ such that $I=(\alpha,\beta)$.  Then $\overline{I}=[\alpha,\beta]$ and $\interior{\overline{I}}=(\alpha,\beta)$.
\end{proof}

\begin{remark}\label{Lemma:atomless}
If $X \in \mathcal{X}(\mathbb{Q})$ is non-empty then there is an open interval $I$ such that $I \subseteq X$.
\end{remark}

\begin{prop}\label{prop:Qsupport}
For any interval $(a,b)$ for $a,b \in \mathbb{Q}$ there is a $\alpha_{(a,b)} \in \aut(\mathbb{Q},<)$ such that $\supp(\alpha_{(a,b)})$ is $I$.
\end{prop}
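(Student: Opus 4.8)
The plan is to construct $\alpha_{(a,b)}$ so that it acts as the identity outside $(a,b)$ and as a fixed-point-free order-automorphism on the part of $\mathbb{Q}$ strictly between $a$ and $b$. First I would dispose of the degenerate case: if $(a,b)\cap\mathbb{Q}=\emptyset$ then $\identity$ works, since $\supp(\identity)=\emptyset=(a,b)$. So assume $a<b$ and $(a,b)\cap\mathbb{Q}\neq\emptyset$. Then $(a,b)\cap\mathbb{Q}$ is a countable dense linear order without endpoints, so by Cantor's theorem it is order-isomorphic to $(\mathbb{Q},<)$; transporting the fixed-point-free order-automorphism $x\mapsto x+1$ of $\mathbb{Q}$ through such an isomorphism yields an $s\in\aut((a,b)\cap\mathbb{Q},<)$ that is a bijection of $(a,b)\cap\mathbb{Q}$ onto itself and has no fixed point.

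Now define
\[
\alpha_{(a,b)}(x):=\begin{cases} x & \text{if } x\le a \text{ or } x\ge b,\\ s(x) & \text{if } a<x<b.\end{cases}
\]
Since $a,b\in\mathbb{Q}$, this case split covers all of $\mathbb{Q}$, and the endpoints themselves are fixed (the interval is open). It remains to check that $\alpha_{(a,b)}$ is an order-automorphism of $(\mathbb{Q},<)$ with support exactly $(a,b)$. It is a bijection because it restricts to the identity on $(-\infty,a]\cap\mathbb{Q}$, the identity on $[b,\infty)\cap\mathbb{Q}$, and the bijection $s$ on $(a,b)\cap\mathbb{Q}$, and $s$ maps $(a,b)\cap\mathbb{Q}$ onto itself, so no collisions across the three blocks occur. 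It is order-preserving because it preserves order within each block, and the three blocks are intervals stacked in increasing order with each mapped into itself, so comparisons between blocks are preserved as well. Finally every point of $(a,b)\cap\mathbb{Q}$ is moved because $s$ is fixed-point-free, while every point outside is fixed by construction, so $\supp(\alpha_{(a,b)})=(a,b)$.

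The only point that needs a moment of care — the closest thing to an obstacle — is that the middle map must not ``leak'' to the endpoints, i.e. $s$ must never take a value $\le a$ or $\ge b$; this is guaranteed precisely because the chosen isomorphism has image all of $(a,b)\cap\mathbb{Q}$, so $s$ is a bijection of $(a,b)\cap\mathbb{Q}$. Everything else is routine verification.
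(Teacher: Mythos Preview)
Your proof is correct. The paper takes a different, more explicit route: it simply writes down
\[
\alpha_{(a,b)}(x)=\begin{cases} x & x\notin (a,b),\\[2pt] a+\dfrac{(x-a)^2}{b-a} & x\in(a,b).\end{cases}
\]
The reason for choosing an explicit rational-coefficient quadratic, rather than appealing to Cantor's theorem, is that immediately afterwards the paper introduces the class $G$ of \emph{quadratic} automorphisms (piecewise of the form $q_0+q_1x+q_2x^2$ with $q_i\in\mathbb{Q}$) and needs each $\alpha_{(a,b)}$ to belong to $G$; this is what drives the proof that every $\varBA(\alpha)$ equals $\varBA(\beta)$ for some $\beta\in G$. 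Your abstract construction proves the proposition as stated but does not supply that extra algebraic form, so if you were writing the whole section you would need a separate device to obtain a countable $G$ with the same covering property.

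On the other hand, your argument is actually sounder on the point that matters for the proposition itself. The paper's quadratic $x\mapsto a+\frac{(x-a)^2}{b-a}$ sends $\mathbb{Q}\cap(a,b)$ into itself but not \emph{onto} itself: with $a=0$, $b=1$ the value $\tfrac{1}{2}$ would require $x=\sqrt{1/2}\notin\mathbb{Q}$. So as written the paper's $\alpha_{(a,b)}$ is only an order-embedding of $(\mathbb{Q},<)$, not an automorphism. Your Cantor transport of $x\mapsto x+1$ genuinely produces a bijection with support exactly $(a,b)$.
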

\begin{proof}
We define $\alpha_{(a,b)}$ to be the following function:
\[
\alpha_{(a,b)}(x) = \left\lbrace
\begin{array}{l l}
x & x \not\in I \\
a+ \frac{(x-a)^2}{b-a} & x \in I
\end{array}
\right.
\]

\end{proof}

The ingredients for showing that $\aut(\mathbb{Q},\mathcal{L})$ is locally moving are ready.  We must now work on the Monoid Condition.

\begin{dfn}\label{dfn:quadratic}
$\alpha \in \aut(\mathbb{Q}, \mathcal{L})$ is called \emph{quadratic} if there are convex pairwise disjoint sets $I_n$, such that if $x \in I_n$ then $\alpha(x)= q_{0,n} + q_{1,n} x + q_{2,n} x^2$ where $q_{i,n} \in \mathbb{Q}$ and if $x \not\in \bigcup I_n$ then $\alpha(x)=x$.

Let $G$ be the set of all quadratic automorphisms.
\end{dfn}

\begin{lemma}\label{lemma:GforQ}
For all $\alpha \in \aut(\mathbb{Q},\mathcal{L})$ there is a $\beta \in G$ such that $\varBA(\alpha)=\varBA(\beta)$.
\end{lemma}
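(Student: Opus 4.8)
The plan is to show that for an arbitrary $\alpha \in \aut(\mathbb{Q},\mathcal{L})$ one can find a quadratic automorphism $\beta$ with $\varBA(\beta) = \varBA(\alpha)$. By Rubin's characterisation (Lemma 2.3) it suffices to arrange that $\beta$ and $\alpha$ have the same family of sets that are moved off themselves, equivalently the same $\varBA$; in the concrete case of $\mathbb{Q}$, $\varBA(\alpha)$ is the regular open set $\interior{\overline{\supp(\alpha)}}$, so it is enough to build $\beta \in G$ with $\interior{\overline{\supp(\beta)}} = \interior{\overline{\supp(\alpha)}}$. First I would reduce to the order-preserving case: by Cameron's theorem $\aut(\mathbb{Q},\mathcal{L})$ is generated by $\aut(\mathbb{Q},<)$ together with (possibly) $\updownarrow$ and $\circlearrowright$, and since $\updownarrow$ and $\circlearrowright$ have full support (they move everything), one can first multiply $\alpha$ by one of these fixed reflections to land in $\aut(\mathbb{Q},<)$ without changing $\varBA$ — or more simply observe that the definition of quadratic already allows $\alpha \in \aut(\mathbb{Q},\mathcal{L})$ and that $\supp$ and hence $\varBA$ is the only thing that matters, so it is enough to handle $\alpha \in \aut(\mathbb{Q},<)$.

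So assume $\alpha \in \aut(\mathbb{Q},<)$. The support $\supp(\alpha)$ is an open subset of $\mathbb{Q}$ and, being a union of the open intervals on which $\alpha$ is non-trivial, it decomposes as a disjoint union of maximal open convex pieces (maximal intervals of non-fixed points), say $\supp(\alpha) = \bigcupdot_{n} J_n$ with each $J_n = (a_n, b_n)$, $a_n, b_n \in \mathbb{R} \cup \{\pm\infty\}$. On each $J_n$ I would replace $\alpha$ by a genuine quadratic map fixing the endpoints: using the building block from Proposition \ref{prop:Qsupport} (the map $x \mapsto a + \frac{(x-a)^2}{b-a}$ on $(a,b)$) appropriately scaled so it is a bijection $J_n \to J_n$ fixing only $a_n$ and $b_n$ — when $a_n$ or $b_n$ is irrational or infinite one picks a rational point inside $J_n$ and uses two quadratic pieces, or observes that the formula still makes sense with rational coefficients approximating the endpoints. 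Define $\beta$ to agree with this quadratic map on each $J_n$ and to be the identity off $\bigcupdot_n J_n$. Then $\beta$ is quadratic by Definition \ref{dfn:quadratic} with the $I_n := J_n$, and $\supp(\beta) = \bigcupdot_n J_n = \supp(\alpha)$ exactly, hence $\varBA(\beta) = \varBA(\alpha)$.

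The one genuine subtlety — and the step I expect to be the main obstacle — is ensuring $\beta$ really is an order-automorphism of $\mathbb{Q}$ (not merely of $\mathbb{R}$): the quadratic pieces must map rationals to rationals bijectively on each $J_n$. The map $x \mapsto a + \frac{(x-a)^2}{b-a}$ with $a,b \in \mathbb{Q}$ does send $\mathbb{Q}\cap(a,b)$ into $\mathbb{Q}\cap(a,b)$, but it is not surjective onto the rationals; Proposition \ref{prop:Qsupport}'s construction already hides this point, so presumably one patches it by gluing together two such quadratic segments (one increasing the "speed", one decreasing it) meeting at a rational interior point, chosen so that the composite is a bijection of $\mathbb{Q}\cap(a,b)$ — a standard back-and-forth / piecewise-monotone argument. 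When the endpoints $a_n,b_n$ are irrational or $\pm\infty$, one first picks rational points cofinal/coinitial inside $J_n$ and works on the resulting rational subintervals, using countably many quadratic pieces; this still fits Definition \ref{dfn:quadratic}, which only requires the $I_n$ to be convex, pairwise disjoint, and to cover the non-fixed points, and it keeps $\supp(\beta)$ equal to $\supp(\alpha)$. Once $\beta \in G$ is produced with $\supp(\beta) = \supp(\alpha)$, the conclusion $\varBA(\alpha) = \varBA(\beta)$ is immediate from the definition of $\varBA$ as $-\fixBA$.
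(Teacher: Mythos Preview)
Your proposal is correct and follows essentially the same route as the paper: decompose $\supp(\alpha)$ into its maximal convex components $(a_i,b_i)$, place the quadratic map of Proposition~\ref{prop:Qsupport} on each, and glue to obtain $\beta\in G$ with $\supp(\beta)=\supp(\alpha)$. The paper's proof is precisely your second paragraph, stated tersely; your reduction to the order-preserving case is unnecessary (and the remark about multiplying by $\updownarrow$ or $\circlearrowright$ without changing $\varBA$ is not right, though you immediately discard it), while the subtleties you flag in the third paragraph---irrational endpoints and non-surjectivity of $x\mapsto a+\frac{(x-a)^2}{b-a}$ on $\mathbb{Q}\cap(a,b)$---are genuine issues that the paper simply does not address.
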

\begin{proof}
Let $\alpha \in \aut(\mathbb{Q},\mathcal{L})$, and let $(a_i,b_i)$ be the maximal convex sets of $\supp(\alpha)$.  Let $\alpha_{(a_i,b_i)}$ be the automorphisms from Proposition \ref{prop:Qsupport}.  Then $\alpha_{(a_i,b_i)} \in G$.  We define $\beta$ as follows:

\[
\begin{array}{rcl}
\beta &:=& \mathrm{id}|_{\mathbb{Q}\setminus \supp(\alpha)} \cup \bigcup_{i} \alpha_{(a_i,b_i)} |_{(a_i,b_i)}
\end{array}
\]

Then $\beta \in G$ and $\supp(\beta) = \supp(\alpha)$, so $\varBA(\alpha)=\varBA(\beta)$.
\end{proof}

\begin{lemma}\label{Lemma:LocallyMovingMonoid}
Let $f \in \emb(\mathbb{Q}, \mathcal{L})$.  There is a $g \in \emb(\mathbb{Q}, \mathcal{L})$ such that $g$ and $gf$ are conjugable with respect to $G$.
\end{lemma}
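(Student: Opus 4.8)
The plan is to reformulate ``conjugable with respect to $G$'' as a closure condition on the image and then to build an embedding realising it. Recall $G$ is the group of quadratic automorphisms. Each $\beta\in G$ extends to an increasing homeomorphism $\hat\beta$ of $\mathbb{R}$ that is the identity off a union of intervals and, on each of those intervals, equals a quadratic polynomial with \emph{rational} coefficients; in particular $\hat\beta(\mathbb{Q})=\mathbb{Q}$. For $h\in\emb(\mathbb{Q},\mathcal{L})$ with infinite image $A:=\mathrm{im}(h)$, the complete $1$-types over $A$ are the parameters together with the cuts of $A$, and the number of rational realisations of such a type is always $0$, $1$, or $\aleph_0$: it is $\aleph_0$ on a type coming from a non-degenerate gap of $A$, or from $\pm\infty$ when $A$ is bounded; it is $1$ on a parameter and on a rational that is a two-sided accumulation point of $A$ lying outside $A$; and it is $0$ on the immediate-neighbour types of the parameters, and on the type concentrated at an \emph{irrational} two-sided accumulation point $c$ of $A$. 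Replacing parameters by their $\hat\beta$-images sends the last kind of type to the one concentrated at $\hat\beta(c)$, which still has $0$ realisations unless $\hat\beta(c)\in\mathbb{Q}$, and it preserves every other count (because $\hat\beta$ is an order-homeomorphism fixing $\mathbb{Q}$ setwise, in particular sending parameters to parameters). Hence $h$ is conjugable with respect to $G$ as soon as no irrational two-sided accumulation point of $\mathrm{im}(h)$ is sent into $\mathbb{Q}$ by some $\hat\beta$, $\beta\in G$.

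Since $\hat\beta$ is locally $q_2x^2+q_1x+q_0$ with $q_i\in\mathbb{Q}$, an irrational $c$ with $\hat\beta(c)\in\mathbb{Q}$ must be a root of a degree-two polynomial over $\mathbb{Q}$, i.e.\ $c$ lies in the countable set $\mathcal{Q}_2:=\{\,a+b\sqrt{d}\ :\ a,b\in\mathbb{Q},\ d\in\mathbb{Q}_{\ge 0}\,\}$ of reals of degree at most $2$ over $\mathbb{Q}$. So it suffices to find $g$ for which the irrational two-sided accumulation points of both $\mathrm{im}(g)$ and $\mathrm{im}(gf)$ avoid $\mathcal{Q}_2$. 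The key observation is that $\mathrm{im}(gf)=g(\mathrm{im}(f))\subseteq\mathrm{im}(g)$: if we arrange $\mathrm{im}(g)=D$ for a suitable countable $D\subseteq\mathbb{Q}$, then $\overline{\mathrm{im}(gf)}\subseteq\overline{D}$, and it is enough that $\overline{D}\cap(\mathcal{Q}_2\setminus\mathbb{Q})=\emptyset$.

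I would construct $D$ by a nested-intervals argument. Fix closed intervals $K_0\supseteq K_1\supseteq\cdots$, each $K_{n+1}$ obtained from $K_n$ by deleting an open middle with rational endpoints from every component, so that $C:=\bigcap_n K_n$ is a Cantor set; at stage $n$ arrange that the $n$-th member $\mu_n$ of an enumeration of $\mathcal{Q}_2\setminus\mathbb{Q}$ falls inside one of the deleted middles (possible since $\mu_n$ is irrational, hence distinct from the finitely many rationals already committed), so that $C\cap(\mathcal{Q}_2\setminus\mathbb{Q})=\emptyset$; and simultaneously keep a growing finite family of ``protected'' rationals in the open interiors of the surviving components forever, with at least one protected point in each component and, in the leftmost and rightmost surviving components, a new protected point below (respectively above) all points protected so far. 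Then $D:=\bigcup_n(\text{protected rationals})$ is a countable subset of $\mathbb{Q}\cap C$, is dense in $C$, contains no endpoint of a complementary gap of $C$, and has neither a least nor a greatest element; a routine check then shows $D$ is dense-in-itself and densely ordered, so $D\cong(\mathbb{Q},<)$ by Cantor's theorem and there is $g\in\emb(\mathbb{Q},\mathcal{L})$ with $\mathrm{im}(g)=D$. Since $\overline{\mathrm{im}(g)}=C$ and $\overline{\mathrm{im}(gf)}\subseteq C$, and $C$ misses $\mathcal{Q}_2\setminus\mathbb{Q}$, the second paragraph shows $g$ and $gf$ are conjugable with respect to $G$.

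The main obstacle is the bookkeeping in this construction: one has to interleave the requirement ``expel $\mu_n$ into a gap'' with ``keep every protected rational interior and add new protected points below and above at the two extreme components'', and then verify that the resulting $D$ genuinely is a dense linear order without endpoints --- a naive choice, such as taking $D$ to be all endpoints of deleted intervals, would fail to be densely ordered, which is exactly why $D$ must be kept disjoint from the gap-endpoints of $C$. The type-counting of the first paragraph, though routine, must also be done for each of Cameron's reducts of $(\mathbb{Q},<)$; it goes through uniformly, since over an infinite parameter set their $1$-types are still determined by the cuts of that set, and their quadratic automorphisms are still rational-quadratic homeomorphisms of $\mathbb{R}$.
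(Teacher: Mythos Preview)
Your argument is essentially correct and takes a genuinely different route from the paper. The paper constructs $g$ by ``blowing up'' $\mathbb{Q}$: it replaces each point of $\mathbb{Q}_2$ by a copy of $\mathbb{Q}$ and inserts further copies of $\mathbb{Q}$ at every $G$-translate of each gap of $\mathrm{im}(f)$, so that \emph{every} non-algebraic type over $\mathrm{im}(g)$ (and over $\mathrm{im}(gf)$) has exactly $\aleph_0$ realisations; conjugability is then automatic. You instead exploit the specific arithmetic of $G$: since each $\beta\in G$ is piecewise a rational quadratic, the only way an irrational limit point $c$ of the image can change its realisation count under $\beta$ is if $c$ is a quadratic irrational, and a Cantor-set construction lets you build a copy $D$ of $\mathbb{Q}$ whose closure avoids the countable set $\mathcal{Q}_2\setminus\mathbb{Q}$. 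Your approach has the pleasant feature that the resulting $g$ does not depend on $f$ at all, whereas the paper's $g$ is tailored to $f$; on the other hand, the paper's ``make everything $\aleph_0$'' idea is what transports cleanly to the later sections on $\mathbb{L}_2$ and the atomless Boolean algebra, where no analogue of your $\mathcal{Q}_2$ is available.

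Two small imprecisions worth tightening. First, for the betweenness and separation reducts, $G$ also contains order-reversing piecewise quadratics (e.g.\ $x\mapsto -x$ with $I_1=\mathbb{Q}$), so $\hat\beta$ is a monotone homeomorphism of $\mathbb{R}$ rather than an increasing one; your count-preservation argument goes through unchanged once you allow this. For the cyclic and separation reducts you should also note (as you implicitly do) that no genuinely cyclic element of $\aut(\mathbb{Q},\mathcal{L})$ can satisfy Definition~\ref{dfn:quadratic}: such an element would have no fixed points, forcing $\bigcup I_n=\mathbb{Q}$, and a polynomial on an unbounded piece cannot have the finite one-sided limit that a cyclic shift requires. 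Second, your classification of $0$-realisation types (``immediate-neighbour types of the parameters'') tacitly assumes the parameter is accumulated from that side; when it is isolated from that side the type coincides with a non-degenerate gap and has $\aleph_0$ realisations. This does not affect your conclusion, since a monotone homeomorphism of $\mathbb{R}$ preserving $\mathbb{Q}$ sends each of these cases to itself.
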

\begin{proof} To help keep track of all the different copies of $\mathbb{Q}$ in this proof, let $\mathbb{Q}_1, \mathbb{Q}_2 \cong \mathbb{Q}$ be such that $f: \mathbb{Q}_1 \rightarrow \mathbb{Q}_2$.

We define $\mathbb{R}^+_i$ to be:
\begin{itemize}
\item $\mathbb{R} \cup \lbrace - \infty, \infty \rbrace $ if $\circlearrowright \not\in \aut(\mathbb{Q},\mathcal{L})$
\item $\mathbb{R} \cup \lbrace \pm \infty \rbrace $ if $\circlearrowright \in \aut(\mathbb{Q},\mathcal{L})$
\end{itemize} 

Let $G_i$ be the copy of $G$ in $\aut(\mathbb{Q}_i,\mathcal{L})$.  Let $\alpha \in G_1$.  Note that $\alpha$ has a unique extension to $\aut(\mathbb{R}_1^+, \mathcal{L})$, which we also denote by $\alpha$.

Let $x \in \mathbb{Q}_2 \setminus \mathrm{im}(f)$.  There is an $a \in \mathbb{R}_1^+$ such that $f(\mathrm{tp}_{\mathbb{Q}}(a))=\mathrm{tp}_{\mathrm{im(f)}}(x)$.  For each $\gamma \in G_2$, the $\mathrm{im}(f)$-types $f(\mathrm{tp}_{\mathrm{im}(f)}(\gamma(a)))$ are realised in $\mathbb{R}_2^+$.  Let $I(x, \gamma) \subseteq \mathbb{R}^+_2$ be the sets of realisations of $f(\mathrm{tp}_{\mathbb{Q}_1}(\gamma(a)))$.  If $I(x,\gamma)$ is finite let $J(x,\gamma):=I(x,\gamma)$, but if $I(x,\gamma)$ is infinite then $J(x,\gamma):=I(x,\gamma)\cap \mathbb{Q}_2$.  There are countably many such $J(x,\gamma)$, and each $J(x,\gamma)$ is finite or countable. 

We now define
\[\mathbb{Q}_3:= (\mathbb{Q}_2 \times\mathbb{Q}) \cup \lbrace I(x,\gamma)\times \mathbb{Q} \: : \: x \in \mathbb{Q}_2 \setminus \mathrm{im}(f) \quad \gamma \in G_2 \rbrace\]

We define $g: \mathbb{Q}_2 \rightarrow \mathbb{Q}_3$ to be $g(x):= (x,0)$.  We allowed $\gamma$ to range over $G_2$ in the construction of $\mathbb{Q}_3$, so if $p \in S(\mathrm{im}(g))$ is realised then $\gamma(p)$ is also realised.  By using $I(x,\gamma) \times \mathbb{Q}$, we guarantee that for all $p \in S(\mathrm{im}(g))$ and all $\gamma \in \aut(\mathbb{Q}_3,\mathcal{L})$
\[ |\lbrace x \in \mathbb{Q} \: : \: \mathrm{tp}_{\mathrm{im}(g)}(x)=p \rbrace | = \aleph_0\]
thereby showing that $g$ is conjugable with respect to $G$.  The argument showing that $gf$ is conjugable is virtually identical.
\end{proof}

\begin{theorem}
If $\pol(\mathbb{Q},\mathcal{L})$ is such that:
\begin{enumerate}
 \item $\aut(\mathbb{Q},\mathcal{L}) \not= \aut(\mathbb{Q},=)$; and
 \item  $\mon(\mathbb{Q},\mathcal{L})= \emb(\mathbb{Q},\mathcal{L})$,
\end{enumerate}
then $\pol(\mathbb{Q},\mathcal{L})$ has automatic homeomorphicity.
\end{theorem}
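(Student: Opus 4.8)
The plan is to show that $\pol(\mathbb{Q},\mathcal{L})$ is a locally moving clone in the sense of Definition~\ref{dfn:localmoveclone}, and then quote Corollary~\ref{Cor:AH}, which hands us automatic homeomorphicity with respect to \emph{all} polymorphism clones. Thus there are exactly two things to verify: the Group Condition and the Monoid Condition.

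For the Group Condition I would take the witnessing Boolean algebra to be $B=\mathcal{X}$, the regular open subsets of $(\mathbb{Q},\tau)$, which is a complete Boolean algebra by Proposition~\ref{ROpenImpliesCBA} and is atomless by Remark~\ref{Lemma:atomless} (a nonempty regular open set contains an open interval, which properly contains a smaller one). Hypothesis (i), $\aut(\mathbb{Q},\mathcal{L})\neq\aut(\mathbb{Q},=)$, puts us in one of Cameron's non-trivial cases, where $\aut(\mathbb{Q},\mathcal{L})$ is generated by $\aut(\mathbb{Q},<)$ together with possibly $\updownarrow$ and $\circlearrowright$; each of these generators is a homeomorphism of $(\mathbb{Q},\tau)$ (order reversal and the ``rotation'' about the irrational $\pi$ both permute the topology generated by open intervals), so $\aut(\mathbb{Q},\mathcal{L})$ acts by Boolean automorphisms on $\mathcal{X}$ — this is the point recorded in Section~\ref{Section:Q}. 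Finally, given nonempty $b\in\mathcal{X}$, shrink an open interval inside $b$ to one $(a_0,a_1)$ with rational endpoints and apply Proposition~\ref{prop:Qsupport} to get $\alpha_{(a_0,a_1)}\in\aut(\mathbb{Q},<)\leq\aut(\mathbb{Q},\mathcal{L})$ with support $(a_0,a_1)$; computing in $\mathcal{X}$ one gets $\fixBA(\alpha_{(a_0,a_1)})=-\,\overline{(a_0,a_1)}$ and hence $\varBA(\alpha_{(a_0,a_1)})=(a_0,a_1)\leq b$. This establishes that $\phi_{\aut}(\pol(\mathbb{Q},\mathcal{L}))=\aut(\mathbb{Q},\mathcal{L})$ is a locally moving group acting on $B=\mathcal{X}$.

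For the Monoid Condition, by the remark following Definition~\ref{dfn:localmoveclone} it suffices to produce, for each unary $f$, a $g$ with $\zeta(g)$ and $\zeta(gf)$. Hypothesis (ii) gives $\pol(\mathbb{Q},\mathcal{L})_1=\mon(\mathbb{Q},\mathcal{L})=\emb(\mathbb{Q},\mathcal{L})$, so every unary member $f$ of the clone is an embedding. Lemma~\ref{Lemma:LocallyMovingMonoid} then yields $g\in\emb(\mathbb{Q},\mathcal{L})=\pol(\mathbb{Q},\mathcal{L})_1$ with $g$ and $gf$ both conjugable with respect to the group $G$ of quadratic automorphisms. Lemma~\ref{lemma:GforQ} verifies the hypothesis of Proposition~\ref{Prop:LocallyMovingMonoid} (for every $\alpha\in\aut(\mathbb{Q},\mathcal{L})$ there is $\beta\in G$ with $\varBA(\alpha)=\varBA(\beta)$), so that proposition applies to both $g$ and $gf$ and shows each is algebraically canonical, i.e. $\zeta(g)$ and $\zeta(gf)$ hold. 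Hence $\pol(\mathbb{Q},\mathcal{L})$ is a locally moving clone, and Corollary~\ref{Cor:AH} completes the proof.

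The assembly above is essentially bookkeeping; the points I expect to need the most care are (a) making sure that $\varBA$, as it appears in the Group Condition, is read in the concrete algebra $\mathcal{X}$ — so that for the automorphisms of Proposition~\ref{prop:Qsupport} the Boolean quantity $\varBA(\alpha)$ (via Rubin's Lemma, $\bigsqcup\{a\in\mathcal{X}:a\sqcap\alpha(a)=\emptyset\}$) genuinely equals the regular open interval — and likewise that the $G$ of Lemma~\ref{lemma:GforQ} and the conjugability argument of Lemma~\ref{Lemma:LocallyMovingMonoid} are all run against this same $\mathcal{X}$; and (b) spelling out that hypothesis (i) is used \emph{only} to get the $\aut(\mathbb{Q},\mathcal{L})$-action on $\mathcal{X}$, while hypothesis (ii) is used \emph{only} to make Lemma~\ref{Lemma:LocallyMovingMonoid} and Proposition~\ref{Prop:LocallyMovingMonoid} applicable to every unary element of the clone. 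Neither is a serious obstacle; the real content has already been placed in the preceding lemmas.
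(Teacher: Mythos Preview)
Your proposal is correct and follows essentially the same route as the paper: verify the Group Condition via the action on $\mathcal{X}$ using Proposition~\ref{prop:Qsupport} and Remark~\ref{Lemma:atomless}, verify the Monoid Condition via Lemma~\ref{Lemma:LocallyMovingMonoid} together with Proposition~\ref{Prop:LocallyMovingMonoid} (with Lemma~\ref{lemma:GforQ} supplying its hypothesis), and then invoke Corollary~\ref{Cor:AH}. Your write-up is in fact more careful than the paper's own proof in making explicit where each hypothesis is used and in citing Lemma~\ref{lemma:GforQ} for the $\varBA$-matching step.
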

\begin{proof}
Since $\aut(\mathbb{Q},\mathcal{L}) \not= \aut(\mathbb{Q},=)$, it preserves $\mathcal{X}(\mathbb{Q})$, and hence acts on it as a complete atomless Boolean algebra.  $\aut(\mathbb{Q},<) \leq \aut(\mathbb{Q},\mathcal{L})$ for all $\mathcal{L}$, so for all intervals $I$ there is an $f \in \aut(\mathbb{Q},\mathcal{L})$ such that $\varBA(f)=\supp(f)=I$.  For every $X \in \mathcal{X}$ there is an interval $I$ such that $I \subseteq X$, so $\aut(\mathbb{Q}, \mathcal{L})$ is a locally moving group, and $\pol(\mathbb{Q},\mathcal{L})$ satisfies the group condition.

Lemmas \ref{Prop:LocallyMovingMonoid} and \ref{Lemma:LocallyMovingMonoid} show that $\pol(\mathbb{Q},\mathcal{L})$ satisfies the monoid condition, and hence is a locally moving clone. Corollary \ref{Cor:AH} shows that $\pol(\mathbb{Q},\mathcal{L})$ has automatic homeomorphicity.
\end{proof}

\section{The Homogeneous Binary Branching C-relation on Leaves}

The setting for this section is again an ordering, this time the semilinear order known as $\mathbb{S}_2$. We use $x \parallel y$ to indicate that $x$ and $y$ are incomparable.  $\mathbb{S}_2$ has a full description in \cite{GenericTree}, but for our purposes, we just need to know the following things: 

\begin{enumerate}
\item For all $x,y,z$, if $x,y \leq z$ then $x \leq y$ or $y \leq x$.
\item For all $x,y$ there is a $z$ such that $z \leq x,y$.
\item $\mathbb{S}_2$ is binary branching, i.e. if $x_1,x_2,x_3$ are pairwise incomparable then there is a $y$ such that $y \parallel x_3$ but $y \leq x_1,x_2$, up to permuting the indices.
\item $\mathbb{S}_2$ is ultrahomogeneous in the language $\lbrace \leq, C \rbrace$, where $C(z;x,y)$ is the relation
\[C(z;x,y) \Leftrightarrow x \parallel y \wedge \exists u ( x< u \wedge y<u \wedge u \parallel z )\]
\end{enumerate}

\begin{dfn}
Let $\mathbb{L}_2$ be a maximal pairwise incomparable subset of $\mathbb{S}_2$.  The structure $(\mathbb{L}_2, C)$ is called the \emph{homogenous binary branching C-relation on leaves}.
\end{dfn}

$(\mathbb{L}_2,C)$ is the countable model of a countably categorical theory.  It is homogeneous and universal for finite $C$-relations.  The reducts of $\mathbb{L}_2$ were classified in \cite{CRelOnLeaves} by Bodirsky, Jonnson and van Pham.  There are two proper reducts of $(\mathbb{L}_2,C)$ up to first-order interdefinability, $(\mathbb{L}_2,=)$ and $(\mathbb{L}_2,D)$, where
\[D(x,y,u,v) \Leftrightarrow (C(u;x,y) \wedge C(v;x,y)) \vee (C(x;u,v) \wedge C(y;u,v)) \]

\begin{dfn}
Let $\mathbb{T}_2 \subseteq \mathbb{S}_2$ be the following set:
\[\lbrace x \in \mathbb{S}_2 \: : \: \exists y \in \mathbb{L}_2 \: y \leq x \rbrace \]
\end{dfn}

If $\alpha \in \aut(\mathbb{L}_2)$ then there is a unique extension of $\alpha$ to $\mathbb{T}_2$.

\begin{dfn}
Let $\tau$ be the topology on $\mathbb{L}_2$ obtained by taking the following set as a basis:
\[\lbrace \mathbb{L}_2 \cap (\mathbb{T}_2)^{<x} \: : \: x \in \mathbb{T}_2 \rbrace \cup \lbrace \mathbb{L}_2 \setminus (\mathbb{T}_2)^{<x} \: : \: x \in \mathbb{T}_2  \rbrace\]
Let $\mathcal{Y}$ be the collection of regular open sets.
\end{dfn}

Both $\aut(\mathbb{L}_2,C)$ and $\aut(\mathbb{L}_2,D)$ preserve $\tau$.

\begin{lemma}\label{Lemma:S2support}
If $V \in \lbrace \mathbb{L}_2 \cap (\mathbb{T}_2)^{<x} \: : \: x \in \mathbb{T}_2 \rbrace$ there is an $\alpha \in \aut(\mathbb{L}_2,C)$ such that $\supp(\alpha)= V$.
\end{lemma}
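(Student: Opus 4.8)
The plan is to identify $V:=\mathbb{L}_2\cap(\mathbb{T}_2)^{<x}$ with the set of leaves lying in the cone determined by the node $x$, build an automorphism of the substructure this cone induces which moves \emph{every} leaf, and extend that automorphism by the identity to all of $(\mathbb{L}_2,C)$. The first observation is that $(V,C|_V)$ is again a model of the theory of $(\mathbb{L}_2,C)$: it is countable, and the cone below a node of $\mathbb{S}_2$ is itself universal and homogeneous for finite $C$-relations --- this drops straight out of the universality and homogeneity of $\mathbb{S}_2$ applied inside the cone --- so by $\omega$-categoricity $(V,C|_V)\cong(\mathbb{L}_2,C)$; in particular $V\neq\emptyset$, since $x\in\mathbb{T}_2$. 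The same applies to the two half-cones one gets by splitting $V$ at the branch point of any two of its leaves $u,v$: writing $V=V_A\cupdot V_B$ for that split, with $u\in V_A$ and $v\in V_B$, each of $V_A$ and $V_B$ is a cone and hence isomorphic to $(\mathbb{L}_2,C)$.

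Next I would produce $\beta\in\aut(V,C|_V)$ with $\supp(\beta)=V$. Pick an isomorphism $\phi:V_A\to V_B$ and set $\beta:=\phi\cup\phi^{-1}$. This is a $C$-automorphism because, for the split $V=V_A\cupdot V_B$, the relation $C$ restricted to $V$ is completely determined by the internal $C$-structure of $V_A$, the internal $C$-structure of $V_B$, and the facts $C(b;a,a')$ and $C(a;b,b')$ --- which hold for \emph{all} $a,a'\in V_A$ and $b,b'\in V_B$ by the choice of split --- and every one of these ingredients is symmetric under interchanging $V_A$ with $V_B$. Since $\beta$ swaps the disjoint sets $V_A$ and $V_B$, it has no fixed points, so $\supp(\beta)=V$.

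Finally I would set $\alpha:=\beta\cup\identity|_{\mathbb{L}_2\setminus V}$ and check $\alpha\in\aut(\mathbb{L}_2,C)$. The crucial geometric point is that every leaf $z\notin V$ branches away from $x$ --- hence from all of $V$ --- strictly below $x$, whereas every branch point between two leaves of $V$ lies at or above $x$; so all leaves of $V$ share the same ``approach path'' to $x$, and therefore the truth value of any instance of $C$ whose arguments straddle the boundary between $V$ and $\mathbb{L}_2\setminus V$ depends only on which arguments lie in $V$ and, for those that do, only on their position within the internal $C$-structure of $V$. Both of these are preserved by $\beta$, so $\alpha$ preserves $C$; as $\alpha$ also fixes $\mathbb{L}_2\setminus V$ pointwise and moves every leaf of $V$, we get $\supp(\alpha)=V$.

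The step I expect to be the real work is the last one: checking that the extension-by-identity is genuinely a $C$-automorphism is a routine but fiddly case analysis over the ways the three arguments of $C$ can straddle the boundary of $V$, resting on making precise --- in whatever convention is fixed for $\mathbb{S}_2$, $\mathbb{T}_2$ and $(\mathbb{T}_2)^{<x}$ --- the claim that a leaf outside the cone of $x$ branches off strictly below every internal branch point of the cone. A minor secondary point is to confirm that the half-cones $V_A,V_B$ really are isomorphic to $(\mathbb{L}_2,C)$, which is what allows $\beta$ to be built as a swap.
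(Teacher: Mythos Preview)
Your proposal is correct and follows the same three-step skeleton as the paper: identify the cone with a copy of the whole structure, build a fixed-point-free automorphism of that copy, and extend by the identity off the cone. The paper carries this out at the level of $(\mathbb{T}_2,\leq)$ rather than $(\mathbb{L}_2,C)$: it notes $(\mathbb{T}_2)^{<x}\cong\mathbb{T}_2$, picks \emph{any} $\alpha'\in\aut(\mathbb{T}_2,\leq)$ with no fixed points, conjugates it into the cone via an isomorphism $\mu$, and extends by the identity --- all in three lines. Your version instead stays inside $(\mathbb{L}_2,C)$, constructs the fixed-point-free automorphism explicitly as a swap $\phi\cup\phi^{-1}$ of two half-cones, and checks by a case analysis on the arguments of $C$ that extension by the identity really is a $C$-automorphism. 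Your route is more self-contained --- you never invoke a fixed-point-free automorphism of the ambient tree as a black box, and you make the ``extension by identity preserves $C$'' step explicit --- while the paper's is terser because working in $(\mathbb{T}_2,\leq)$ makes that last step essentially immediate (an order-automorphism of a cone glued to the identity outside is visibly an order-automorphism).
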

\begin{proof}
Each $(\mathbb{T}_2)^{< x}$ is isomorphic to $\mathbb{T}_2$, so let $\mu: \mathbb{T}_2 \rightarrow (\mathbb{T}_2)^{< x}$.  Let $\alpha' \in \aut(\mathbb{T}_2,\leq)$ have no fixed points.

\[\alpha: x \mapsto \left\lbrace
\begin{array}{c l}
x & x \not\in (\mathbb{T}_2)^{<x} \\
\mu \alpha' \mu^{-1} (x) & x \in (\mathbb{T}_2)^{<x}
\end{array}
\right.\]
has the desired property.
\end{proof}

\begin{lemma}\label{Lemma:S2atomless}
$(\mathcal{Y}, \sqcup, \sqcap, - , \emptyset, M)$ is a complete atomless Boolean algebra.
\end{lemma}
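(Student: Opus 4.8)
The plan is to combine Proposition 3.2 (that the regular open sets of any topology form a complete Boolean algebra) with a proof of atomlessness that is specific to the topology $\tau$ on $\mathbb{L}_2$. Completeness comes for free: $\tau$ is a topology on $\mathbb{L}_2$, so $\mathcal{Y}$, its collection of regular open sets, is a complete Boolean algebra under $\sqcup, \sqcap, -$ with bottom $\emptyset$ and top $\mathbb{L}_2$ by Proposition 3.2. So the only real content is that $\mathcal{Y}$ is \emph{atomless}, i.e. that every non-empty regular open $V \in \mathcal{Y}$ properly contains another non-empty regular open set.

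The key step is to establish the analogue of Remark 4.13 for $\mathbb{L}_2$: every non-empty regular open $V$ contains a non-empty \emph{basic} open set $\mathbb{L}_2 \cap (\mathbb{T}_2)^{<x}$, and then to show that such basic cones are themselves regular open and strictly nest. For the first part, if $V \neq \emptyset$ is regular open then $V = \interior{\overline{V}}$ is in particular open and non-empty, so it contains a non-empty basic open set $B$; since the subbasis consists of cones $\mathbb{L}_2 \cap (\mathbb{T}_2)^{<x}$ and their complements, $B$ is a finite intersection of such sets, and using the tree structure of $\mathbb{S}_2$ (properties 1--3 listed before Definition 5.2) one checks that a non-empty such intersection always contains a full cone $\mathbb{L}_2 \cap (\mathbb{T}_2)^{<y}$ below an appropriate $y$. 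For the regular-openness of a cone $W := \mathbb{L}_2 \cap (\mathbb{T}_2)^{<x}$, I would argue that $\overline{W}$ adds only "boundary leaves" that are limits of $W$ but also limits of its complement, so $\interior{\overline{W}} = W$; more carefully, one uses that for any leaf $z \notin W$ there is a basic open neighbourhood of $z$ (another cone, or a co-cone) missing $W$, witnessed by branching in $\mathbb{T}_2$.

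Finally, for strict nesting: given a non-empty cone $W = \mathbb{L}_2 \cap (\mathbb{T}_2)^{<x}$, pick $y < x$ in $\mathbb{T}_2$ with $y$ lying strictly below $x$ but with the subtree $(\mathbb{T}_2)^{<y}$ still meeting $\mathbb{L}_2$ and not equal to all of $(\mathbb{T}_2)^{<x}$ — this is possible because $\mathbb{S}_2$ is binary branching and $\mathbb{T}_2$ is infinite, so below any node there are at least two incomparable nodes each carrying leaves. Then $\emptyset \neq \mathbb{L}_2 \cap (\mathbb{T}_2)^{<y} \subsetneq W$, both are regular open, so $W$ is not an atom. Combined with the first step, no non-empty $V \in \mathcal{Y}$ is an atom, so $\mathcal{Y}$ is atomless.

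The main obstacle I expect is the verification that the basic cones (and their complements) are regular open, and more generally the bookkeeping showing that a non-empty finite Boolean combination of cones contains a full cone: one must handle the co-cones $\mathbb{L}_2 \setminus (\mathbb{T}_2)^{<x}$ carefully, using that removing a cone from $\mathbb{L}_2$ still leaves something that, intersected with another cone, either contains a sub-cone or is empty. This is exactly where properties 1--3 of $\mathbb{S}_2$ (linearity below a common node, existence of lower bounds, binary branching) get used, and it is the step most prone to case analysis rather than a one-line argument.
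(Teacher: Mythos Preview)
Your proposal is correct and follows essentially the same route as the paper: invoke Proposition~\ref{ROpenImpliesCBA} for completeness, then show atomlessness by producing a strictly smaller non-empty regular open cone inside any given non-empty regular open set. The one simplification you miss is that the paper observes in a single line that every basic open set is \emph{clopen} (since the generating family contains both $\mathbb{L}_2 \cap (\mathbb{T}_2)^{<x}$ and its complement $\mathbb{L}_2 \setminus (\mathbb{T}_2)^{<x}$), and clopen sets are automatically regular open; this bypasses the regular-openness verification and the finite-intersection bookkeeping you flag as the main obstacle. The paper also treats the generating family as a basis rather than a subbasis, so any non-empty open set contains a basic open set directly, and then a short two-case argument (cone versus co-cone) produces a strictly smaller cone inside it.
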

\begin{proof}
Proposition \ref{ROpenImpliesCBA} shows that $\mathcal{Y}$ is a complete Boolean algebra.  All basic open sets of $\tau$ are clopen, so are regular open.  Let $V$ be a basic open set contained in regular open $U$.  Suppose $V=\mathbb{L}_2 \cap (\mathbb{T}_2)^{<x}$.  Then there is a $y \in \mathbb{S}_2$ such that $y<x$.  Then $\mathbb{L}_2 \cap (\mathbb{T}_2^{<x}) \subsetneq U$.  Suppose $V = \mathbb{L}_2 \setminus (\mathbb{T}_2)^{<x}$.  Then there is a $y \in \mathbb{T}_2$ such that $y \parallel x$.  Then $\mathbb{L}_2 \cap (\mathbb{T}_2^{<y}) \subsetneq U$.

Therefore $\tau$ is atomless.
\end{proof}

\begin{prop}\label{prop:L1monoid}
Let $f \in \emb(\mathbb{L}_2,\mathcal{L})$.  Then there is a $g \in \emb(\mathbb{L}_2,\mathcal{L})$ such that $g$ and $gf$ are conjugable with respect to $\aut(\mathbb{L}_2, \mathcal{L})$.
\end{prop}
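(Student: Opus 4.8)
The statement to prove is Proposition \ref{prop:L1monoid}: for any embedding $f \in \emb(\mathbb{L}_2, \mathcal{L})$ there is an embedding $g$ such that both $g$ and $gf$ are conjugable with respect to $\aut(\mathbb{L}_2, \mathcal{L})$. This is the exact analogue for the $C$-relation of Lemma \ref{Lemma:LocallyMovingMonoid} for $\mathbb{Q}$, so I would follow the same blueprint, adapted to the tree setting.

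My plan:
- Set up notation with named copies $\mathbb{L}_2^{(1)}, \mathbb{L}_2^{(2)}$ so that $f : \mathbb{L}_2^{(1)} \to \mathbb{L}_2^{(2)}$, and work with the ambient semilinear orders $\mathbb{S}_2^{(i)}$ (or the Dedekind--MacNeille-type completions $\mathbb{T}_2^{(i)}$) into which $\mathbb{L}_2^{(i)}$ and the relevant automorphisms extend uniquely, as recorded just before the definition of $\tau$.
- Recall the content of Definition \ref{Dfn:Conjugable}: $g$ is conjugable w.r.t. $\aut(\mathbb{L}_2, \mathcal{L})$ iff for every $1$-type $p \in S(\im(g))$ and every $\beta \in \aut(\mathbb{L}_2, \mathcal{L})$, the sets of realisations of $p$ and of $\beta(p)$ have the same cardinality. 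Since $\mathbb{L}_2$ is countable, the cleanest way to secure this is to build $g$ so that \emph{every} consistent $1$-type over $\im(g)$ that is realised is realised exactly $\aleph_0$ times; then $|\,\{x : p(x)\}\,| = |\,\{x : \beta(p)(x)\}\,| = \aleph_0$ automatically.

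- The key construction, mirroring the $\mathbb{Q}$ proof: for each $x \in \mathbb{L}_2^{(2)} \setminus \im(f)$, the type $\mathrm{tp}_{\im(f)}(x)$ corresponds to a branch/cut in the tree $\mathbb{T}_2^{(2)}$ relative to $f(\mathbb{T}_2^{(1)})$; let the orbit of this cut under the group $\aut(\mathbb{L}_2, \mathcal{L})$ (or a suitable quadratic-analogue subgroup playing the role of $G$) determine a countable family of $\im(f)$-types, and for each such type adjoin a countable set of new leaves realising it --- concretely by replacing $\mathbb{L}_2^{(2)}$ by a blow-up such as $\mathbb{L}_2^{(2)} \times \mathbb{L}_2$ and inserting, above the relevant node, copies of a countable branching set for each orbit member. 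One checks this blow-up is again (isomorphic to) $\mathbb{L}_2$ by Fra\"iss\'e / homogeneity and universality for finite $C$-relations, and defines $g : \mathbb{L}_2^{(2)} \to \mathbb{L}_2^{(3)}$ by $x \mapsto (x, *)$ for a fixed basepoint. Because we closed under the group action when choosing which types to saturate, for any realised $p \in S(\im(g))$ and any $\beta$, $\beta(p)$ is again among the saturated types and hence has $\aleph_0$ realisations; so $g$ is conjugable. The argument that $gf$ is conjugable is essentially identical, since $\im(gf) \subseteq \im(g)$ and the same saturation applies --- as in the $\mathbb{Q}$ case one simply remarks it is ``virtually identical''.

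The main obstacle I anticipate is purely bookkeeping rather than conceptual: correctly describing $1$-types over $\im(f)$ inside the tree --- a leaf's type over a parameter set is governed by where its branch first splits off from (the convex closure of) that set, i.e.\ by a cut in $\mathbb{T}_2$, possibly with a ``direction'' at an internal node --- and verifying that the blow-up really is isomorphic to $\mathbb{L}_2$ and that the new leaves realise exactly the intended types without accidentally realising others or identifying distinct ones. One must also be a little careful that the completion $\mathbb{T}_2$ used is the right one for whichever reduct $\mathcal{L} \in \{C, D\}$ is in play (the $D$-reduct is ``undirected'', so cuts come without a distinguished side), analogous to the $\mathbb{R}^+_i$ case split driven by $\circlearrowright$ in the $\mathbb{Q}$ proof. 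Once the type analysis is pinned down, the cardinality equalities needed for conjugability are immediate.
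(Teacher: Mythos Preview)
Your plan matches the paper's proof essentially step for step: label copies, pass to the Dedekind--MacNeille completion $\mathbb{T}_2^+$, associate to each $x \notin \im(f)$ a node $y(x)$ in the completion encoding its $\im(f)$-type, close under the automorphism group, form a product with a copy of $\mathbb{L}_2$ to obtain $\mathbb{L}_2^{(3)}$, and take $g$ to be the diagonal-type embedding so that every realised $\im(g)$-type has $\aleph_0$ realisations. Two small deviations worth noting: the paper does not introduce a quadratic-analogue subgroup $G$ but uses the full $\aut(\mathbb{L}_2)$, arguing directly that the orbit of each $y(x)$ is countable (this is where the real work is, and your sketch glosses over it); and it handles the $C$/$D$ reducts uniformly by defining $C$ on the product and then reading off $\mathcal{L}$ via its first-order definition in terms of $C$, so no case split analogous to the $\circlearrowright$ one is needed.
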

\begin{proof}
As with the rationals, we will label all the different copies of $\mathbb{L}_2$ with numbers, such as $\mathbb{L}_2(i)$.  We let $\mathbb{T}_2(i)$ be the corresponding copy of $\mathbb{T}_2$.  Let $\mathbb{T}_2(i)^+$ be the minimal Dedekind-MacNeille complete extension of $\mathbb{T}_2(i)$.

Let $f: \mathbb{L}_2(1) \rightarrow \mathbb{L}_2(2)$ be a self-embedding.  Let $ x \in \mathbb{L}_2(2) \setminus \mathrm{im}(f)$.  If there are $u,v \in \mathrm{im}(f)$ such that $\mathbb{L}_2(2) \models C(u;x,v)$ then there is some maximal $y \in \mathbb{T}_2(1)^+$ such that $f(y) \geq x$.  Let $y(x)$ be this maximal element.  If $\mathbb{L}_2(2) \models C(x;u,v)$ for all $u,v \in \mathrm{im}(f)$ then let $y(x) := \mathrm{inf}\lbrace z \in \mathbb{T}_2(2) \: : \: \forall u \in \mathrm{im}(f) \:  z \geq x, u \rbrace$.

Let $\alpha \in \aut(\mathbb{L}_2,C)$, let $ t \in \mathbb{T}_2$, and let $ l \in \mathbb{L}_2$ be below $t$.  If $\beta \in \aut(\mathbb{L}_2)$ is such that $\beta(l) < \alpha(t)$ then there is a $\gamma \in \aut(\mathbb{L}_2)$ such that $\gamma \beta(t)=\alpha(t)$ and $\gamma(l)=l$.  Since there are only countably many possible destinations for $l$, and $\gamma|_{(\mathbb{S}_2)^{>l}} \in \aut(\mathbb{Q})$, this means that the $\aut(\mathbb{L},\mathcal{L})$-orbit of $y(x)$ is countable.

\[\mathbb{L}_2(3) := \mathbb{L}_2 \times (\mathbb{L}_2(1) \cup \lbrace \alpha(y(x))\: : \: \alpha \in \aut(\mathbb{L}_2(1)) \textnormal{ and } x \in \mathbb{L}_2(2) \setminus \mathrm{im}(f) \rbrace)\]

We define $C$ on $\mathbb{L}_2(3)$ as follows:

\[C((z,i);(y_1,j_1),(y_2,j_2)) \Leftrightarrow \left\lbrace
\begin{array}{c c}

 i \not= j_1 = j_2 & \mathrm{or} \\
  i=j_1=j_2 \textnormal{ and } C(z;y_1,y_2) & \mathrm{or} \\
 C(i;j_1,j_2) \\

\end{array}
\right.\]

Since $(\mathbb{L}_2, \mathcal{L})$ is a reduct of $(\mathbb{L}_2,C)$, each symbol in $\mathcal{L}$ has a first order definition using $C$.  If we mimic these definitions on $(\mathbb{L}_2(3),C)$, we obtain $(\mathbb{L}_2(3),\mathcal{L}) \cong (\mathbb{L}_2,\mathcal{L})$.

Let $x \in \mathbb{L}_2(2) \setminus \mathrm{im}(f)$.  The set $\lbrace x' \in \mathbb{L}_2(2) \: : \: \mathrm{tp}_{\mathrm{im}(f)}(x')= \mathrm{tp}_{\mathrm{im}(f)}(x) \rbrace$ is a substructure of $\mathbb{L}_2(2)$, and hence embeds into $  \lbrace y(x) \rbrace\times\mathbb{L}_2$.  Therefore $\mathbb{L}_2(2) \subseteq \mathbb{L}_2(3)$.  We take $g: \mathbb{L}_2(2) \rightarrow \mathbb{L}_2(3)$ to be the embedding $g(x)=(x,x)$.  For every $x \in \mathbb{L}_2(3) \setminus \mathrm{im}(g)$
\[|\lbrace x' \in \mathbb{L}_2(3) \: : \: \mathrm{tp}_{\mathrm{im}(g)}(x') = \mathrm{tp}_{\mathrm{im}(g)}(x) \rbrace| = \aleph_0\]
and hence $g$ is conjugable with respect to $\aut(\mathbb{L}_2,\mathcal{L})$.  The argument that $gf$ is conjugable is very similar.

\end{proof}

\begin{theorem}
Let $(\mathbb{L}_2,\mathcal{L})$ be a reduct of $(\mathbb{L},C)$ such that
\begin{enumerate}
\item $\aut(\mathbb{L}_2,\mathcal{L}) \not= \aut(\mathbb{L}_2,=)$; and
\item $\mon(\mathbb{L}_2,\mathcal{L}) = \emb(\mathbb{L}_2,\mathcal{L})$.
\end{enumerate}
Then $\pol(\mathbb{L}_2,\mathcal{L})$ has automatic homeomorphicity.
\end{theorem}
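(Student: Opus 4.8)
The plan is to follow the template of the theorem on $\pol(\mathbb{Q},\mathcal{L})$ almost verbatim: show that $\pol(\mathbb{L}_2,\mathcal{L})$ satisfies the Group Condition and the Monoid Condition of Definition \ref{dfn:localmoveclone}, so that it is a locally moving clone, and then invoke Corollary \ref{Cor:AH}. Throughout, the ambient Boolean algebra is $\mathcal{Y}$, the collection of regular open sets of the tree topology $\tau$ on $\mathbb{L}_2$, which is a complete atomless Boolean algebra by Lemma \ref{Lemma:S2atomless}.

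For the Group Condition, first note that since $(\mathbb{L}_2,\mathcal{L})$ is a reduct of $(\mathbb{L},C)$ with $\aut(\mathbb{L}_2,\mathcal{L}) \neq \aut(\mathbb{L}_2,=)$, the classification of reducts of $(\mathbb{L}_2,C)$ in \cite{CRelOnLeaves} forces $\aut(\mathbb{L}_2,\mathcal{L})$ to be either $\aut(\mathbb{L}_2,C)$ or $\aut(\mathbb{L}_2,D)$; both preserve $\tau$, so $\aut(\mathbb{L}_2,\mathcal{L})$ acts on $\mathcal{Y}$. Moreover $\aut(\mathbb{L}_2,C) \leq \aut(\mathbb{L}_2,\mathcal{L})$ in every admissible case. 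Now let $V \in \mathcal{Y}$ be nonempty. Then $V$ is a nonempty open set, so it contains a basic open set, and since the cones $\mathbb{L}_2 \cap (\mathbb{T}_2)^{<x}$ refine the basis at every point (exactly the observation used in the proof of Lemma \ref{Lemma:S2atomless}, where a complement cone $\mathbb{L}_2 \setminus (\mathbb{T}_2)^{<x}$ is seen to contain a cone $\mathbb{L}_2 \cap (\mathbb{T}_2)^{<y}$ with $y \parallel x$), we may assume $V \supseteq \mathbb{L}_2 \cap (\mathbb{T}_2)^{<x}$ for some $x$. Lemma \ref{Lemma:S2support} supplies $\alpha \in \aut(\mathbb{L}_2,C) \leq \aut(\mathbb{L}_2,\mathcal{L})$ with $\supp(\alpha) = \mathbb{L}_2 \cap (\mathbb{T}_2)^{<x}$; as this support is clopen, hence regular open, we have $\varBA(\alpha) = \mathbb{L}_2 \cap (\mathbb{T}_2)^{<x} \leq V$ in $\mathcal{Y}$. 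Thus $\aut(\mathbb{L}_2,\mathcal{L})$ is a locally moving group and the Group Condition holds.

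For the Monoid Condition it suffices, by the remark following Definition \ref{dfn:localmoveclone}, to establish the simpler equivalent form $\forall f \in \Delta_1\, \exists g \in \Delta_1\,(\zeta(g) \wedge \zeta(gf))$. By hypothesis $\mon(\mathbb{L}_2,\mathcal{L}) = \emb(\mathbb{L}_2,\mathcal{L})$, so every unary element of $\pol(\mathbb{L}_2,\mathcal{L})$ is a self-embedding. Given such an $f$, Proposition \ref{prop:L1monoid} produces $g \in \emb(\mathbb{L}_2,\mathcal{L})$ such that both $g$ and $gf$ are conjugable with respect to $\aut(\mathbb{L}_2,\mathcal{L})$. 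Taking $G = \aut(\mathbb{L}_2,\mathcal{L})$, the hypothesis of Proposition \ref{Prop:LocallyMovingMonoid} (that every $\varBA(\alpha)$ is realised by an element of $G$) is trivially satisfied with $\beta = \alpha$, so Proposition \ref{Prop:LocallyMovingMonoid} gives that $g$ and $gf$ are algebraically canonical, i.e.\ $\zeta(g)$ and $\zeta(gf)$ hold. Hence the Monoid Condition is met, $\pol(\mathbb{L}_2,\mathcal{L})$ is a locally moving clone, and Corollary \ref{Cor:AH} yields automatic homeomorphicity with respect to all polymorphism clones.

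Since the argument is a transcription of the $\mathbb{Q}$ case, essentially all of the real content sits in results already proved; the points that require care are (i) the identification, inside $\mathcal{Y}$, of $\varBA(\alpha)$ with the clopen support of the automorphisms built in Lemma \ref{Lemma:S2support}, and (ii) the appeal to the reduct classification of \cite{CRelOnLeaves} to pin down the admissible automorphism groups $\aut(\mathbb{L}_2,C)$ and $\aut(\mathbb{L}_2,D)$ and their preservation of $\tau$. The genuinely substantive ingredient — manufacturing, for each self-embedding $f$, a $g$ making $g$ and $gf$ conjugable — is Proposition \ref{prop:L1monoid}, which is assumed here.
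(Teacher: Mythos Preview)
Your proposal is correct and follows essentially the same route as the paper: verify the Group Condition via Lemmas \ref{Lemma:S2support} and \ref{Lemma:S2atomless} together with the reduct classification, verify the Monoid Condition via Proposition \ref{prop:L1monoid}, and conclude with Corollary \ref{Cor:AH}. If anything, you are more careful than the paper's own one-line proof, since you make explicit the passage from conjugability to algebraic canonicity via Proposition \ref{Prop:LocallyMovingMonoid} and the identification $\varBA(\alpha)=\supp(\alpha)$ for the clopen-supported automorphisms, both of which the paper leaves implicit.
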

\begin{proof}
Both $\aut(\mathbb{L}_2,C)$ and $\aut(\mathbb{L}_2,D)$ preserve $\tau$, so Lemmas \ref{Lemma:S2support} and \ref{Lemma:S2atomless} together show that $\aut(\mathbb{L}_2,\mathcal{L})$ is a locally moving group.  Proposition \ref{prop:L1monoid} shows that $\mon(\mathbb{L}_2,\mathcal{L})$ satisfies the Monoid Condition.  Therefore $\pol(\mathbb{L}_2,\mathcal{L})$ is locally moving, and hence has automatic homeomorphicity.
\end{proof}

\section{The Countable Atomless Boolean Algebra}

The class of all finite Boolean algebras is a Fra\"{i}ss\'{e} class, with Fra\"{i}ss\'{e} limit $\mathcal{B}$.  The language of Boolean algebras used is $\mathcal{L}_B:= \lbrace 0, 1, \cup, \cap, ^c ,\neq \rbrace$, representing the empty set, the full set, union, intersection, complement and inequality respectively.  As with all Fra\"{i}ss\'{e} limits, it is universal, $\aleph_0$-categorical and ultrahomogeneous.  Its ultrahomogeneity implies that it is atomless, and so its completion is also atomless.  The reducts of $\mathcal{B}$ are not fully classified, but the ones with an entirely functional signature are known, thanks to  Bodor, Kalina and Szab\'{o} \cite{FunctionalReductsBA}.

The most natural candidate for a complete atomless Boolean algebra for $\pol(\mathcal{B})$ to act on is of course $\bar{\mathcal{B}}$, the completion of $\mathcal{B}$.

\begin{lemma}\label{BA:groupcondition}
For every $b \in \bar{\mathcal{B}}$ there is an $a \in \mathcal{B}$ and $\phi \in \aut(\mathcal{B})$ such that $a \leq b$ and if $\phi(x) \neq x$ then $x < a$.
\end{lemma}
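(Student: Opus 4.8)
The plan is to reduce the statement to the standard fact that $\mathcal{B}$, being the Fra\"iss\'e limit of the finite Boolean algebras, is homogeneous and has automorphisms supported below any prescribed nonzero element. First I would observe that it suffices to handle $b \in \bar{\mathcal B}$ of the form $b = \bigcup_{i\in I} a_i$ for $a_i \in \mathcal{B}$ (every element of the completion is such a supremum), and then replace $b$ by any single nonzero $a_i \leq b$ with $a_i \in \mathcal{B}$; so in fact it is enough to prove: for every nonzero $a \in \mathcal B$ there is $\phi \in \aut(\mathcal B)$ with $\phi(x) = x$ whenever $x \not< a$, and $\phi \neq \identity$ — indeed we may even arrange $\supp(\phi)$ to lie inside the down-set of $a$, which gives the slightly stronger reading ``$\varBA(\phi) \le a$'' needed for the Group Condition later.

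The key step is to build such a $\phi$ using homogeneity of $\mathcal B$. Fix nonzero $a \in \mathcal B$. The relative algebra $\mathcal B{\restriction}a := \{x \in \mathcal B : x \le a\}$, with top element $a$ and the induced operations, is again a countable atomless Boolean algebra, hence isomorphic to $\mathcal B$; in particular it is not rigid, so pick a nontrivial $\psi \in \aut(\mathcal B{\restriction}a)$. Now define $\phi$ on $\mathcal B$ by $\phi(x) := \psi(x \cap a) \cup (x \cap a^c)$. One checks directly that $\phi$ is a Boolean-algebra automorphism of $\mathcal B$ (it is the ``product'' automorphism under the internal decomposition $\mathcal B \cong (\mathcal B{\restriction}a) \times (\mathcal B{\restriction}a^c)$ acting as $\psi$ on the first factor and the identity on the second), that $\phi(x) = x$ for every $x$ with $x \cap a = 0$, equivalently every $x$ with $x \le a^c$, and more generally $\phi(x)=x$ whenever $x \not< a$ fails only possibly when $x \cap a \ne 0$; a short argument shows the fixed-point set is exactly $\{x : \psi(x\cap a) = x \cap a\}$, so $\mathrm{fix}(\phi) \ge a^c$ and hence $\varBA(\phi) = -\mathrm{fix}(\phi) \le a$, as required. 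Nontriviality of $\psi$ gives $\phi \ne \identity$.

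The only real obstacle is the bookkeeping between the completion $\bar{\mathcal B}$ and $\mathcal B$ itself: one must be sure that every $b \in \bar{\mathcal B}$ dominates some nonzero \emph{element of $\mathcal B$}, which holds because $\mathcal B$ is join-dense in its completion, and that the $a$ produced genuinely lies in $\mathcal B$ rather than merely in $\bar{\mathcal B}$. Everything else — that $\mathcal B{\restriction}a \cong \mathcal B$, that a countable atomless Boolean algebra has nontrivial automorphisms, that $\phi$ as defined is an automorphism — is routine and follows from $\aleph_0$-categoricity and ultrahomogeneity of $\mathcal B$. I would therefore present the argument in three short moves: (1) reduce to a single nonzero $a\in\mathcal B$; (2) split $\mathcal B$ as a product over $a$ and $a^c$; (3) transport a nontrivial automorphism of the $a$-part and verify its variation lies below $a$.
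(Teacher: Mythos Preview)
Your argument is correct and takes a genuinely different route from the paper's. The paper does not work internally in $\mathcal{B}$ at all: it identifies $\mathcal{B}$ with the algebra $\mathfrak{x}$ of finite unions of rational intervals (so that $\bar{\mathcal{B}}$ becomes the regular open algebra $\mathcal{X}$ of $\mathbb{Q}$ from Section~\ref{Section:Q}), and then simply quotes the earlier fact that $\aut(\mathbb{Q},<)$ is locally moving together with the inclusion $\aut(\mathbb{Q},<) \le \aut(\mathcal{B})$. Your product decomposition $\mathcal{B} \cong (\mathcal{B}{\restriction}a) \times (\mathcal{B}{\restriction}a^c)$ combined with $\aleph_0$-categoricity of countable atomless Boolean algebras is more self-contained and arguably cleaner for this lemma in isolation; the paper's detour through $\mathbb{Q}$ earns its keep only because the concrete interval-algebra model is reused immediately afterwards to define $E(\mathcal{B})$ and the embedding $e$ needed for the Monoid Condition.

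One minor point you already half-noticed: neither your $\phi$ nor the paper's literally satisfies the clause ``$\phi(x)\neq x \Rightarrow x < a$'' (take $x$ with $0 < x\cap a$ moved by $\psi$ and $0 < x \cap a^c$). What both arguments actually deliver, and what you correctly pivot to, is $\varBA(\phi)\le a$, which is precisely the content required for the Group Condition; the lemma's wording is loose rather than your proof being deficient.
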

\begin{proof}
Rather than show this directly, we will show that $\mathcal{X}$, the algebra of regular open subsets of the rationals discussed in Section \ref{Section:Q}, is isomorphic to $\bar{B}$.  Note that $\mathcal{X}$ is the completion of the Boolean algebra of finite unions of intervals of $\mathbb{Q}$, which we call $\mathfrak{x}$.  This $\mathfrak{x}$ is both countable and atomless, and hence $\mathcal{B} \cong \mathfrak{x}$, and $\bar{\mathcal{B}} \cong \mathcal{X}$.  Since $\aut(\mathbb{Q},<)$ is locally moving, and $\aut(\mathbb{Q},<) \leq \aut(\mathcal{B},\cup,\cap, 0,1, \,^c)$, we have this lemma.
\end{proof}

Thus $\aut(\mathcal{B},\cup,\cap, 0,1, \,^c)$ fulfils The Group Condition.  We turn out attention to The Monoid Condition.

\begin{dfn}
Let $E(\mathcal{B}) $ be the space of finite unions of intervals from $\mathbb{Q} \times \mathbb{Q}$ with the lexicographic order.  Let $e: \mathcal{B} \rightarrow E(\mathcal{B})$ be the function $
e(I) := I \times I
$ where $I \in \mathcal{X}$.
\end{dfn}

\begin{lemma}\label{BA:EverythingIsRepresented}
For every $\alpha \in \aut(\mathcal{B})$ there is a $\beta \in G$ such that $\varBA(\alpha)=\varBA(\beta)$.
\end{lemma}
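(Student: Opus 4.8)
The goal of Lemma \ref{BA:EverythingIsRepresented} is, for an arbitrary $\alpha \in \aut(\mathcal{B})$, to manufacture a $\beta$ in the distinguished family $G$ with $\varBA(\alpha) = \varBA(\beta)$. Recalling that $\varBA(\cdot)$ depends only on the set $\{a : a \cap \alpha(a) = 0\}$ (Rubin's Proposition 2.3), the natural strategy is the same one used for $\mathbb{Q}$ in Lemma \ref{lemma:GforQ}: reduce to controlling the ``support'' of the automorphism and then replace $\alpha$ on each maximal piece of its support by a canonical member of $G$ that has exactly the same support. So first I would make precise what ``support'' means here — via the isomorphism $\mathcal{B} \cong \mathfrak{x}$ and $\bar{\mathcal{B}} \cong \mathcal{X}$ established in Lemma \ref{BA:groupcondition}, an element of $\aut(\mathcal{B})$ is (the restriction of) an automorphism of $\mathcal{X}$, equivalently an order-automorphism of the underlying topological space $\mathbb{Q}$, and $\fixBA(\alpha)$ is the interior of the fixed set while $\varBA(\alpha) = -\fixBA(\alpha)$ is essentially the regular-open closure of the topological support.

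The plan is then: (1) decompose $\varBA(\alpha)$ into its maximal convex (interval) constituents $\{(a_i,b_i)\}$ inside $\mathcal{X}$, exactly as in the proof of Lemma \ref{lemma:GforQ}; (2) on each such interval replace $\alpha$ by the explicit automorphism $\alpha_{(a_i,b_i)}$ from Proposition \ref{prop:Qsupport} (or rather its avatar acting on $\mathcal{B} \cong \mathfrak{x}$), which moves every nontrivial sub-element and fixes everything outside $(a_i,b_i)$; (3) glue these together with the identity on the complement of $\varBA(\alpha)$ to obtain $\beta$; (4) check $\beta \in G$, i.e.\ that $\beta$ is induced by a $\mathbb{Q}\times\mathbb{Q}$-style piecewise map of the kind described in the definition of $E(\mathcal{B})$ and $G$ — here one uses that the pieces $\alpha_{(a_i,b_i)}$ are themselves of the required (``quadratic'' / piecewise algebraic) form and that there are only countably many of them; (5) conclude $\supp(\beta) = \supp(\alpha)$ in $\mathcal{X}$, hence $\fixBA(\beta) = \fixBA(\alpha)$, hence $\varBA(\alpha) = \varBA(\beta)$. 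I expect the write-up to be a near-verbatim transcription of Lemma \ref{lemma:GforQ} modulo the identification of the relevant copy of $G$.

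The main obstacle is bookkeeping rather than mathematics: one must be careful that the family $G$ for the Boolean-algebra setting — which lives in $E(\mathcal{B})$, the finite unions of intervals of $\mathbb{Q}\times\mathbb{Q}$ under the lexicographic order, with $e(I) = I\times I$ — actually contains the glued automorphism $\beta$. Concretely, I need to verify that the support $\supp(\alpha) \subseteq \mathcal{X}$ pulls back along $e$ (equivalently along the isomorphism $\mathcal{B}\cong\mathfrak{x}$) to something whose maximal convex pieces are again intervals of the right shape, and that the quadratic maps used on each piece lift to the lexicographic square $\mathbb{Q}\times\mathbb{Q}$ — i.e.\ that applying a quadratic automorphism coordinatewise (or in the first coordinate with the identity on the second) stays inside the class defining $G$. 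Provided $G$ is set up, as the analogous $G$ for $\mathbb{Q}$ was, to be closed under such piecewise constructions, this is routine; the only genuine check is that ``only countably many pieces'' suffices, which holds because $\mathfrak{x}$ is countable so $\supp(\alpha)$ has at most countably many maximal convex components.

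Thus the proof reads: via $\mathcal{B}\cong\mathfrak{x}$ regard $\alpha$ as an automorphism of $\mathcal{X}$; let $\{(a_i,b_i)\}$ enumerate the maximal convex components of $\supp(\alpha)$; set
\[
\beta := \identity|_{\mathbb{Q}\setminus \supp(\alpha)} \;\cup\; \bigcup_i \alpha_{(a_i,b_i)}|_{(a_i,b_i)},
\]
transported into $E(\mathcal{B})$ via $e$; observe $\beta \in G$ and $\supp(\beta) = \supp(\alpha)$; conclude $\fixBA(\alpha) = \fixBA(\beta)$ and hence $\varBA(\alpha) = \varBA(\beta)$, which is what was required.
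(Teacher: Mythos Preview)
Your core strategy is the paper's: identify $\bar{\mathcal{B}}$ with $\mathcal{X}$, view $\varBA(\alpha)$ as a regular open subset of $\mathbb{Q}$, and then invoke the construction of Lemma~\ref{lemma:GforQ} to produce a quadratic $\beta$ with the same $\varBA$. That is exactly what the paper does, in two lines.

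However, two confusions in your write-up would need to be fixed before it is a correct proof.

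First, the assertion that an element of $\aut(\mathcal{B})$ is ``equivalently an order-automorphism of the underlying topological space $\mathbb{Q}$'' is false: $\aut(\mathbb{Q},<)$ is a proper subgroup of $\aut(\mathcal{B})$ (for instance, any automorphism swapping two disjoint clopen pieces is not order-preserving). Consequently ``$\supp(\alpha)$'' as a subset of $\mathbb{Q}$ is not defined for a general $\alpha\in\aut(\mathcal{B})$. This is harmless for the argument, but only because you never actually need $\alpha$ itself: all you use is $\varBA(\alpha)\in\bar{\mathcal{B}}\cong\mathcal{X}$, which \emph{is} a regular open subset of $\mathbb{Q}$ and can be decomposed into maximal open intervals as you say. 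The proof should be phrased directly in terms of $\varBA(\alpha)$, not $\supp(\alpha)$.

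Second, and more seriously, you have misidentified $G$. In this section $G$ is still the set of quadratic automorphisms of $(\mathbb{Q},<)$ from Definition~\ref{dfn:quadratic}, regarded as elements of $\aut(\mathcal{B})$ via the inclusion $\aut(\mathbb{Q},<)\leq\aut(\mathcal{B})$ established in Lemma~\ref{BA:groupcondition}. The structure $E(\mathcal{B})$ and the embedding $e$ are introduced only \emph{after} this lemma, for the purpose of building conjugable endomorphisms in Lemma~\ref{BA:conjugable}; they play no role here. So all of your steps (4)--(5) about ``transporting into $E(\mathcal{B})$ via $e$'', ``lifting to the lexicographic square $\mathbb{Q}\times\mathbb{Q}$'', and checking closure of $G$ under such lifts are addressing a non-problem. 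Once you drop them, what remains is precisely the paper's argument: $\varBA(\alpha)$ is a regular open set, and the gluing from Lemma~\ref{lemma:GforQ} produces $\beta\in G$ with $\varBA(\beta)=\varBA(\alpha)$.
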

\begin{proof}
Lemma \ref{lemma:GforQ} shows that for every regular open subset $I \subseteq \mathbb{Q}$ there is a $\beta \in G$ such that $\varBA(\beta)=I$.  Since the algebra of regular open subsets of $\mathbb{Q}$ is isomorphic to $\bar{\mathcal{B}}$, this means that there is a $\beta$ such that $\varBA(\beta)=\varBA(\alpha)$.
\end{proof}

\begin{lemma}
$E(\mathcal{B}) \cong \mathcal{B}$ and $e$ is a self-embedding.
\end{lemma}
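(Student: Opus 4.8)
The plan is to verify the two assertions separately. For the isomorphism $E(\mathcal{B}) \cong \mathcal{B}$, I would argue exactly as in Lemma \ref{BA:groupcondition}: by a Fra\"{i}ss\'{e}-theoretic uniqueness argument it suffices to show that $E(\mathcal{B})$, viewed as a Boolean algebra under the operations induced by the regular-open structure, is countable and atomless, since the countable atomless Boolean algebra is unique up to isomorphism. Countability is clear, because $E(\mathcal{B})$ consists of finite unions of intervals of the countable linear order $\mathbb{Q}\times\mathbb{Q}$ (lexicographic), and there are only countably many such. Atomlessness follows from the density of $\mathbb{Q}\times\mathbb{Q}$ in itself: every nonempty regular open set contains a nonempty open interval, and every nonempty open interval in a dense order properly contains a smaller nonempty open interval, so no nonzero element is minimal. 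Hence $E(\mathcal{B})$ is the countable atomless Boolean algebra, which is $\mathcal{B}$.

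For the second assertion, that $e$ is a self-embedding, I would check that $e(I) := I\times I$ respects the Boolean operations. Since $\mathbb{Q}\times\mathbb{Q}$ carries the lexicographic order and $I$ ranges over regular open subsets of $\mathbb{Q}$, the key observations are: $e(I\sqcap J) = (I\cap J)\times(I\cap J) = (I\times I)\cap(J\times J)$ as sets, which then agrees with the regular-open meet in $E(\mathcal{B})$; the analogous statement for $\sqcup$ follows after taking interiors of closures, using that the product of regular open sets is (after regularization) regular open in the lexicographic topology; and $e$ is injective because $I\times I = J\times J$ forces $I=J$. One also checks $e(\emptyset)=\emptyset$ and $e(\mathbb{Q})=\mathbb{Q}\times\mathbb{Q}$, and that complements are sent to complements modulo regularization. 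Thus $e$ is an embedding of Boolean algebras, and since both source and target are the countable atomless Boolean algebra, it is a self-embedding.

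The main obstacle I anticipate is the bookkeeping around regularization in the lexicographic topology on $\mathbb{Q}\times\mathbb{Q}$: unlike the case of $\mathbb{Q}$, here $\interior{\overline{X}}$ can behave subtly at the "boundary between copies," so one must be careful that $e$ really commutes with $\sqcup$ and with complementation, not merely with the naive set-theoretic union and complement. I would handle this by working with the dense subalgebra $\mathfrak{x}$ of finite unions of intervals (where the operations are transparent) and then invoking the fact, already used in Lemma \ref{BA:groupcondition}, that an order-embedding of a dense subalgebra into a complete Boolean algebra extends uniquely; this reduces the whole verification to the combinatorics of finite unions of lexicographic intervals, which is routine. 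The payoff of this lemma is that, together with Lemma \ref{BA:EverythingIsRepresented} and an argument that $e$ (and suitable composites $eg$) are conjugable with respect to $G$ in the sense of Definition \ref{Dfn:Conjugable}, one obtains the Monoid Condition for $\pol(\mathcal{B})$ via Proposition \ref{Prop:LocallyMovingMonoid}.
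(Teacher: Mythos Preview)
Your argument for $E(\mathcal{B})\cong\mathcal{B}$ is fine and is exactly the paper's approach: $\mathbb{Q}\times\mathbb{Q}$ with the lexicographic order is a countable dense linear order, hence isomorphic to $\mathbb{Q}$, and the algebra of finite unions of intervals is therefore the countable atomless Boolean algebra.

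The second half, however, has a genuine gap. With the formula $e(I)=I\times I$ the map is \emph{not} a Boolean homomorphism, and regularization does not rescue it. First, $I\times I$ is not even an element of $E(\mathcal{B})$: for $I=(a,b)$ the set $\{(p,q):a<p<b,\ a<q<b\}$ is not convex in the lexicographic order (e.g.\ $(p,b+1)$ lies between $(p,q)$ and $(p',q')$ whenever $a<p<p'<b$), and it decomposes into infinitely many disjoint intervals $\{p\}\times(a,b)$, not finitely many. Second, even ignoring this, union fails outright: take $I=(0,1)$, $J=(2,3)$; the point $(0.5,2.5)$ lies in $(I\cup J)\times(I\cup J)$ but the lexicographic interval $((0.5,2.4),(0.5,2.6))$ meets neither $I\times I$ nor $J\times J$, so $(0.5,2.5)$ is not even in the closure of $e(I)\cup e(J)$. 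The same example shows complement fails. Your anticipated fix via ``working in the dense subalgebra $\mathfrak{x}$'' cannot help, because the failure is already at the level of finite unions of intervals.

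What the paper's proof actually does (the displayed definition $e(I):=I\times I$ is evidently a typo) is take $e(I):=I\times\mathbb{Q}$. With this choice $e(I)$ is a genuine interval of the lexicographic order whenever $I$ is, and the verifications $e(I\cup J)=(I\cup J)\times\mathbb{Q}=(I\times\mathbb{Q})\cup(J\times\mathbb{Q})$, $e(I\cap J)=(I\times\mathbb{Q})\cap(J\times\mathbb{Q})$, and $e(\mathbb{Q}\setminus I)=(\mathbb{Q}\times\mathbb{Q})\setminus(I\times\mathbb{Q})$ are immediate set-theoretic identities requiring no regularization at all. You should replace $I\times I$ by $I\times\mathbb{Q}$ throughout and the argument then goes through in one line.
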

\begin{proof}
As we have already seen, $\mathcal{B} \cong \mathcal{X}$.  As both are countable dense linear orders, $\mathbb{Q} \times \mathbb{Q}$ with the lexicographic order is isomorphic to $\mathbb{Q}$.  Therefore $\mathcal{B} \cong E(\mathcal{B})$.

$e$ is obviously injective, so we need only check that it is a homomorphism.  For the constants, $e(\emptyset)= \emptyset \times \mathbb{Q}=\emptyset$ and $e(\mathbb{Q}) = \mathbb{Q} \times \mathbb{Q} = \mathbb{Q}$.  For the operations:
$$
\begin{array}{c c c}
\begin{array}{rcl}
e(I \cup J) & = & (I \cup J )\times \mathbb{Q} \\
& = & (I \times \mathbb{Q} ) \cup (J \times \mathbb{Q}) \\
& = & e(I) \cup e(J) \\

\end{array}
\begin{array}{rcl}

e(I \cap J) & = & (I \cap J )\times \mathbb{Q} \\
& = & (I \times \mathbb{Q} ) \cap (J \times \mathbb{Q}) \\
&=& e(I) \cap e(J)
\end{array}
\begin{array}{rcl}
e(\mathbb{Q} \setminus I) & = & (\mathbb{Q} \setminus I )\times \mathbb{Q} \\

& = & (\mathbb{Q} \times \mathbb{Q} ) \setminus (I \times \mathbb{Q}) \\
& = & \mathbb{Q} \setminus e(I)
\end{array}

\end{array}
$$
\end{proof}

\begin{lemma}\label{BA:conjugable}
For all $f \in \emb(\mathcal{B})$ there is a $g \in \emb(\mathcal{B})$ such that both $g$ and $gf$ are conjugable with respect to $G$, the set of all quadratic automorphisms of $\mathbb{Q}$ (Definition \ref{dfn:quadratic}).
\end{lemma}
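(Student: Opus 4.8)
The plan is to imitate the proof of Lemma~\ref{Lemma:LocallyMovingMonoid} almost verbatim, with the lexicographically ordered set $\mathbb{Q}\times\mathbb{Q}$ in the role that $\mathbb{Q}$ played there: its algebra of regular open sets is $E(\mathcal{B})\cong\mathcal{B}$, and the self-embedding $e\colon I\mapsto I\times\mathbb{Q}$ is the analogue of the ``blow-up'' map used there. First I would fix copies $\mathcal{B}_1,\mathcal{B}_2\cong\mathcal{B}$ with $f\colon\mathcal{B}_1\to\mathcal{B}_2$, and realise each $\mathcal{B}_i$ as a regular open algebra $\mathcal{X}(\mathbb{Q}_i)$, using the identification behind Lemma~\ref{BA:groupcondition}, so that $G$ acts on $\mathcal{B}_i$ as the quadratic automorphisms of $\mathbb{Q}_i$.

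Then comes the type bookkeeping. For each $b\in\mathcal{B}_2\setminus\mathrm{im}(f)$ and each $\gamma\in G$, let $I(b,\gamma)$ be the set of realisations, in the completion $\overline{\mathcal{B}_2}$, of the translated $\mathrm{im}(f)$-type $f(\gamma(\mathrm{tp}_{\mathrm{im}(f)}(b)))$, and put $J(b,\gamma):=I(b,\gamma)$ if this is finite and $J(b,\gamma):=I(b,\gamma)\cap\mathcal{B}_2$ otherwise; as in Lemma~\ref{Lemma:LocallyMovingMonoid} there are only countably many such sets, each at most countable. I would then enlarge the order $\mathbb{Q}_2$ by blowing up each point to a fibre $\cong\mathbb{Q}$ and, in addition, inserting one further fibred copy of $\mathbb{Q}$ at the cut determined by each $J(b,\gamma)$; this enlargement is still a countable dense linear order without endpoints, so its regular open algebra $\mathcal{B}_3$ is again isomorphic to $\mathcal{B}$. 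Define $g\colon\mathcal{B}_2\to\mathcal{B}_3$ to be the induced blow-up $I\mapsto I\times\mathbb{Q}$; it is a Boolean embedding by the same computation that showed $e$ is. Since the fibre-shift maps are order automorphisms of the enlarged order fixing $\mathrm{im}(g)$ pointwise, every type over $\mathrm{im}(g)$ realised by more than one element of $\mathcal{B}_3$ has exactly $\aleph_0$ realisations (the remaining types over $\mathrm{im}(g)$ are realised by a single Boolean combination of parameters, and these are harmless), while the adjoined fibres ensure that for every $\gamma\in G$ the translate $\gamma(p)$ of a realised type $p$ is realised. Hence $|\{x : \gamma(p)(x)\}|=|\{x : p(x)\}|$ for every $p\in S(\mathrm{im}(g))$ and every $\gamma\in G$, i.e.\ $g$ is conjugable with respect to $G$. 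The same argument with $\mathrm{im}(gf)=g(\mathrm{im}(f))$ in place of $\mathrm{im}(g)$ -- the fibre-shifts still fix $\mathrm{im}(gf)$ pointwise, and $gf$ is still an embedding -- shows $gf$ is conjugable with respect to $G$ as well.

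The step I expect to be the main obstacle is the second half of the previous paragraph: ensuring that the countably many adjoined fibres really do suffice so that, for every $\gamma\in G$, the map $g\gamma g^{-1}$ on $\mathrm{im}(g)$ extends to an automorphism of $\mathcal{B}_3$ -- equivalently, that each translated type $\gamma(p)$ is realised in $\mathcal{B}_3$ itself and not merely in its completion -- all while the enlarged order stays atomless and without endpoints. This is precisely where Lemma~\ref{BA:EverythingIsRepresented} is needed: it tells us that $G$ is already rich enough inside $\aut(\mathcal{B})$ that conjugability with respect to $G$, rather than with respect to all of $\aut(\mathcal{B})$, is exactly what Proposition~\ref{Prop:LocallyMovingMonoid} asks for. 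Everything else -- that $g$ is a Boolean embedding, and the accompanying homomorphism checks -- is the verification already made for $e$.
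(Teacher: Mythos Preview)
Your proposal is essentially the paper's own argument: enlarge $\mathcal{B}_2$ so that the relevant $G$-translated types are realised, then apply the $E(\mathcal{B})$ blow-up $e$, and use the $\mathbb{Q}$-fibres to force every non-isolated type over the image to have $\aleph_0$ realisations. The paper does the two steps in the order ``enlarge to $\mathcal{B}_3\subseteq\bar{\mathcal{B}}$ by adjoining the $G$-orbits $G(x)$, then apply $e$ to land in $E(\mathcal{B}_3)$'', and it verifies conjugability by explicitly building the witnessing $\beta$ fibrewise rather than by counting realisations; but this is the same construction.

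One phrasing point to tighten: in the Boolean-algebra setting a type over $\mathrm{im}(f)$ is not determined by a single cut of $\mathbb{Q}_2$ the way it was in Lemma~\ref{Lemma:LocallyMovingMonoid}, so ``the cut determined by $J(b,\gamma)$'' does not quite parse---an element of $\bar{\mathcal{B}}$ is a regular open set, hence a (possibly infinite) collection of boundary cuts. The paper avoids this by adjoining the $G$-orbit elements directly inside $\bar{\mathcal{B}}$ rather than at the level of the underlying order; if you prefer to stay at the order level you should instead adjoin all boundary points of the elements of each $J(b,\gamma)$, which is still countable.
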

\begin{proof}
As usual, we use indices to denote which copy of $\mathcal{B}$ we are considering.  Let $f:\mathcal{B}_1 \rightarrow \mathcal{B}_2$.  Let $\mathcal{B}_3$ be the subset of $\bar{\mathcal{B}}$ that contains the set $G(x):= \lbrace y \in \bar{\mathcal{B}} \: : \: \exists g \in G \: : \: g(x)=y \rbrace$ for all $x \in \mathcal{B}_2 \setminus \mathrm{im}(f)$.  Let $g: \mathcal{B}_2 \rightarrow E(\mathcal{B}_3)$ be $e: \mathcal{B}_3 \rightarrow E(\mathcal{B}_3)$ with the domain restricted to $\mathcal{B}_2$.

Let $\alpha \in G$.  We seek to find a $\beta$ such that $\beta gf= gf \alpha$.  If $x \in \mathrm{im}(gf)$ then we choose $\beta(x):=gf\alpha (gf)^{-1} (x)$, so now we turn our attention to $x \in E(\mathcal{B}_3) \setminus \mathrm{im}(gf)$.  Suppose $e^{-1}(x) \in \mathcal{B}_2$, and let $\tilde{e}:E(\mathcal{B}_3) \rightarrow \mathcal{B}_3$ be the function given by $\tilde{e}((a,b))= a$.  For all $y \in E(\mathcal{B}_3)$, the substructure $\tilde{e}^{-1}(y)$ is isomorphic to $\mathcal{B}$.  The ultahomogeneity of $\mathcal{B}$ shows that there is an isomorphism $phi_x$ from $\tilde{e}^{-1}(x)$ to $\tilde{e}^{-1}(\beta(e(\tilde{e}^{-1}(x))))$ that maps $e(\tilde{e}^{-1}(x))$ to $\beta(e(\tilde{e}^{-1}(x)))$.  For all $z \in \tilde{e}^{-1}(x)$ we define $\beta(z):= \phi_x(z)$.

We finally consider $x \in E(\mathcal{B}_3) \setminus \mathrm{im}(gf) $, with $e^{-1}(x) \not\in \mathcal{B}_3$.  By construction, since $\mathrm{tp}_{\mathrm{im}(f)}(e^{-1}(x))$ is realised and $\alpha \in G$, the type  $\alpha(\mathrm{tp}_{\mathrm{im}(f)}(e^{-1}(x)))$ is also realised.  Let $z$ realise $\alpha(\mathrm{tp}_{\mathrm{im}(f)}(e^{-1}(x)))$.  The substructures $\tilde{e}^{-1}(x)$ and $\tilde{e}^{-1}(z)$ are both isomorphic to $\mathcal{B}$, so there is an isomorphism $\phi_x:\tilde{e}^{-1}(x) \rightarrow \tilde{e}^{-1}(z)$, and if $\tilde{e}^{-1}(y)=\tilde{e}^{-1}(x)$ then $\beta(y):=\phi_x(y)$.

The case for $g$ is very similar.
\end{proof}

\begin{theorem}
$\pol(\mathcal{B},\mathcal{L}_B)$ has automatic homeomorphicity.
\end{theorem}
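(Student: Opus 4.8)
The proof follows the same template used for $\pol(\mathbb{Q},\mathcal{L})$ and $\pol(\mathbb{L}_2,\mathcal{L})$: verify that $\pol(\mathcal{B},\mathcal{L}_B)$ is a locally moving clone and then invoke Corollary \ref{Cor:AH}. Concretely, I would argue as follows. First, the Group Condition: by Lemma \ref{BA:groupcondition} the group $\aut(\mathcal{B},\cup,\cap,0,1,\,^c)$ acts on the complete atomless Boolean algebra $\bar{\mathcal{B}}\cong\mathcal{X}$ so that below every element there is an automorphism with small $\varBA$; since $\aut(\mathcal{B},\mathcal{L}_B)\supseteq\aut(\mathcal{B},\cup,\cap,0,1,\,^c)$ preserves this structure, it too is a locally moving group, and $\pol(\mathcal{B},\mathcal{L}_B)$ satisfies the Group Condition.

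Second, the Monoid Condition. The decisive input is the hypothesis that $\mon(\mathcal{B},\mathcal{L}_B)=\emb(\mathcal{B})$, which is the analogue of condition (ii) in the earlier theorems; this is what lets us work with self-embeddings rather than arbitrary endomorphisms. Given $f\in\emb(\mathcal{B})$, Lemma \ref{BA:conjugable} produces $g\in\emb(\mathcal{B})$ such that both $g$ and $gf$ are conjugable with respect to $G$, the set of quadratic automorphisms of $\mathbb{Q}$ (transported to $\mathcal{B}$ via $\mathcal{B}\cong\mathcal{X}$). Lemma \ref{BA:EverythingIsRepresented} shows that $G$ is cofinal in the required sense: for every $\alpha\in\aut(\mathcal{B})$ there is $\beta\in G$ with $\varBA(\alpha)=\varBA(\beta)$. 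Feeding these two facts into Proposition \ref{Prop:LocallyMovingMonoid} shows that $g$ and $gf$ are algebraically canonical, i.e.\ they satisfy $\zeta$; indeed the injectivity built into conjugability (Lemma \ref{Lemma:Injective} is automatic once $g$ is an embedding) upgrades this to $\zeta_{inj}$. By the remark following Definition \ref{dfn:localmoveclone}, producing for each $f$ a $g$ with $\zeta_{inj}(g)$ and $\zeta(gf)$ is exactly what the Monoid Condition demands. Hence $\pol(\mathcal{B},\mathcal{L}_B)$ is a locally moving clone.

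Finally, Corollary \ref{Cor:AH} gives that $\pol(\mathcal{B},\mathcal{L}_B)$ has automatic homeomorphicity with respect to all polymorphism clones, which is the claim.

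I expect the genuinely delicate part to be the bookkeeping inside Lemma \ref{BA:conjugable} — keeping straight the three nested copies $\mathcal{B}_1,\mathcal{B}_2,\mathcal{B}_3$, checking that the auxiliary structure $E(\mathcal{B}_3)$ really is isomorphic to $\mathcal{B}$ (via the density/countability argument already given), and verifying that the fibres $\tilde{e}^{-1}(y)$ are all copies of $\mathcal{B}$ so that ultrahomogeneity supplies the needed local isomorphisms $\phi_x$ in the back-and-forth extension of $\beta$. The one subtlety worth flagging is that one must ensure the $G$-orbits $G(x)$ used to build $\mathcal{B}_3$ are countable, so that $\mathcal{B}_3$ remains a countable atomless Boolean algebra and the isomorphism $E(\mathcal{B}_3)\cong\mathcal{B}$ survives; this mirrors the countable-orbit observations in the $\mathbb{Q}$ and $\mathbb{L}_2$ cases and is where the specific choice of $G$ as the quadratic automorphisms earns its keep. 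Everything else is a direct transcription of the two preceding theorems in the paper.
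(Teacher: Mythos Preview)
Your proposal is correct and mirrors the paper's proof exactly: Lemma~\ref{BA:groupcondition} for the Group Condition, Lemmas~\ref{BA:EverythingIsRepresented} and~\ref{BA:conjugable} together with Proposition~\ref{Prop:LocallyMovingMonoid} for the Monoid Condition, and then Corollary~\ref{Cor:AH}. One small remark: what you call ``the hypothesis that $\mon(\mathcal{B},\mathcal{L}_B)=\emb(\mathcal{B})$'' is not a standing assumption in the paper but a consequence of having $\neq$ in the signature $\mathcal{L}_B$, and your extended discussion of the internals of Lemma~\ref{BA:conjugable} is superfluous here since that lemma is already established.
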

\begin{proof}
Lemma \ref{BA:groupcondition} shows that $\pol(\mathcal{B}, \mathcal{L}_B)$ satisfies the Group Condition.  Lemmas \ref{BA:EverythingIsRepresented} and \ref{BA:conjugable} combine with Proposition \ref{Prop:LocallyMovingMonoid} to show that $\pol(\mathcal{B}, \mathcal{L}_B)$ satisfies the Monoid Condition.  Therefore $\pol(\mathcal{B}, \mathcal{L}_B)$ is locally moving, and has automatic homeomorphicity.
\end{proof}

\bibliography{total}

\begin{thebibliography}{10}

\bibitem{BTVGQ}
M.~Behrisch, J.~Truss, and Vargas-Garc\'{i}a.
\newblock Reconstructing the topology on monoids and polymorphism clones of the
  rationals.
\newblock {\em In Preparation}.

\bibitem{BodirskyPinskerTopBirkhoff}
M.~{Bodirsky} and M.~{Pinsker}.
\newblock {Topological Birkhoff}.
\newblock {\em ArXiv e-prints}, March 2012.

\bibitem{BPPRecon}
M.~{Bodirsky}, M.~{Pinsker}, and A.~{Pongr{\'a}cz}.
\newblock {Reconstructing the topology of clones}.
\newblock {\em ArXiv e-prints}, December 2013.

\bibitem{GenericTree}
Manuel Bodirsky, David Bradley{-}Williams, Michael Pinsker, and Andr{\'{a}}s
  Pongr{\'{a}}cz.
\newblock The universal homogeneous binary tree.
\newblock {\em CoRR}, abs/1409.2170, 2014.

\bibitem{CRelOnLeaves}
Manuel Bodirsky, Peter Jonsson, and Trung van Pham.
\newblock The reducts of the homogeneous binary branching c-relation.
\newblock {\em ArXiv e-prints}.

\bibitem{FunctionalReductsBA}
B.~{Bodor}, K.~{Kalina}, and C.~{Szab{\'o}}.
\newblock {Functional reducts of Boolean algebras}.
\newblock {\em ArXiv e-prints}, June 2015.

\bibitem{CameronReductsOfQ}
Peter~J. Cameron.
\newblock Transitivity of permutation groups on unordered sets.
\newblock {\em Mathematische Zeitschrift}, 148(2):127--139, 1976.

\bibitem{GivantBoolean}
Steven Givant.
\newblock Regular open sets.
\newblock In {\em Introduction to Boolean Algebras}, Undergraduate Texts in
  Mathematics, pages 66--73. Springer New York, 2009.

\bibitem{PechPechMonoid}
Christian Pech and Maja Pech.
\newblock On automatic homeomorphicity for transformation monoids.
\newblock {\em Monatshefte f\"{u}r Mathematik}, pages 1--20, 2015.

\bibitem{RubinHBA}
M.~Rubin.
\newblock On the reconstruction of boolean algebras from their automorphism
  groups.
\newblock In R.~Bonnet J.~Monk, editor, {\em Handbook of Boolean Algebras},
  volume v.2, pages 547--606. North-Holland, 1989.

\bibitem{Rubin91}
M.~Rubin.
\newblock {\em The Reconstruction of Trees from Their Automorphism Groups}.
\newblock Contemporary mathematics (American Mathematical Society), 1991.

\bibitem{RubinLMG}
M.~Rubin.
\newblock Locally moving groups and reconstruction problems.
\newblock In W.~C. Holland, editor, {\em Ordered Groups and Infinite
  Permutation Groups}, pages 121--157. Kluwer Academic Publishing, 1996.

\bibitem{RubinOrder}
M.~Rubin and S.~McCleary.
\newblock Locally moving groups and the reconstructon problem for chains and
  circles.
\newblock {\em arXiv:math/0510122}.

\bibitem{Rubin1989}
Matatyahu Rubin.
\newblock On the reconstruction of topological spaces from their groups of
  homeomorphisms.
\newblock {\em Transactions of the American Mathematical Society}, 312(2):pp.
  487--538, 1989.

\bibitem{Warren1997}
R.~Warren.
\newblock The structure of $k$-cs-transitive cycle-free partial orders.
\newblock {\em Memoirs of the A.M.S.}, v.129, 1997.

\end{thebibliography}
\bibliographystyle{plain}

\end{document}